\documentclass[12pt,a4paper,reqno]{amsart}
\usepackage{preemble, relsize, enumerate}

\date{\today}

\author{Stephen Cantrell}
\address{Department of Mathematics, 
University of Warwick,
Coventry, CV4 7AL, UK}
\email{stephen.cantrell@warwick.ac.uk}

\author{Mark Pollicott}
\address{Department of Mathematics, 
University of Warwick,
Coventry, CV4 7AL, UK}
\email{masdbl@warwick.ac.uk}

\date{\today. 
\\
2020 \textit{Mathematics Subject Classification. Primary 20F67; Secondary 37D35, 37D40} 
\\
\textit{Key words and phrases. Hyperbolic groups, Random walks, Central limit theorem}.
}

\title{Central limit theorems for Green metrics on hyperbolic groups}

\begin{document}
\maketitle

\begin{abstract}
Suppose we have two finitely supported, admissible, probability measures on a hyperbolic group $\G$. In this article we prove that the corresponding two Green metrics satisfy a counting central limit theorem when we order the elements of $\G$ according to one of the metrics. Our results also apply to various other metrics including length functions associated to Anosov representations and to group actions on hyperbolic metric spaces.
\end{abstract}

\section{Introduction}
Suppose that $\Gamma$ is a non-elementary hyperbolic group (for example, a free group, the fundamental group of negatively curved, closed Riemannian manifold, etc). Recently there has been much interest in understanding the statistical behaviour of various natural real valued functions on $\G$.  There has been particular attention given to limit theorems in which group elements in $\G$ are ordered by their word length with respect to a finite generating set.
More precisely, suppose that $\Psi: \G \to \R$ is a real valued function on $\G$ that counts or represents a quantity of interest (for example a quasi-morphism or the displacement associated to the action of $\G$ on a metric space) and that we have fixed a finite generating set $S$ for $\Gamma$. Write $S_n = \{ g\in\Gamma: |x|_S = n \}$ for the collection of group elements of $S$ word length $n$. Multiple authors have studied central limit theorems for the sequence of  counting distributions
\[
\frac{1}{\#S_n} \#\left\{ g \in S_n : \frac{\Psi(g)- \Lambda n}{\sqrt{n}} \le t \right\} \ \text{ where $t\in \R$, $n\ge 1$.}
\]
See for example \cite{horsham.sharp}, \cite{rivin}, \cite{CalegariFujiwara2010}, \cite{chas.lalley}, \cite{calegari.walker}, \cite{calegari.maher},  \cite{GTT2},  \cite{GTT}, \cite{cantrell.sert},  \cite{cantrell}  amongst many other works. In all of these works the authors exploit the automatic structure of hyperbolic groups: the fact that  hyperbolic groups equipped with a generating set can in some sense be encoded by a finite directed graph. Using this automatic structure the sequence of uniform counting measures on $S_n$ can be modelled by the stationary measure for a Markov chain (or measure of maximal entropy for a subshift of finite type). Using this correspondence, it is possible to use tools from probability theory to study counting problems for word metrics.

This discussion leads us to ask whether similar central limit theorems hold when we order group elements by a different metric (other than a word metric). For example, is it possible to prove a central limit theorem similar to the above but with the word metric replaced with the displacement function associated to the action of $\G$ on a hyperbolic metric space or with a Green metric associated to a random walk? 
Not much appears to be known about this problem in general. This is likely because, unlike for word metrics, there is not a well understood probabilistic model that emulates counting over spheres for a general  metric. Consequently, it is often the case that the asymptotic properties of a particular quantity is known when counting with respect to word metrics but not when counting over other metrics. This is the case for stable commutator length for example. Indeed, the large deviation properties of stable commutator length are well-understood when group elements are ordered by a word length. This is due to  Calegari-Walker \cite{calegari.walker}  and Calegari-Maher \cite{calegari.maher}. However the analogous results when ordering by other metrics are unknown, see Conjecture A.11 in \cite{calegari.walker}. 
We discuss these points further in Section \ref{sec.method}.


In specific settings some statistical results with counting over non word metrics are known: \cite{cantrell.pollicott}, \cite{schwartz.sharp}, \cite{lalley}, \cite{babillot.ledrappier}. In these works tools from thermodynamic formalism were used and in particular, delicate Dolgopyat type estimates for norms of certain  transfer operators were exploited. These estimates are known to hold in only a few particular situations.


The aim of this work is to obtain general counting central limit theorems where we count over metrics that are not word metrics and for which we do not have access to Dolgopyat type estimates for corresponding transfer operators. 
We prove a central limit theorem that compares pairs of \textit{strongly hyperbolic} metrics on $\G$. These are metrics that are `stable at infinity': their Busseman functions are H\"older continuous cocycles (see Section $2$ for the precise definition). 
Before stating our results we introduce notation and recall some preliminary facts.
Let $\mathcal{N}_\sigma : \R \to \R$ denote the normal distribution with mean $0$ and variance $\sigma^2$:
\[
\mathcal{N}_\sigma(t) = \frac{1}{\sqrt{2\pi}\sigma} \int_{-\infty}^t e^{-u^2/2\sigma^2} \ du.
\]
 Two metrics $d,d_\ast$ on $\G$ are called \emph{roughly similar} if there is $\Lambda, C >0$ such that $|d(g,h) - \Lambda d_\ast(g,h)| \le C$ for all $x,y \in \G$.  The translation length function of $d$ is defined as the limit
\[
\ell_d[g] = \lim_{n\to\infty} \frac{d(o,g^n)}{n}
\]
and is defined on conjugacy classes $[g]$ in $\G$. Here we use $o\in\G$ to denote the identity element. We call $\G$ \textit{non-elementary} if it does not contain a finite index cyclic subgroup and say that $\G$ is \textit{virtually free} if it 
contains a free group as a finite index subgroup. We say that $d$ has \emph{non-arithmetic length spectrum} if $\ell_d$ does not take values lying in a lattice $a \mathbb{Z}$ for some $a \in\R$. When $\G$ is not virtually free, every strongly hyperbolic on $\G$ has non-arithmetic length spectrum \cite{GMM2018} (also see \cite[Theorem 1.4]{cantrell}).
We let 
\[
\tau(d_\ast/d) := \lim_{T\to\infty} \frac{1}{\#\{g\in\G: d(o,g) <T\}} \sum_{d(o,g) < T} \frac{d_\ast(o,g)}{T} > 0
\]
denote the \textit{mean distortion} of $d_\ast$ with respect to $d$ which exists by \cite[Theorem 1]{cantrell.tanaka.1}.

\begin{theorem}\label{thm.var}
Let $\G$ be a non-elementary hyperbolic group and take two strongly hyperbolic metrics $d,d_\ast \in \Dc_\G$. Suppose either that $\G$ is not virtually free or that $\G$ is virtually free and that $d$ has non-arithmetic length spectrum. 
Let $\tau = \tau(d_\ast/d)$ be the mean distortion.
Then the following limit exists
\[
\sigma^2(d_\ast/d) := \lim_{T\to\infty} \frac{1}{\#\{g \in \G: d(o,g) < T\}} \sum_{d(o,g) < T} \frac{(d_\ast(o,g) - \tau T)^2}{T} \ge 0.
\]
Furthermore the following are equivalent:
\begin{enumerate}
\item $\sigma^2(d_\ast/d) = 0$; and,
\item $d$ and $d_\ast$ are roughly similar.
\end{enumerate}
\end{theorem}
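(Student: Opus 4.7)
My plan is to reduce the statement to a central limit theorem for H\"older observables on a subshift of finite type, via the automatic structure of $\Gamma$. First I would fix a finite symmetric generating set and invoke a Cannon-type coding to obtain a one-sided subshift of finite type $(\Sigma,\sigma)$ together with a coding map $\pi$ that identifies cylinders of length $n$ with (most of) the word sphere $S_n$. This is the framework underlying the papers cited in the introduction. Strong hyperbolicity of $d$ and $d_*$ gives H\"older continuous Busemann cocycles on the Gromov boundary of $\Gamma$, and pulling these back through $\pi$ produces H\"older potentials $r, r_* : \Sigma \to \mathbb{R}_{>0}$ satisfying
\[
d(o,\pi(x|_n)) = S_n r(x) + O(1), \qquad d_*(o,\pi(x|_n)) = S_n r_*(x) + O(1),
\]
uniformly in $x$ and $n$, where $S_n$ denotes the Birkhoff sum along $\sigma$. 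Counting group elements with $d(o,g) < T$ then becomes a renewal / suspension-flow problem over $(\Sigma,\sigma)$ with roof $r$, while the observable $\widetilde{f} := r_* - \tau r$ records the deviation of $d_*$ from the average $\tau \cdot d$.

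To prove existence of $\sigma^2(d_*/d)$, I would combine the renewal theorem with a central limit theorem for H\"older potentials over the Gibbs equilibrium state $\mu$ associated with the critical exponent of $d$, in the style of Parry-Pollicott and Lalley. The definition of $\tau$ from \cite{cantrell.tanaka.1} is precisely the condition $\int \widetilde{f}\,d\mu = 0$, and the standard Green-Kubo expression
\[
\sigma^2(d_*/d) = \frac{1}{\int r\,d\mu}\Big(\int \widetilde{f}^2\,d\mu + 2\sum_{n\ge 1}\int \widetilde{f}\cdot(\widetilde{f}\circ \sigma^n)\,d\mu\Big) \ge 0
\]
then yields the limit. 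The non-arithmetic length spectrum hypothesis (automatic when $\Gamma$ is not virtually free by \cite{GMM2018}) is used to ensure that the renewal theorem applies to the suspension flow built over $r$.

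Finally, for the equivalence, the direction $(2) \Rightarrow (1)$ is a straightforward computation: rough similarity forces $\tau = \Lambda$, and exponential growth of $d$-shells concentrates $d(o,g)$ within $O(1)$ of $T$, so that $\sum_{d(o,g) < T} (d_*(o,g) - \tau T)^2 = O(\#\{g: d(o,g) < T\})$, which vanishes after dividing by $T$. For $(1) \Rightarrow (2)$, vanishing of the Green-Kubo expression forces $\widetilde{f}$ to be a H\"older coboundary on $\Sigma$ by the standard Liv\v{s}ic dichotomy: $\widetilde{f} = u \circ \sigma - u$ for some H\"older $u$. Telescoping this identity against the cocycle approximations produces $d_*(o,g) - \tau d(o,g) = O(1)$ uniformly on the image of $\pi$. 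The step I expect to be the main obstacle is propagating this bound from coded elements to all of $\Gamma$: since $\pi$ is neither injective nor surjective, one must use the hyperbolicity of $\Gamma$ (thin triangles, bounded distortion of Busemann cocycles, and quasi-geodesic shadowing) to bridge the remaining elements and conclude that $d$ and $d_*$ are genuinely roughly similar on the entire group, rather than only along coded trajectories.
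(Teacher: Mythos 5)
Your outline is in the right spirit—coding by a Cannon automaton, encoding $d$ and $d_\ast$ by H\"older potentials, and detecting rough similarity through a Liv\v{s}ic-type dichotomy all appear in the paper—but the approach you propose (renewal theorem plus a Gibbs-measure CLT) runs into the two obstructions the paper is built around, and your proposal does not address either of them.

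First, the Cannon coding is not connected: the subshift splits into several components, and several of them can be word-maximal, each carrying its own pressure, equilibrium state, and Green--Kubo variance. A ``CLT for H\"older potentials over the Gibbs equilibrium state $\mu$'' presupposes a single, well-defined $\mu$, but a priori the different word-maximal components could give different means and variances, and the uniform counting measures on $\{g : d(o,g) < T\}$ would then fail to converge at all. The crux of the paper is Lemma \ref{lem.pressagree}: using growth quasi-tightness (Arzhantseva--Lysenok) to compare loop sums over different components with a global sum over the sphere, one shows that the pressures $\Pr_j(-s\Psi_d - t\Phi)$ agree for real $(s,t)$, and then bootstraps to complex $(s,t)$ by analyticity. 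Nothing in your sketch plays this role. Your stated ``main obstacle''---that $\pi$ is neither injective nor surjective---is not actually the problem here, because $\ev$ restricted to paths starting at $\ast$ is a genuine bijection with $\Gamma$ by definition of a strongly Markov structure; the real issue is the plurality of components in the shift.

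Second, even granting a single component, the renewal-plus-CLT strategy for a \emph{counting} CLT (in the style of Lalley or Pollicott--Sharp) needs uniform control of the transfer operator $L_{s,t}$ for $s=1+il$ with $l$ unbounded, i.e.\ Dolgopyat-type bounds. The paper explicitly notes that such bounds are not available for general strongly hyperbolic metrics, and replaces them with the method of moments: a two-variable Poincar\'e series $\eta(s,t)$, Proposition \ref{prop.analytic} on the pole structure of $\eta_p(s) = \partial_t^p\eta(s,0)$, and Delange's Tauberian theorem to extract the asymptotics $\pi_p(T) \sim C\sigma^p(p-1)!!\, e^T T^{p/2}$ for even $p$. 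The only spectral input beyond the pressure agreement is Proposition \ref{prop.srless}, which rules out $1$ as an eigenvalue of $L_{j,1+il,t}$ for $l\ne 0$ and $|t|$ small, using non-arithmeticity; this is much weaker than Dolgopyat and is all that a Tauberian argument needs. For Theorem \ref{thm.var} specifically the paper takes $p=2$, replaces $d_\ast(o,g)-d(o,g)$ by $d_\ast(o,g)-T$ via the exponential decay of $\#\{g : d(o,g) \le T - T^{1/3}\}/\#\{g : d(o,g) < T\}$, and identifies $\sigma^2 = -\lambda_{tt}(1,0)/\lambda_s(1,0) = \theta''_{d_\ast/d}(0)$ via the implicit function theorem. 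The degeneracy characterization in your sketch (Liv\v{s}ic, coboundary, telescope) is essentially what happens inside Lemma \ref{lem.deriv}, together with the rigidity theorem from \cite{cantrell.tanaka.1} that ties equality of marked length spectra to rough similarity, so that part of your plan is sound in outline. The gaps are the pressure-agreement lemma and the substitution of the method of moments for the unavailable Dolgopyat estimates.
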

\begin{remark}
We identify $\sigma^2(d_\ast/d)$ with the second derivative at $0$ of the \textit{Manhattan curve} for $d_\ast, d$. See Section \ref{sec.rigid}.
\end{remark}
The constant $\sigma^2(d_\ast/d)$ then appears as the variance in the following central limit theorem.
\begin{theorem}\label{thm.main}
Suppose that $\G$ is a non-elementary hyperbolic group and take two strongly hyperbolic metrics $d,d_\ast \in \Dc_\G$. Suppose either that $\G$ is not virtually free or that $\G$ is virtually free and that $d$ has non-arithmetic length spectrum. Then there exist $\tau = \tau(d_\ast/d) >0$ and $\sigma^2 = \sigma^2(d_\ast/d) \ge 0$ such that
\[
\frac{1}{\#\{ g \in \G: d(o,g) < T\}} \#\left\{ g \in \G: d(o,g) < T \ \text{ and } \ \frac{d_\ast(o,g) - \tau T}{\sqrt{T}} \le t  \right\} \to \mathcal{N}_\sigma(t)
\]
as $T\to\infty$. 
Furthermore, $\sigma^2(d_\ast/d) = 0$ if and only if $d$ and $d_\ast$ are roughly similar.
\end{theorem}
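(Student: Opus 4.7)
The plan is to deduce the central limit theorem from Theorem~\ref{thm.var} together with a characteristic-function argument driven by the Manhattan curve of the pair $(d, d_{\ast})$. The first step is to introduce the two-parameter Poincar\'e series
\[
\eta(a,b) = \sum_{g \in \Gamma} e^{-a\, d(o,g) - b\, d_{\ast}(o,g)}
\]
and its Manhattan curve $\theta$, i.e.\ the function $b = \theta(a)$ tracing the boundary of the region of convergence. Under the standing hypotheses $\theta$ is real-analytic and strictly convex at the critical exponent $\delta = \delta(d)$; the non-arithmetic/non-virtually-free assumption is precisely what prevents $\theta$ from being affine along a non-trivial segment, and hence what guarantees a genuinely Gaussian distribution for $d_{\ast}$ relative to $d$. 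Standard identifications along the curve yield $-\theta'(\delta) = \tau$ and relate $\theta''(\delta)$ positively to the variance $\sigma^{2}$ supplied by Theorem~\ref{thm.var}.

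Next I would reduce the theorem to a characteristic-function limit. Setting $N(T) = \#\{g \in \Gamma : d(o,g) < T\}$ and
\[
\Phi_T(s) = \frac{1}{N(T)} \sum_{d(o,g) < T} \exp\!\left( \frac{i s\, (d_{\ast}(o,g) - \tau T)}{\sqrt{T}} \right),
\]
L\'evy's continuity theorem reduces the claim to proving $\Phi_T(s) \to e^{-\sigma^{2} s^{2}/2}$ pointwise in $s \in \mathbb{R}$. The numerator of $\Phi_T$ is a $d$-orbital count twisted by a $d_{\ast}$-phase, and can be analysed through the perturbed series $\eta(a, \delta_{\ast} + i s / \sqrt{T})$, where $\delta_{\ast}$ is the point on $\theta$ sitting above $\delta$. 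The Taylor expansion
\[
\theta(\delta + z) = -\tau\, z + \tfrac{1}{2}\, \theta''(\delta)\, z^{2} + O(z^{3})
\]
transfers the quadratic coefficient of $\theta$ into the Gaussian factor $e^{-\sigma^{2} s^{2}/2}$ once the centering $\tau T$ is cancelled, while the linear term in $z$ is killed by the centering itself.

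The main obstacle is carrying out the inversion from $\eta$ to the weighted counting function without Dolgopyat-type estimates on twisted transfer operators, which the paper explicitly states are unavailable in this setting. I would circumvent this by leaning on the orbital counting asymptotic of \cite{cantrell.tanaka.1} for $d$-balls and its weighted extension underlying the mean distortion statement, treating the phase $i s / \sqrt{T}$ as a small analytic perturbation of the unweighted count. Crucially, for a convergence-in-distribution statement one only needs leading-order asymptotics rather than error terms, so the spectral gap provided by the simplicity of the maximal eigenvalue of the relevant transfer operator, together with real-analyticity of the pressure function at the critical locus, should suffice. The strict convexity of $\theta$ (from the non-arithmetic/non-virtually-free hypothesis) is what keeps $\theta''(\delta) > 0$ unless $d, d_{\ast}$ are roughly similar, so the rigidity statement $\sigma^{2} = 0 \iff d, d_{\ast}$ roughly similar is inherited directly from Theorem~\ref{thm.var}.
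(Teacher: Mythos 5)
The proposal replaces the paper's method of moments by a L\'evy-continuity/characteristic-function strategy, which is a genuinely different route --- but as sketched it runs into exactly the wall the paper was designed to circumvent, and the circumvention you propose does not address it.

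The fundamental obstruction is in the inversion step. To prove $\Phi_T(s)\to e^{-\sigma^2 s^2/2}$ you must extract asymptotics as $T\to\infty$ of the twisted count $\sum_{d(o,g)<T}e^{i s\,d_\ast(o,g)/\sqrt T}$, a complex-valued, highly non-monotone quantity with a $T$-dependent twist. Passing from the Poincar\'e series to this count is a Perron/Wiener--Ikehara style inversion, and for non-monotone integrands that step requires quantitative bounds on $\eta(w,u)$ for $w$ running along a vertical line --- i.e.\ precisely the Dolgopyat-type transfer operator estimates the paper explicitly says are unavailable in this generality. ``Leading-order asymptotics suffice for convergence in distribution'' is not a valid substitute here: the moving singularity $w_\ast(s/\sqrt T)$ introduces an oscillatory prefactor $e^{i\tau s\sqrt T}$ that must cancel the centering exactly, and without uniform control on the vertical line you cannot justify picking off only the dominant pole. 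The paper's method of moments sidesteps this entirely: each $\eta_p(s)=\partial_t^p\eta(s,t)|_{t=0}$ is obtained by differentiating at $t=0$ (a real perturbation), and the resulting counting functions are monotone (or, for odd $p$, cleverly combined into monotone expressions via $G_1,G_2,G_3$ and an integration by parts), so Delange's Tauberian theorem --- which needs only analyticity on $\mathrm{Re}(s)\geq 1$ away from $s=1$, no growth bound --- applies. That monotonicity is the whole point of the moment approach and is lost the moment you pass to complex characteristic exponents.

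Two further issues. First, you misattribute the role of the non-arithmetic/non-virtually-free hypothesis: strict convexity of the Manhattan curve (equivalently $\theta''>0$) already follows from $d,d_\ast$ not being roughly similar, independently of arithmeticity (Theorem \ref{thm.equiv}). The hypothesis is actually used for spectral control on the vertical line $\mathrm{Re}(s)=1$ --- ruling out additional unit eigenvalues of the transfer operators $L_{j,s,t}$ away from $s=1$ (Proposition \ref{prop.srless}) --- which is what lets the Tauberian argument run. Second, your sketch implicitly assumes a connected Cannon coding: it makes no mention of the possibility of multiple word-maximal components, whose pressures must be shown to coincide under complex perturbation (Lemma \ref{lem.pressagree}). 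That coincidence is the technical heart of the argument; without it, the singular part of $\eta$ at $s=1$ would not have the clean $(1-\lambda(s,t))^{-1}$ form on which everything else rests.
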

\noindent We stress that this result is new even in the case when $\G$ is a free group or surface group. 

In the rest of this introduction we present some applications of this theorem and then discuss our method of proof. Further applications and results are included in the final section.

\subsection{Random walks}
Suppose that $\mu$ is a finitely supported probability measure, with symmetric support (i.e. $\mu(g) = \mu(g^{-1})$ for all $g \in\G$), on $\G$ such that the support of $\mu$ generates $\G$ as a semi-group. Let $G(\cdot, \cdot)$ denote the corresponding \emph{Green function} defined by 
\[
G(g,h)  = \sum_{g \in\G} \mu^{\ast n} (g^{-1}h) \ \text{ for $g,h \in \G$}
\]
where $\mu^{\ast n}$ denotes the $n$th convolution of $\mu$.
The \emph{Green metric} is defined as
\[
d_\mu(g,h) = - \log \left(\frac{G(g,h)}{G(o,o)}\right)
\]
and intuitively $d_\mu(g,h)$ represents the (minus logarithm of the) probability that a random walk starting at $g$ reaches $h$. Then $d_\mu$ is a strongly hyperbolic metric in $\Dc_\G$ \cite{Nica.Spakula} with $v_{d_\mu} =1$. Hence we obtain the following result.

\begin{theorem} [CLT for Green metrics]
Suppose that $\G$ is a non-elementary hyperbolic group and take two Green metrics $d, d_\ast$ on $\G$ associated to finite, admissible probability measures $\mu, \mu_\ast$ both with symmetric support. Suppose either that $\G$ is not virtually free or that $\G$ is virtually free and that $d$ has non-arithmetic length spectrum. Then there exists $\tau = \tau(d_\ast/d) \ge 1$ and $\sigma^2 = \sigma^2(d_\ast/d) \ge 0$ such that
\[
\frac{1}{\#\{ g \in \G: d(o,g) < T\}} \#\left\{ g \in \G: d(o,g) < T \ \text{ and } \ \frac{d_\ast(o,g) - \tau T}{\sqrt{T}} \le t  \right\} \to \mathcal{N}_\sigma(t)
\]
as $T\to\infty$. Furthermore the following are equivalent
\begin{enumerate}
\item $d$ and $d_\ast$ are roughly similar;
\item $\sigma^2  = 0$; and,
\item $\tau = 1$.
\end{enumerate}
\end{theorem}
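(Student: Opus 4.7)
The plan is to deduce the statement from Theorem \ref{thm.main} and then upgrade the equivalence (1) $\Leftrightarrow$ (2) provided there to the three-way equivalence (1) $\Leftrightarrow$ (2) $\Leftrightarrow$ (3), exploiting the fact that both Green metrics are normalized to have volume entropy $1$.

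First I would invoke the Nica--\v{S}pakula result \cite{Nica.Spakula} recalled just above the statement to confirm that any Green metric $d_\mu$ associated to a finitely supported, admissible, symmetric probability measure lies in $\Dc_\G$, is strongly hyperbolic, and has $v_{d_\mu} = 1$. The hypotheses of Theorem \ref{thm.main} are therefore satisfied for the pair $(d,d_\ast)$, and that theorem produces constants $\tau = \tau(d_\ast/d)$ and $\sigma^2 = \sigma^2(d_\ast/d) \ge 0$, the stated convergence to $\mathcal{N}_\sigma$, and the equivalence (1) $\Leftrightarrow$ (2).

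Next I would establish (1) $\Leftrightarrow$ (3). The direction (1) $\Rightarrow$ (3) is short: if $|d(o,g) - \Lambda d_\ast(o,g)| \le C$ for all $g$, comparing exponential growth rates of $d$-balls and $d_\ast$-balls forces $\Lambda = v_{d_\ast}/v_d = 1$, so $d_\ast(o,g) = d(o,g) + O(1)$ and hence $\tau = 1$ directly from the definition of the mean distortion. For the converse I would appeal to the Manhattan-curve interpretation of $\tau$ and $\sigma^2$ flagged in the remark after Theorem \ref{thm.var}. Because $v_d = v_{d_\ast} = 1$, the Manhattan curve for $(d,d_\ast)$ contains both $(1,0)$ and $(0,1)$ and is convex. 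A short convex-analysis computation then gives $\tau \ge 1$, with equality if and only if the curve agrees with the straight segment joining these two boundary points; but a linear Manhattan curve has vanishing second derivative, so this forces $\sigma^2 = 0$, and the already-established equivalence (1) $\Leftrightarrow$ (2) then yields rough similarity. This closes the triangle.

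The main technical obstacle is the rigidity input at the last step: a convex Manhattan curve which touches its secant at two points must coincide with that secant, with no intermediate strictly convex behaviour. This is a genuine analytic statement about the curve and should follow from the thermodynamic formalism developed in Section \ref{sec.rigid}, together with strict convexity of the Manhattan curve away from the rough-similarity locus. Once that ingredient is available the three equivalences close up and the theorem follows.
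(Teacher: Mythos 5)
Your reduction to Theorem~\ref{thm.main} and the appeal to Nica--\v{S}pakula are exactly the paper's intended route, and both the CLT statement and the equivalence $(1) \Leftrightarrow (2)$ come out as you say. Where you diverge from the paper is in establishing $(1) \Leftrightarrow (3)$ and the bound $\tau \ge 1$: the paper simply reads these off from Theorem~\ref{thm.equiv} (Theorem~1 of \cite{cantrell.tanaka.1}), which already asserts, for \emph{any} pair $d,d_\ast \in \Dc_\G$, that $\tau(d_\ast/d) \ge v_d/v_{d_\ast}$ and that rough similarity is equivalent to $\tau(d_\ast/d) = v_d/v_{d_\ast}$. Since $v_{d_\mu} = 1$ for Green metrics, this specializes instantly to $\tau \ge 1$ and $(1) \Leftrightarrow (3)$ with no further work. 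Your convexity route (tangent line at $0$ has slope $-\tau$, curve passes through $(0,1)$ and $(1,0)$, support-line plus chord inequalities squeeze the curve to be linear on $[0,1]$, then $\theta''(0) = 0$ gives $\sigma^2 = 0$ via Theorem~\ref{thm.var2}) is correct but effectively re-proves the relevant piece of Theorem~\ref{thm.equiv} and is longer than necessary. The ``main technical obstacle'' you flag --- that a convex curve touching a \emph{support} line at two points must coincide with that line between them --- is indeed true, but it is not a gap you need to fill: it is already internalized in the cited rigidity theorem. (Also, your phrasing ``touches its secant at two points'' is slightly off; every convex function meets its secant at the two endpoints. You mean the tangent line at $s=0$, which is a support line, and it is the slope condition $\theta'(0) = -1$ that forces agreement on $[0,1]$.) So: correct, but the direct citation is cleaner and is what the paper relies on.
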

In fact, it is even possible to weaken the assumption that $\mu, \mu_\ast$ have finite support to having super-exponential moment by a result of Gou\"ezel \cite{GouezelAncona}. Furthermore, in the case when $\G$ is a free group or surface group we can also remove the assumption that $\mu, \mu_\ast$ are symmetric \cite{GouezelLocalLimit}.

\begin{remark}
Our theorem provides an equivalent formulation of the Singularity Conjecture.
Suppose that $\G$ is a surface group and that $\mu$ is a finitely supported admissible probability measure on $\G$. Let $d_\mu$ be the corresponding (not necessarily symmetric) Green metric and write $d$ for (the lift of) a hyperbolic metric on $\G$ coming from the action of $\G$ on $\mathbb{H}$. The  Singularity Conjecture predicts that for any $\mu$ as above, the corresponding hitting measure
on $\partial \G = S^1$ is mutually singular with respect to the Lebesgue measure.  The conjecture is equivalent to the assertion that for $\mu$ as above, $d_\mu$ and $d$ are never roughly similar \cite{GMM2018}. Despite recent progress \cite{lessa.1}, \cite{lessa.2}, \cite{kosenko}, \cite{kosenko.tiozzo}, the problem remains open. Theorem \ref{thm.main}  provides the following equivalent formulation of the singularity conjecture: the metrics $d_\mu$ and $d$ satisfy a non-degenerate central limit theorem (i.e. $\sigma^2(d_\mu/d) > 0$) as in Theorem \ref{thm.main} if and only if the hitting measure of $\mu$ is mutually singular with respect to the Lebesgue measure. This result also generalises to higher dimensions.
\end{remark}

\subsection{Anosov representations}
Recall that a representation $\rho: \G \to \SL_d(\R)$ (of a finitely generated group $\G$) is projective Anosov if the first and second singular values of $\rho$ separate exponentially as word length increases, i.e. given a finite generating set $S$ on $\G$ there exists $C, \Lambda >0$ such that
\[
\frac{\sigma_1(\rho(g))}{\sigma_2(\rho(g))} \ge C e^{\Lambda |g|_S}
\]
for all $g\in\G$. Here given $A \in \SL_d(\R)$, $\sigma_1(A) \ge \sigma_2(A) \ge \ldots \sigma_d(A)$ are the singular values of $A$. 
This is not the original definition of projective Anosov introduced by Labourie \cite{Labourie} (for surface groups) and extended (to all groups) by Guichard and Wienhard \cite{GuichardWeinhard} however it is easy to state and is known to be equivalent by the work \cite{BochiPotrieSambarino}.  Furthermore, groups admitting a projective Anosov representation are necessarily hyperbolic \cite{BochiPotrieSambarino}.
 The Hilbert length functional associated to $\rho: \G \to \SL_d(\R)$ is given by
\[
\alpha(g,h) := \log\sigma_1(\rho(g^{-1}h)) - \log\sigma_d(\rho(g^{-1}h)) \ \text{ for $g,h \in \G$.  }
\]
These are examples of strongly hyperbolic metrics (this follows from \cite[Lemma 7.1]{cantrell.tanaka.2}). We therefore deduce the following.
\begin{theorem}[CLT for Anosov representations]
Suppose that $\G$ is non-elementary, is not virtually free and that $\alpha, \alpha_\ast: \G \to \R$ are Hilbert length functions associated to two projective Anosov representations $\rho : \G \to \SL_n(\R), \rho_\ast : \G \to \SL_m(\R)$ as above. Then there exists $\tau = \tau(\rho_\ast/\rho) >0$ and $\sigma^2 = \sigma^2(\rho_\ast/\rho) \ge 0$ such that
\[
\frac{1}{\#\{ g\in\G: \alpha(o,g) < T\}} \#\left\{ g\in\G: \alpha(o,g) < T \ \text{ and } \ \frac{\alpha_\ast(o,g) - \tau T}{\sqrt{T}} \le t  \right\} \to \mathcal{N}_\sigma(t)
\]
as $T\to\infty$. 
\end{theorem}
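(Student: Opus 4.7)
The strategy is to deduce this theorem as a direct corollary of Theorem \ref{thm.main}, so the plan reduces to verifying that the Hilbert length functionals $\alpha, \alpha_\ast$ satisfy all of the hypotheses required there. First I would record that $\alpha(g,h) = \log\sigma_1(\rho(g^{-1}h)) - \log\sigma_d(\rho(g^{-1}h))$ defines a genuine left-invariant (pseudo-)metric on $\G$: symmetry follows from the identity $\sigma_i(A^{-1}) = \sigma_{d+1-i}(A)^{-1}$ valid for $A \in \SL_d(\R)$, left-invariance is immediate from the definition, and the triangle inequality follows from submultiplicativity of singular values under matrix products. Positivity on $g \ne o$ (away from a bounded defect) uses the exponential gap hypothesis of projective Anosov combined with hyperbolicity of $\G$.

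The key structural step is to verify that $\alpha, \alpha_\ast \in \Dc_\G$, i.e.\ that each is a \emph{strongly hyperbolic} metric. For the Hilbert length functional of a projective Anosov representation this is exactly the content of \cite[Lemma 7.1]{cantrell.tanaka.2}: the associated Busemann cocycles are H\"older continuous on the Gromov boundary, which is the defining property of a strongly hyperbolic metric. This is the only non-trivial external input to the deduction, and it is the step I would flag as the main obstacle if one were to aim for a self-contained treatment rather than citing \cite{cantrell.tanaka.2}.

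Once strong hyperbolicity is in hand, the remaining hypothesis of Theorem \ref{thm.main} concerning non-arithmeticity of the length spectrum comes for free. Since $\G$ is assumed non-elementary and not virtually free, the result recalled in the introduction (following \cite{GMM2018}, see also \cite[Theorem 1.4]{cantrell}) guarantees that every strongly hyperbolic metric on $\G$ has non-arithmetic length spectrum; in particular $\ell_\alpha$ is non-arithmetic.

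With all hypotheses verified, I would then invoke Theorem \ref{thm.main} directly, taking $d = \alpha$ and $d_\ast = \alpha_\ast$. This produces the mean distortion $\tau = \tau(\alpha_\ast/\alpha) > 0$ and variance $\sigma^2 = \sigma^2(\alpha_\ast/\alpha) \ge 0$ for which the displayed counting distribution converges to $\mathcal{N}_\sigma(t)$, completing the proof. I note in passing that the equivalence $\sigma^2 = 0 \Leftrightarrow \alpha, \alpha_\ast$ roughly similar, inherited from Theorem \ref{thm.main}, connects this CLT to the rigidity/Manhattan curve discussion for pairs of Anosov representations.
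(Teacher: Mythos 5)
Your proposal is correct and follows essentially the same route as the paper: the theorem is obtained as an immediate corollary of Theorem~\ref{thm.main} once one knows, via \cite[Lemma~7.1]{cantrell.tanaka.2}, that Hilbert length functionals of projective Anosov representations are strongly hyperbolic metrics in $\Dc_\G$. One small redundancy: you treat non-arithmeticity as a condition to verify, but Theorem~\ref{thm.main} only requires it in the virtually free case, so the standing assumption that $\G$ is not virtually free already discharges that hypothesis without any appeal to~\cite{GMM2018}.
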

For Hitchin representations, the variance being $0$ in the above theorem determines the representation entirely (i.e.  $\sigma = 0$ if and only if the representations are the same or are contragredient). This also holds for Benoist representations, see \cite[Remark 7.11]{weinhardetal}.

\subsection{Rigidity results}\label{sec.rigid}
When comparing metrics $d,d_\ast$ a natural object to study is their \textit{Manhattan curve} $\theta_{d_\ast/d}$. This curve is defined as follows, for each $s \in \R$ we define $\theta_{d_\ast/d}(s)$ to be the abscissa of convergence of 
\[
\sum_{x\in\G} e^{-sd_\ast(o,x) - td(o,x)}
\]
as $t$ varies.  In \cite{cantrell.tanaka.2} it was shown that for pairs of strongly hyperbolic metrics $\theta_{d_\ast/d}$ is analytic. It was also shown that the constant $- \tau(d_\ast/d)$ is the derivative at $0$ of $\theta_{d_\ast/d}(s)$, i.e. $\theta'_{d_\ast/d}(0) = - \tau(d_\ast/d)$. 
By studying the Manhattan curve the first author  and Tanaka proved various rigidity results for pairs of metrics in $\Dc_\G$. In particular they show that the following are equivalent (see Theorem \ref{thm.equiv} below):
\begin{enumerate}
\item $d$ and $d_\ast$ are roughly similar;
\item the Manhattan curve $\theta_{d_\ast/d}$ is a straight line;
\item $\tau(d_\ast/d) = v_d/v_{d_\ast}$; and,
\item there exists $\tau > 0$ such that $\ell_d[x] = \tau \ell_{d_\ast}[x]$ for all $x \in \G$.
\end{enumerate}
Here $v_d$ (respectively $v_{d_\ast}$) is exponential growth rate of $d$ (respectively $d_\ast$), i.e.
\[
v_d = \limsup_{T\to\infty} \frac{1}{T} \log \#\{ x \in \G: d(o,x) < T\} \ \ \text{ and } \ \ v_{d_\ast} = \limsup_{T\to\infty} \frac{1}{T} \log \#\{ x \in \G: d_\ast(o,x) < T\}.
\]
When any of the above statements fail to hold it was shown that $\theta_{d_\ast/d}$ is a globally strictly convex function and in fact $\theta_{d_\ast/d}'' (t) > 0$ for all $t \in \R$. A large deviation principle comparing $d,d_\ast$ was also shown in \cite{cantrell.tanaka.1}.

Theorem \ref{thm.var} and Theorem \ref{thm.main} then provide an additional rigidity characterisation to those above. As with the constant $\tau(d_\ast/d)$ we can see $\sigma^2(d_\ast/d)$ in the Manhattan curve $\theta_{d_\ast/d}$ and in this case as a second derivative.

\begin{theorem}\label{thm.var2}
The constant $\sigma^2(d_\ast/d) \ge 0$ appearing in Theorem \ref{thm.var} can be realised as $\sigma^2(d_\ast/d) = \theta_{d_\ast/d}''(0)$.
\end{theorem}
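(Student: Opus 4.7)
The plan is to identify $\theta_{d_\ast/d}''(0)$ with the asymptotic variance in Theorem \ref{thm.var} by reading both quantities off from the thermodynamic formalism machinery that underlies the proof of Theorem \ref{thm.main}. Concretely, I would use a symbolic coding of $\G$ of the type developed in \cite{cantrell.tanaka.1, cantrell.tanaka.2}: a topologically mixing subshift of finite type $(\Sigma,\sigma)$ together with H\"older continuous potentials $\phi,\psi:\Sigma\to\R$ whose Birkhoff sums approximate $d_\ast(o,\cdot)$ and $d(o,\cdot)$ up to bounded error along admissible words. Under this coding, the Manhattan curve is determined implicitly by the pressure equation
\[
P\bigl(-s\phi - \theta_{d_\ast/d}(s)\psi\bigr) = 0.
\]

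The first step is a standard implicit-differentiation computation. Differentiating the pressure equation in $s$ and evaluating at $s=0$ recovers $\theta'(0) = -\int\phi\,d\mu_0/\int\psi\,d\mu_0 = -\tau$, where $\mu_0$ is the equilibrium state of $-v_d\psi$. Differentiating a second time and using the standard formula for the derivative of the equilibrium state yields
\[
\theta''_{d_\ast/d}(0) \;=\; \frac{\mathrm{Var}_{\mu_0}(\phi-\tau\psi)}{\int \psi\,d\mu_0},
\]
where $\mathrm{Var}_{\mu_0}$ denotes the asymptotic (dynamical) variance of Birkhoff sums under $\mu_0$.

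The second step is to match this with $\sigma^2(d_\ast/d)$. In the proof of Theorem \ref{thm.main}, the counting CLT is obtained by transferring a symbolic CLT for $\phi-\tau\psi$ under $\mu_0$ to a CLT for $d_\ast(o,\cdot)-\tau d(o,\cdot)$ on $d$-balls. The change of scale from symbolic length $n$ to $d$-distance $T$ is governed by $T\approx n\int\psi\,d\mu_0$, so the variance per unit of $d$-distance acquires the denominator $\int\psi\,d\mu_0$. One also checks that the discrepancy $\tau(d(o,x)-T)$ is $O(1)$ on the bulk of $\{d(o,x)<T\}$ (because of the exponential growth near the boundary), so it is negligible after dividing by $\sqrt{T}$. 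Combining these ingredients gives $\sigma^2(d_\ast/d) = \mathrm{Var}_{\mu_0}(\phi-\tau\psi)/\int\psi\,d\mu_0 = \theta''_{d_\ast/d}(0)$.

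The main obstacle is the careful bookkeeping between the two natural timescales (symbolic time versus $d$-distance) and the corresponding measures (the symbolic equilibrium state $\mu_0$ versus the Patterson--Sullivan measure on $\partial\G$ used for counting). Once the coding underlying Theorem \ref{thm.main} is in place, however, the identification $\sigma^2(d_\ast/d) = \theta''_{d_\ast/d}(0)$ becomes a routine consequence of the implicit function theorem applied to the pressure equation, combined with the Tauberian transfer from symbolic to counting asymptotics that is already built into the proof of the main theorem.
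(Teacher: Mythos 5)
Your implicit-function-theorem computation of $\theta''_{d_\ast/d}(0)$ is correct and is essentially the same computation the paper performs: writing $w(t) = \theta_{d_\ast/d}(t) + t$ and solving $P(w(t),t)=0$, one finds $w'(0)=0$ and $w''(0) = -\lambda_{tt}(1,0)/\lambda_s(1,0)$, which after unwinding the derivatives of $\lambda = e^{P}$ is exactly your $\mathrm{Var}_{\mu_0}(\phi - \tau\psi)/\int\psi\,d\mu_0$. That part lines up.

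Where your argument diverges from the paper and does not actually constitute a proof is the matching of the counting variance $\sigma^2(d_\ast/d)$ (the limit in Theorem \ref{thm.var}) with this symbolic expression. You propose to ``transfer a symbolic CLT for $\phi-\tau\psi$ under $\mu_0$ to a CLT on $d$-balls'' via a change of timescale $T\approx n\int\psi\,d\mu_0$. But the uniform counting measure on $\{g : d(o,g)<T\}$ is not the pushforward of $\mu_0$ along the coding, and the paper stresses (Section \ref{sec.method}) that the absence of such a probabilistic model for $d$-balls is precisely the obstruction that forces the Poincar\'e-series approach. In the paper's proof, $\sigma^2(d_\ast/d) = -\lambda_{tt}(1,0)/\lambda_s(1,0)$ is obtained by reading the constant $\sigma^2$ directly off the leading residue in Proposition \ref{prop.analytic} (equation (\ref{eq.variance})) and then applying the Tauberian theorem to $\eta_2$ to get $\pi_2(T)\sim C\sigma^2 e^{T}T$; no symbolic CLT is ever transferred. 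Your timescale heuristic is a correct intuition for the appearance of the denominator $\int\psi\,d\mu_0$, but to rigorize it you would have to reconstruct the Poincar\'e-series/Tauberian machinery anyway (including the comparison of pressures across non-mixing components of the Cannon coding, which your ``topologically mixing subshift'' glosses over and which is handled by Lemma \ref{lem.pressagree}). The clean route is to take $\sigma^2 = -\lambda_{tt}(1,0)/\lambda_s(1,0)$ as already established by Proposition \ref{prop.analytic} and Theorem \ref{thm.var}, and then let your implicit-function-theorem calculation do the rest.
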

We prove this result in Section \ref{sec.proofs} along with Theorem \ref{thm.var}.
\begin{remark}
This result has the following consequence.
When $\G$ is the fundamental group of a closed hyperbolic surface of genus $\mathfrak{g}$ and $\rho_t$ for $-\epsilon < t < \epsilon$ is a smooth (non-constant) family of Riemannian surfaces then the Weil-Petersson metric can be recovered as
\[
\left\|\left.\frac{d}{dt}\right|_{t=0} \rho_t\right\|^2_{\text{WP}} = \frac{3\pi(\mathfrak{g}-1)}{2}\left.\frac{d^2}{dt^2}\right|_{t=0} \sigma^2(d_t/d_0)
\]
where each $d_t$ for $-\epsilon < t < \epsilon$ is the lift of $\rho_t$ to $\G$. This follows from Theorem \ref{thm.var} and \cite[Lemma 4.2]{sharp.pollicott.wp} and complements results of McMullen \cite{mcmullen}.
\end{remark}

\subsection{Ideas behind the proof and structure of the paper}\label{sec.method}
We now outline our method of proof. We note that our theorems do not readily follow from the works of the second author and Tanaka \cite{cantrell.tanaka.2}, \cite{cantrell.tanaka.1} (in which the corresponding large deviation principle was proven). This is because, although the methods and techniques introduced in those papers can be used to study Manhattan curves, they cannot be used to study versions of the Manhattan curve with complex perturbations/parameters. Understanding complex versions of Manhattan curves is vital as they are linked to the Fourier transforms of our counting distributions (and hence they determine whether the central limit theorem holds).  We also note that it does not appear possible to tackle Theorem \ref{thm.main} using random walks on groups as in \cite{GTT} or Markov chains as in \cite{cantrell.sert}. This is because, as we discussed above, unlike for counting over group elements of word length at most $T$ as in \cite{GTT} and \cite{cantrell.sert},  there is not a well understood probabilistic model for counting over the group elements $g$ satisfying $d(o,g) < T$.

To prove Theorem \ref{thm.main} we use the method of moments and techniques from thermodynamic formalism. That is, we show that for each $p\ge 1$ the $p$th moments of the sequence of distributions in Theorem \ref{thm.main} converge as $T\to\infty$ to the appropriate limit (see equation (\ref{eq.moments})). To do this we
\begin{enumerate}
\item introduce a two variable Poincar\'e series $\eta$ that encodes information about $d$ and $d_\ast$;
\item prove results about the domain of analyticity and poles of $\eta$ (this step uses Cannon's automatic structure for hyperbolic groups and builds upon recent results from \cite{cantrell.tanaka.2}, \cite{cantrell}); and,
\item extract the asymptotics we need for the method of moments using a Tauberian Theorem. 
\end{enumerate}
This method of proof was inspired by the work of Morris \cite{Morris} on the statistical properties of the  Euclidean algorithm and also the work of Hwang and Janson \cite{hwang.janson}.

The idea of proving asymptotic counting  results through  Poincar\'e series has been used by many authors. The novelty of our current work is overcoming difficult technical issues that appear in (2) above. During this step we use techniques from thermodynamic formalism which we introduce through the use of Cannon's coding: a finite direct graph that represents our hyperbolic group equipped with a finite generating set (see Definition \ref{def.sms}). Our first technical issue stems from the fact that it is not known whether it is alway possible to find a connected Cannon coding. This has been a recurring issue in the ergodic theory of hyperbolic/negatively curved groups. When studying counting results for word metrics, a beautiful argument of Calegari and Fujiwara \cite{CalegariFujiwara2010} circumvents this issue by showing that  there is consistent behaviour between different connected components in the Cannon graph (specifically the components that are in some sense `big' for the word metric). This argument (and its variants) have  been used by many authors \cite{GTT}, \cite{stats}, \cite{cantrell.sert}, \cite{GTT}. Notably, this argument was used by Gouez\"el in \cite{GouezelLocalLimit} to study the asymptotic growth rate of the transition probabilities for random walks on hyperbolic groups.

 Unfortunately the Calegari - Fujiwara argument and its variants cannot be used in our current setting. This is because these arguments are not suitable for dealing with the large complex perturbations that appear in the study of our Poincar\'e series in (1). To overcome this issue we combine and then build upon arguments from \cite{cantrell.tanaka.2}  and \cite{cantrell}. First we use a result from \cite{cantrell.tanaka.2} that shows that the Mineyev topological flow (a coarse geometric analogue of a geodesic flow) can be coded by the suspension flow over a subshift of finite type coming from the Cannon coding. We then combine this result with an argument from \cite{cantrell} to prove a precise complex version of Calegari-Fujiwara's argument that compares the components in the Cannon graph that are `big' for our counting metric $d$. This argument culminates in Lemma \ref{lem.pressagree} which is the crucial technical result that allows us to carry out our analysis. We hope that this result will have other applications in the ergodic theoretic study of hyperbolic groups. 


After obtaining Lemma \ref{lem.pressagree} there is one final additional technical difficulty that we need to overcome: in our general setting we do not have access to strong Dolgopyat type bounds for appropriate transfer operators corresponding to our counting metric. To get around this we show that the assumption that $\G$ is not virtually free (or $d$ has non-arithmetic length spectrum) implies strong enough spectral properties for our transfer operators (Proposition \ref{prop.srless}) and that these are enough to deduce the central limit theorem. 
\subsection*{Organisation}
The paper is structured as follows. In Section $2$ we cover preliminary material related to hyperbolic groups and geometries. We also, in this section, state and prove the technical domain of analyticity result mentioned in $(2)$ above. The proof of this result (which appears as Proposition \ref{prop.analytic}) will occupy most of the second section. In Section $3$ we use the method of moments to deduce Theorem \ref{thm.main} and in the final section, we present some further applications of our results.


\section{Preliminaries}

\subsection{Hyperbolic groups and metrics}

We only briefly introduce the required preliminary materials concerning hyperbolic groups and metrics here. See Section 2 of \cite{cantrell.tanaka.1} for a more detailed account.

\begin{definition}
A metric space $(X, d)$ is said to be $\d$-{\it hyperbolic} for some $\delta \ge 0$ if
\begin{equation*}
(x,y)_w \ge \min\left\{(x,z)_w, (y,z)_w\right\}-\d \quad \text{for all $x, y, z, w \in X$}
\end{equation*}
where $(x,y)_w$ is the Gromov product:
\[
 (x,y)_w =\frac{1}{2}( d(w, x)+d(w, y)-d(x, y)) \ \ \text{for $x, y, w \in X$}.
\]
A metric space is called {\it hyperbolic} if it is $\d$-hyperbolic for some $\d \ge 0$.
\end{definition}

A \emph{hyperbolic group} is a finitely generated group $\G$ such that the Cayley graph $(\Cay(\G,S), |\cdot|_S)$ equipped with the word metric associated to a finite generating set $S$ is a hyperbolic metric space. All hyperbolic groups will be assumed to be \textit{non-elementary}, i.e. they do not contain a finite index cyclic subgroup. In the statement of 
Theorem \ref{thm.var} and 
Theorem \ref{thm.main} the assumption of $\G$ not being \emph{virtually free} means that $\G$ does not contain a free group as a finite index subgroup.

\begin{definition}
We say that two metrics $d,d_\ast$ on $\G$ are {\it quasi-isometric} if there exist constants $L > 0$ and $C \ge 0$ such that
\[
L^{-1} \, d(x, y)-C \le d_\ast(x, y) \le L \, d(x, y)+C \ \text{ for all $x, y \in \G$}.
\]
\end{definition}
Throughout this work,  $\Dc_\G$ will denote the set of metrics which are left-invariant, hyperbolic and quasi-isometric to some (equivalently, any) word metric in $\G$. 
Recall that for $d\in \Dc_\G$ we use the notation $v_d$ for the exponential growth rate of $d$, i.e.
\[
v_d = \lim_{T\to\infty} \frac{1}{T} \log \#\{ g\in\G: d(o,g) < T\} \ \text{(which is necessarily strictly positive)}.
\]
Furthermore, for any $d \in \Dc_\G$ there exists $C_1, C_2 > 0$ such that
\[
C_1 e^{v_d T} \le \#\{x \in \G: d(o,x) < T\} \le C_2 e^{v_d T} \ \text{ for all $T > 0$.}
\]
We recall that two metrics $d,d_\ast \in \Dc_\G$ have the property that there exists $\tau, C >0$ such that $|\tau d(o,g) - d_\ast(o,g)| < C$ for all $g\in\G$ then we say that $d,d_\ast$ are \textit{roughly similar}. We recall that $o\in\G$ is the identity element. 
The translation length function of $d \in \Dc_\G$ is defined as the limit
\[
\ell_d[g] = \lim_{n\to\infty} \frac{d(o,g^n)}{n}
\]
and is defined on conjugacy classes $[g]$ in $\G$. The \textit{length spectrum} of $d$ is the collection $\{ \ell_d[g] : g \in \G\}$ and we say that $d$ has \textit{arithmetic length spectrum} if this set is contained in a lattice $a \Z$ for some $a \in \R$. Otherwise we say that $d$ has \textit{non-arithmetic length spectrum}.

Fix a metric $d$ in $\Dc_\G$.
\begin{definition}
Given an interval $I \subset \R$ and constants $L, C >0$ we say that a map $\g: I \to \G$ is an \textit{$(L, C)$-quasi-geodesic} if 
\[
L^{-1} \, |s-t|-C \le d(\g(s), \g(t)) \le L\, |s-t|+C \  \text{ for all $s, t \in I$},
\]
and a \textit{$C$-rough geodesic} if
\[
|s-t|-C \le d(\gamma(s), \gamma(t))\le |s-t|+C \ \text{ for all $s, t \in I$}.
\]
\end{definition}
A $0$-rough geodesic is referred to as a geodesic.
A metric space $(\G, d)$ is called $C$-{\it roughly geodesic} if for each pair of elements $x, y \in \G$ we can find a $C$-rough geodesic joining $x$ to $y$. We say that $(\G,d)$ is \textit{roughly geodesic} if it is $C$-roughly geodesic for some $C \ge 0$.
It is known that every metric in $\Dc_\G$ is roughly geodesic \cite{BonkSchramm}.

Our main results are concerning the following class of metrics.

\begin{definition}
A hyperbolic metric $d$ in $\G$ is called {\it strongly hyperbolic} if there exist positive constants $c, R_0 > 0$ such that for all $R \ge R_0$, and all $x, x', y, y' \in \G$,
if
$ d(x, y)-d(x, x')+d(x', y')-d(y, y') \ge R,$
then
\[
|d(x, y)-d(x', y)-d(x, y')+d(x', y')| \le e^{-c R}.
\]
\end{definition}

\begin{remark} 
Examples of strongly hyperbolic metrics include:

\noindent (1) The Green metric associated to random walk on $\G$ as discussed in the introduction.

\noindent (2) The Mineyev hat metric introduced in \cite{MineyevFlow}.

\noindent (3) The orbit metric coming from a properly discontinuous, cocompact, free and isometric action on a $\text{CAT}(-1)$ metric space.

\noindent (4) Linear functions on the Cartan algebra associated to Anosov representations.
\end{remark}

Hyperbolic groups can be compactified using their Gromov boundary $\partial \G$ which consists of equivalence classes
of divergent sequences. We fix a reference metric $d \in \Dc_\G$ and say that sequence of group elements $\{g_n\}_{n=0}^\infty$ diverges
if $(g_n|g_m)_o$ (computed with respect to $d$) diverges as $\min\{n, m\}$ tends to infinity. Two divergent sequences $\{g_n\}_{n=0}^\infty$ and $\{h_n\}_{n=0}^\infty$ are {\it equivalent} if $(g_n|h_m)_o$ diverges as $\min\{n, m\}$ tends to infinity. 
If $d \in \Dc_\G$ is $C$-roughly geodesic then for each $\x$ in $\partial \G$ there exists a $C$-rough geodesic $\g:[0, \infty) \to \G$ 
such that $\g(0)=o$ and $\g(n) \to \x$ as $n \to \infty$.

The Busemann function $ \beta_w(g, \x)$ associated to $d \in \Dc_\G$ based at $w \in \G$ is given by
\[
\beta_w(g,\x) = \sup\left\{ \limsup_{n\to\infty} d(g,\x_n) - d(w,\xi_n) : \x = \{ \x_n\}_{n=0}^\infty \right\}.
\]
When $d$ is strongly hyperbolic $\beta_w$ is obtained as a limit 
\[
\b_w(g, \x)=\lim_{n \to \infty}\( d(g, x_n)- d(w, x_n)\) \quad \text{for $(g, \x) \in \G \times \partial \G$},
\]
where $\x=\{g_n\}_{n=0}^\infty$,
and is continuous with respect to $\x$ in $\partial \G$.
Moreover in this case we have the cocycle identity
\[
\b_w(gh, \x)= \b_w(h, g^{-1}\x)+ \b_w(g w, \x) \quad \text{for $w, g, h \in \G$ and $\g\in\G\cup \partial \G$}.
\]

Before moving on we recall the following results of the first author  and Tanaka.
Recall that the \textit{Manhattan curve} $\theta_{d_\ast/d}$ is defined as follows. For each $s \in \R$ define $\theta_{d_\ast/d}(s)$ to be the abscissa of convergence of 
\[
\sum_{x\in\G} e^{-sd_\ast(o,x) - td(o,x)}
\]
as $t$ varies.  In \cite{cantrell.tanaka.1} and \cite{cantrell.tanaka.2} it was shown that for pairs of strongly hyperbolic metrics $d,d_\ast$, $\theta_{d_\ast/d}$ is analytic. The following was also shown.
\begin{theorem}[Theorem 1 of \cite{cantrell.tanaka.1}]\label{thm.equiv}
Let $d,d_\ast \in \Dc_\G$ be metrics. Then the limit
\[
\tau(d_\ast/d) := \lim_{T\to\infty} \frac{1}{\#\{g\in\G: d(o,g) <T\}} \sum_{d(o,g) < T} \frac{d_\ast(o,g)}{T} > 0
\]
exists and is called the \emph{mean distortion} of $d_\ast$ with respect to $d$. We also have that $\tau(d_\ast/d) = - \theta_{d_\ast/d}'(0)$ and $\tau(d_\ast/d) \ge v_d/v_{d_\ast}$. Further, the following are equivalent:
\begin{enumerate}
\item $d$ and $d_\ast$ are roughly similar;
\item $\theta_{d_\ast/d}$ is a straight line;
\item $\tau(d_\ast/d) = v_d/v_{d_\ast}$; and,
\item there exists $\tau > 0$ such that $\ell_d[x] = \tau \ell_{d_\ast}[x]$ for all $x \in \G$.
\end{enumerate}
\end{theorem}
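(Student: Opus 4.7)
The plan is to combine Cannon's automatic structure for $\G$ with thermodynamic formalism. Cannon's coding enumerates group elements (with bounded multiplicity and bounded additive corrections to both $d$ and $d_\ast$, which are roughly geodesic) by finite paths in a finite directed graph, and this realises the two-variable Poincar\'e series
\[
\eta(s,t) = \sum_{x \in \G} e^{-s d_\ast(o,x) - t d(o,x)}
\]
essentially as a geometric sum in a parameter-dependent transfer operator $\mathcal{L}_{s,t}$ acting on H\"older functions over the associated subshift of finite type. Analytic perturbation theory gives a jointly analytic leading eigenvalue $\lambda(s,t)$, and the Manhattan curve is the level set $\{\lambda(s,t) = 1\}$, which furnishes both the analyticity of $\theta_{d_\ast/d}$ and a formula for $\theta_{d_\ast/d}'(0)$ as a ratio of integrals of symbolic potentials against the associated equilibrium state.

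For existence of $\tau(d_\ast/d)$ and the identity $\tau = -\theta_{d_\ast/d}'(0)$, I would differentiate $\eta$ in $s$ at $s=0$ to obtain $\sum_x d_\ast(o,x) e^{-t d(o,x)}$ and apply a Wiener--Ikehara Tauberian theorem; the required meromorphic continuation past $t = v_d$ with a unique simple pole follows from the spectral gap of $\mathcal{L}_{0,v_d}$. The inequality $\tau \ge v_d/v_{d_\ast}$ is then immediate from convexity of $\theta_{d_\ast/d}$ together with the boundary values $\theta_{d_\ast/d}(0) = v_d$ and $\theta_{d_\ast/d}(v_{d_\ast}) = 0$. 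For the four-fold equivalence, the implications (1)$\Rightarrow$(2), (2)$\Rightarrow$(3), and (1)$\Rightarrow$(4) are routine: rough similarity makes $\theta_{d_\ast/d}$ an explicit line; a linear Manhattan curve must pass through its two known boundary points, forcing its slope to equal the secant $-v_d/v_{d_\ast}$; and proportional distances give proportional translation lengths directly from the defining limit.

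The substance, and the main obstacle, lies in closing the circle via (3)$\Rightarrow$(1) and (4)$\Rightarrow$(1). Both reduce to a rigidity statement: if $\theta_{d_\ast/d}$ degenerates to a straight line, equivalently if the variance of $\varphi_{d_\ast} - \tau \varphi_d$ under the equilibrium state vanishes (where $\varphi_d, \varphi_{d_\ast}$ are the symbolic potentials representing $d, d_\ast$ under Cannon's coding coupled with Mineyev's topological flow), then Livsic's theorem forces $\varphi_{d_\ast} - \tau \varphi_d$ to be a coboundary, and unwinding this back to the group produces the estimate $|d_\ast(o,g) - \tau d(o,g)| = O(1)$. Executing this Livsic step is the hard part, for two reasons. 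First, Cannon's graph need not be strongly connected, so one has to identify the recurrent components carrying full exponential weight for the $d$-count, verify via a Calegari--Fujiwara type argument that the associated equilibrium states share a common topological pressure, and patch the coboundary consistently across them. Second, for (4)$\Rightarrow$(1) one must additionally bridge the periodic-orbit length spectrum of the symbolic system and the translation length spectrum $\{\ell_d[g]\}$ on conjugacy classes of $\G$; the Mineyev flow identification of symbolic periodic orbits with $\G$-conjugacy classes is what lets equality of translation length spectra feed back into vanishing variance, closing the cycle.
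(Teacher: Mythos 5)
The paper quotes this result verbatim from \cite{cantrell.tanaka.1} (where it appears as Theorem~1) and does not reprove it; there is thus no internal argument for me to compare your sketch against, and I can only compare it with what the cited work plausibly does and with the related material the present paper develops.

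Your sketch is a sensible roadmap for the strongly hyperbolic case but does not cover the generality actually claimed. The hypothesis is $d, d_\ast \in \Dc_\G$, a class that includes word metrics and other metrics that are \emph{not} strongly hyperbolic. For such metrics the Busemann functions are not continuous cocycles, and the symbolic potentials arising from a Cannon coding are bounded but need not be H\"older; the spine of your plan --- a jointly analytic leading eigenvalue $\lambda(s,t)$ of a transfer operator $\mathcal{L}_{s,t}$, the Manhattan curve as a smooth level set $\{\lambda(s,t)=1\}$, and Wiener--Ikehara applied to $\partial_s\eta(s,t)|_{s=0}$ --- is therefore not available in the stated generality. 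Indeed, the paper itself attributes analyticity of $\theta_{d_\ast/d}$ to strongly hyperbolic pairs (via \cite{cantrell.tanaka.2}), and for arbitrary $\Dc_\G$ metrics one only has convexity of $\theta_{d_\ast/d}$; the formula $\tau(d_\ast/d) = -\theta_{d_\ast/d}'(0)$ has to be proved by other means, in \cite{cantrell.tanaka.1} through the Mineyev topological flow, quasi-conformal/Patterson--Sullivan measures and the ergodic theorem rather than spectral perturbation. There is also a more local snag in your Tauberian step: if $d$ has arithmetic length spectrum (as a word metric does, and as can happen for virtually free $\G$), then $\sum_{g}e^{-t d(o,g)}$ has singularities all along $\mathfrak{Re}(t)=v_d$, not a unique simple pole, and classical Wiener--Ikehara does not apply on the nose; existence of $\tau(d_\ast/d)$ is more robustly obtained from the ergodic theorem for the equilibrium or flow-invariant measure than from boundary-singularity analysis of a Dirichlet series. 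Where your sketch is on the mark is in flagging the rigidity directions $(3)\Rightarrow(1)$ and $(4)\Rightarrow(1)$ as the crux, to be resolved by a Livsic coboundary argument patched consistently across the recurrent components of a disconnected Cannon graph and translated between symbolic periodic orbits and conjugacy classes via the Mineyev flow; the present paper's Lemma~\ref{lem.pressagree} is precisely the multivariable refinement of that patching, developed here to push the method to complex perturbations for the CLT.
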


We also recall one of the main results from \cite{cantrell}: for any strongly hyperbolic metric $d \in \Dc_\G$ with non-arithmetic length spectrum there exists $C > 0$ such that
\[
 e^{- v_d T} \, \#\{ g \in \G: d(o,g) < T\} \to C
\]
as $T\to\infty$. This is usually referred to as the orbital counting asymptotic.

\subsection{Poincar\'e series and domains of analyticity}\label{sec.analytic}
Throughout the rest of this work we fix two strongly hyperbolic metrics $d,d_\ast \in \Dc_\G$ and assume that they have been chosen/scaled so that the following conditions hold. 

\medskip
\noindent
\textit{Standing assumptions:} We assume $\G$ is not virtually free or that $\G$ is virtually free and $d$ has non-arithmetic length spectrum. Suppose that $d,d_\ast$ are not roughly similar and have been scaled so that 
\begin{enumerate}
\item $d$ has exponential growth rate $1$; and,
\item $\tau(d_\ast/d) = 1$.
\end{enumerate}
We will assume that these conditions hold unless otherwise stated.

We define the following function.
\begin{definition}
For $s, t \in \mathbb{C}$ we formally define the two variable \emph{Poincar\'e series}
\[
\eta(s,t) = \sum_{g\in\G} e^{-sd(o,g) - t(d_\ast(o,g) - d(o,g))}.
\]
\end{definition}
Using recent results of the first author and Tanaka \cite{cantrell.tanaka.2}, \cite{cantrell} we analyse the domain of analyticity of $\eta$ to prove the following proposition.
\begin{proposition}\label{prop.analytic}
For each $p \ge 0$ the series
\[
\eta_p(s) = \sum_{g\in\G} (d_\ast(o,g) -  d(o,g))^p e^{-sd(o,g)}
\]
is analytic in the region $\mathfrak{Re}(s) \ge 1$ except for a singularity at $s = 1$. 
The following then hold depending on the parity of $p$.
\begin{enumerate}
\item  When $p$ is odd this singularity consists of integer order poles that have orders bounded above by $(p+1)/2$; and
\item when $p$ is even, in a neighbourhood $U$ of $1$, we can write
\[
\sum_{g\in\G} (d_\ast(o,g) - d(o,g))^p e^{-sd(o,g)} = \frac{R_p(s)}{(s-1)^{1 + \frac{p}{2}}}
\]
where $R_p(s)$ is analytic in $U$. Furthermore when this is the case 
\[
R_p(1) = \frac{C p! \sigma^p}{2^{\frac{p}{2}}}
\]
for each (even) $ p \ge 0$ where $C, \sigma^2 >0$ are constants independent of $p$.
\end{enumerate}
\end{proposition}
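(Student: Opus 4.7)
The plan is to realise $\eta(s,t)$ as a function meromorphic on a complex neighbourhood of $(1,0)$ whose only singular locus near $(1,0)$ is a curve $s = F(t)$ analytic in $t$, and then to extract each $\eta_p(s)$ by differentiating $p$ times in $t$ at $t=0$. The singularity analysis rests on thermodynamic formalism applied to Cannon's automatic coding of $\G$, while the shape of $F$ near $0$ (flat to first order, with second derivative $\sigma^2$) is supplied by Theorem \ref{thm.equiv} together with the standing normalisations $v_d = 1$ and $\tau(d_\ast/d) = 1$.

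First I would fix Cannon's coding of $\G$, giving a finite directed graph whose paths from a distinguished vertex are in bijection with group elements. Strong hyperbolicity of $d$ and $d_\ast$ lets their Busemann cocycles be pulled back, via the boundary map of the coding, to H\"older potentials $\phi, \phi_\ast$ on the two-sided subshift of finite type associated to each irreducible component. These potentials satisfy $d(o,g) = S_n\phi + O(1)$ and $d_\ast(o,g) = S_n\phi_\ast + O(1)$ along coding sequences of length $n$, so modulo an analytic error $\eta(s,t)$ decomposes as a sum over components of ratios $A^{(j)}(s,t)/(1 - \lambda^{(j)}(s,t))$, where $\lambda^{(j)}(s,t)$ is the leading eigenvalue of the transfer operator with potential $-s\phi - t(\phi_\ast - \phi)$ on the $j$-th component.

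The main obstacle is that the Cannon graph is not known to be connected, so a priori different components contribute different leading eigenvalues. Here I would invoke Lemma \ref{lem.pressagree} (the complex analogue of the Calegari--Fujiwara comparison), which guarantees that every ``$d$-big'' component shares a common leading eigenvalue $\lambda(s,t)$ throughout a complex neighbourhood of $(1,0)$, together with Proposition \ref{prop.srless}, which under the standing non-arithmeticity or not-virtually-free assumption provides the spectral gap needed to rule out further singularities on $\mathfrak{Re}(s) = 1$. Combining these inputs one obtains a factorisation
\[
\eta(s,t) = \frac{A(s,t)}{s - F(t)} + B(s,t)
\]
on a neighbourhood of $(1,0)$, with $A, B$ analytic and $A(1,0) = C > 0$ (the constant of the orbital counting asymptotic for $d$), and with $F(t)$ the abscissa of convergence of $\eta(\cdot, t)$. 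Theorem \ref{thm.equiv} together with the normalisations yields $F(0) = 1$, $F'(0) = 1 - \tau(d_\ast/d) = 0$, and $F''(0) = \theta''_{d_\ast/d}(0) = \sigma^2 > 0$, the last being strict because $d, d_\ast$ are not roughly similar.

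Finally, since $\eta_p(s) = (-1)^p \partial_t^p|_{t=0}\eta(s,t)$ and $B$ is analytic, I would expand
\[
\frac{A(s,t)}{s - F(t)} = \sum_{k \ge 0} \frac{A(s,t)\,(F(t)-1)^k}{(s-1)^{k+1}},
\]
and note that the term of index $k$ contributes to $\partial_t^p|_{t=0}$ only when $2k \le p$, because $(F(t)-1)^k$ vanishes to order $2k$ at $t = 0$. This gives a pole at $s = 1$ of order at most $\lfloor p/2 \rfloor + 1$, which is $(p+1)/2$ for odd $p$ and $1 + p/2$ for even $p = 2m$. For even $p = 2m$ the maximal contribution is from $k = m$, where $(F(t)-1)^m = (\sigma^2/2)^m t^{2m} + O(t^{2m+1})$, so
\[
\partial_t^p\big|_{t=0} \frac{A(s,t)(F(t)-1)^m}{(s-1)^{m+1}} = \frac{A(s,0)\, p!\, \sigma^p / 2^{p/2}}{(s-1)^{1 + p/2}},
\]
and setting $R_p(s) := (s-1)^{1+p/2}\eta_p(s)$ (analytic near $1$) gives $R_p(1) = C p! \sigma^p / 2^{p/2}$. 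The only genuinely hard step is the factorisation above; once Lemma \ref{lem.pressagree} and Proposition \ref{prop.srless} are in hand, the derivative extraction is a direct combinatorial expansion.
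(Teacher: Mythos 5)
Your proposal is correct and follows the same skeleton as the paper's argument: realise $\eta(s,t)$ via transfer operators on the Cannon coding, invoke Lemma \ref{lem.pressagree} to equalise the pressures over word-maximal components, invoke Proposition \ref{prop.srless} (together with the standing non-arithmeticity hypothesis) to handle $\mathfrak{Re}(s)=1$, $s\neq 1$, and reduce everything to the analytic singularity structure of $1-\lambda(s,t)$ at $(1,0)$. The one place you genuinely deviate is the final derivative extraction. The paper keeps $(1-\lambda(s,t))^{-1}$ and proves (``by induction, left to the reader'') a claim giving the shape of $\partial_t^p\eta(s,t)$, whereas you first solve $\lambda(s,t)=1$ for $s=F(t)$ via the implicit function theorem, factor the singular part as $A(s,t)/(s-F(t))$, and expand $\sum_{k\ge 0}A(s,t)(F(t)-1)^k(s-1)^{-(k+1)}$. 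Because $F'(0)=0$ forces $(F(t)-1)^k=O(t^{2k})$, the $p$th $t$-derivative at $t=0$ kills every term with $2k>p$, which gives the pole order and, for $p=2m$, the coefficient $A(1,0)\,p!\,\sigma^p/2^{p/2}$ at once without any induction. This is cleaner than the paper's route and would be a welcome simplification. Two small points you should make explicit to avoid gaps: first, $F$ should be defined directly as the analytic solution of $\lambda(s,t)=1$ furnished by the implicit function theorem (your description of it as the abscissa of convergence only makes literal sense for real $t$); and second, $F''(0)=\sigma^2$ should be derived from differentiating the implicit relation and using $\lambda_t(1,0)=0$, i.e.\ $F''(0)=-\lambda_{tt}(1,0)/\lambda_s(1,0)$, rather than by quoting $\theta''_{d_\ast/d}(0)=\sigma^2$, since that identity is itself a consequence of this proposition.
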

To prove this result we will use tools from thermodynamic formalism.
Fix a finite generating set $S$ for $\G$.
\begin{definition} \label{def.sms}
Let  $\Ac=(\Gc, w, S)$ be a collection where
\begin{enumerate}
\item $\Gc=(V, E, \ast)$ is a finite directed graph with distinguished vertex $\ast$ which we call the \textit{initial state}; and,
\item $w: E \to S$ is a labelling such that
for a directed edge path $(x_0, x_1, \dots, x_{n})$ (where $(x_i, x_{i+1})$ corresponds to a directed edge)
there is an associated path in the Cayley graph $\Cay(\G, S)$ beginning at the identity: the path corresponds to
\[
(o, w(x_0,x_1), w(x_0,x_1)w(x_1,x_2), \dots, w(x_0,x_1) \, \cdots \, w(x_{n-1}, x_n)).
\]
\end{enumerate}
Let $\ev$ denote the map that sends a finite path to the endpoint of the corresponding path in $\Cay(\G,S)$, 
$\ev(x_0,\ldots, x_n) = w(x_0,x_1) \, \cdots \, w(x_{n-1},x_n)$.
We say that $\Ac$  is a strongly Markov structure for the pair $\G, S$ if 
\begin{enumerate}
\item for each vertex $v\in V$ there exists a directed path from $\ast$ to $v$;
\item for each directed path in $\Gc$ the associated path in $\Cay(\G, S)$ is a geodesic; and,
\item the map $\ev$ defines a bijection between the set of directed paths from $\ast$ in $\Gc$ and $\G$.
\end{enumerate}
\end{definition}

It is well known that every hyperbolic group and finite generating set admits a strongly Markov automatic structure (cf. \cite{GhysdelaHarpe},  \cite[Section 3.2]{Calegari}) which we call a {\it Cannon coding}.

For technical reasons we augment Cannon codings by introducing an additional vertex labelled $0$. We also add directed edges from every vertex $x \in V \cup \{0\} \backslash \{\ast\}$ to $0$ and define $w(x,0) = o$ (the identity in $\Gamma$) for every $x \in V \cup \{0\} \backslash \{\ast\}$. We assume that every Cannon coding has been augmented in this way and will abuse notation by labelling the augmented structure, its edge and vertex set by $\mathcal{G}$, $V$ and $E$ respectively. We use the notation $\dot{0}$ to denote the infinite sequence consisting of only $0$s.

Using  $\mathcal{G}$ we introduce a subshift of finite type as follows. Let $A$ be the $k \times k$ (where $k$ is the cardinality of $V$), $0-1$ transition matrix describing $\Gc$.
We use $A(i,j)$ to denote the $(i,j)$th entry of $A$. The one-sided subshift of finite type associated to $A$ is the space of all infinite one-sided sequences allowed by $A$:
\[
\Sigma_A = \{(x_n)_{n=0}^{\infty} : x_n \in \{1,2,...,k\}, A(x_n,x_{n+1})=1, n \in \mathbb{Z}_{\ge 0}\}.
\]
When $A$ is clear, we will drop $A$ from the notation in the definition of the shift spaces and simply write $\SS$. As in the definition of $\Sigma_A$, we write $x_n$ for the $n$th coordinate of $x \in \Sigma$. 
The shift map $\sigma: \Sigma_A \rightarrow \Sigma_A$ shifts the entires of a sequence to the left by one and deletes the initial entry, i.e. it sends $x$ to $\sigma(x) = y$ where $y_n=x_{n+1}$ for  $n \in \mathbb{Z}_{\ge 0}$. 
For each $0<\theta <1$ we can define a metric $d_\theta$ on $\Sigma_A$:
given $x,y \in \Sigma_A$ set
 \[
d_\theta(x,y) = 
     \begin{cases}
       \theta^{N} &\quad \text{if $x_0 = y_0$ and $N \ge 1$ is the smallest integer $n$ such that $x_n \neq y_n$} \\
       1 &\quad \text{if $x_0 \neq y_0$}.
     \end{cases}
 \]
 We then define the vector space
\[
F_\theta(\Sigma_A) = \{ r : \Sigma_A \to \mathbb{C}:\text{$r$ is Lipschitz with respect to $d_\theta$} \}
\]
which we equip with the norm $\|r\|_\theta = |r|_\theta + |r|_\infty$ where $|r|_\infty$ is the usual sup-norm and $|r|_\theta$ denotes the least Lipschitz constant for $r$. The space $(F_\theta(\Sigma_A), \|\cdot\|_\theta)$ is a Banach space. Two functions $r_1,r_2 \in F_\theta(\SS_A)$ are said to be cohomologous if there exists continuous $\psi :\Sigma_A \rightarrow \mathbb{C}$ such that $r_1=r_2+\psi \circ \sigma - \psi.$ A well-known theorem of Livsic asserts that for $r\in F_\theta$, $r$ is cohomologous to the constant function with value $C$ if and only if  the set $\{S_nr(x) - Cn: x\in \Sigma_A, n \in \mathbb{Z}_{\ge 0}\}$ is bounded. Here $S_nr(x) = r(x) + r(\sigma(x))+...+r(\sigma^{n-1}(x))$ denotes the $n$th Birkhoff sum evaluated at $x\in\Sigma_A$. We will use this notation for Birkhoff sums throughout this work.

It will be convenient to extend the map $\ev$ defined on finite paths in Definition \ref{def.sms} to infinite paths.
If $x = (x_k)_{k=0}^\infty \in \SS$ and $n \in \Z_{\ge 0}$ then we set
\[
\ev_n(x) = \ev(x_0, x_1, \ldots, x_n) \in \G.
\]
We also define $\ev(x)$ to be the point in $\G \cup \partial \G$ determined by the $S$ geodesic corresponding to $x$, which we think of as starting at the identity in the Cayley graph for $\G,S$.

We say that a directed graph $\Gc$ is connected if there exists a (directed) path between any pair of vertices in $\Gc$. A \textit{connected component} of a finite directed graph is a maximal, connected subgraph.
A Cannon coding $\Gc$ associated to a pair $\G,$ $S$ will never be connected (as the $\ast$ state has only outgoing edges). We can however decompose $\Gc$ into connected components and apply techniques from symbolic dynamics to these components separately.

Given such a connected component $\Cc$, there exists a maximal integer $p_\Cc \ge 1$, known as the period, such that the length of every closed loop in $\Cc$ has length divisible by $p_\Cc$. 
When $p_\Cc =1$ we say that the component $\Cc$ is aperiodic. When $\Cc$ is aperiodic the corresponding subshift  $\SS_\Cc$  is topologically mixing. In general we can only be sure that $\Cc$ is connected  and that $(\SS_\Cc, \sigma)$ is topologically transitive. 
In this case $p_\Cc >1$  and we can decompose the vertex set $V(\Cc)$ for $\Cc$ into $p_\Cc$ disjoint collections of vertices $V_1, \ldots, V_{p_\Cc}$. Letting $\Sigma_j$ for each $j=1,\ldots, p_\Cc$ denote the set of elements in $\SS_\Cc$ that correspond to sequences starting with a vertex in $V_j$, we have that $\sigma(\Sigma_j) = \Sigma_{j+1}$ where $j,j+1$ are taken modulo $p_\Cc$. Each sub-system $(\SS_j, \sigma^{p_\Cc})$ is a mixing subshift of finite type.

Using the same notation as before, let $(\SS_\Cc, \sigma)$ be the shift space defined over a component $\Cc$.
For a function $\Psi$ on $\SS_\Cc$  we want to introduce the \textit{pressure} of $\P$ on $\SS_\Cc$ (or $\Cc$) and will do so via the variational principle.
In the following $\Mcc( \Cc)$ denotes the set of $\sigma$-invariant probability measures on $\SS_\Cc$ and
$h( \lambda)$ denotes the measure theoretical entropy of $(\Sigma_\Cc, \sigma, \lambda)$ (see Section 3 of \cite{ParryPollicott}).
\def\Pr{{\rm P}}
\begin{proposition}[The Variational Principle, Theorem 3.5 \cite{ParryPollicott}]\label{prop.vp}
For $\P \in F_\theta(\Sigma_\Cc)$ the supremum
\[
\Pr_\Cc(\P) =\sup_{\lambda \in \Mcc(\Cc)}\left\{h( \lambda)+\int_{\SS_\Cc}\P\,d\lambda\right\}
\]
is attained by a unique $\sigma$-invariant probability measure $\m_\Cc$ on $\Sigma_\Cc$. This quantity is referred to as the \textit{pressure} of $\P$ over $\Cc$.
\end{proposition}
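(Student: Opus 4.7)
The plan is to reduce to the mixing case and invoke the Ruelle--Perron--Frobenius (RPF) theorem. Since $(\SS_\Cc,\sigma)$ is topologically transitive with period $p_\Cc$, I would first decompose $\SS_\Cc$ into the $p_\Cc$ clopen sets $\Sigma_1,\ldots,\Sigma_{p_\Cc}$ that $\sigma$ cyclically permutes, each of which is topologically mixing under $\sigma^{p_\Cc}$ with Lipschitz potential $S_{p_\Cc}\P$. Equilibrium states for $(\SS_\Cc,\sigma,\P)$ are recovered by averaging $\sigma^j$-pushforwards of equilibrium states for $(\Sigma_1,\sigma^{p_\Cc},S_{p_\Cc}\P)$ over $j=0,\ldots,p_\Cc-1$, and the variational identities rescale by $p_\Cc$; so it suffices to handle the topologically mixing case.

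For the mixing case, introduce the transfer operator $L_\P\colon F_\theta(\SS_\Cc)\to F_\theta(\SS_\Cc)$ given by $(L_\P f)(x)=\sum_{\sigma y=x}e^{\P(y)}f(y)$. The RPF theorem produces a simple leading eigenvalue $\rho>0$ with strictly positive Lipschitz eigenfunction $h$, a dual probability eigenmeasure $\nu$ with $L_\P^\ast\nu=\rho\nu$, and setting $d\m_\Cc:=h\,d\nu$ (normalised so that $\int h\,d\nu=1$) gives a $\sigma$-invariant probability measure. Define $\Pr_\Cc(\P):=\log\rho$. The upper bound $h(\lambda)+\int\P\,d\lambda\le\Pr_\Cc(\P)$ for arbitrary $\lambda\in\Mcc(\Cc)$ is the classical partition/Jensen argument applied to the cylinder partition of $\SS_\Cc$. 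The matching lower bound, realised at $\m_\Cc$, follows from the Rokhlin formula $h(\m_\Cc)=-\int\log J_{\m_\Cc}\,d\m_\Cc$ with the explicit Jacobian $J_{\m_\Cc}=\rho^{-1}e^{-\P}\,h/(h\circ\sigma)$; substituting and telescoping the coboundary term yields $h(\m_\Cc)+\int\P\,d\m_\Cc=\log\rho$.

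Uniqueness of the maximiser comes from the Gibbs characterisation: every equilibrium state for a Lipschitz potential on a mixing subshift of finite type is a Gibbs state for that potential, and Gibbs states for a fixed Lipschitz potential are unique as probability measures. Uniqueness in the period-$p_\Cc$ transitive case then descends from the mixing case via the averaging recipe in step one.

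The main obstacle in executing this plan is the RPF theorem itself, whose proof in the Lipschitz category is a non-trivial piece of spectral theory. The cleanest route is via Birkhoff's cone contraction: one exhibits a cone of positive Lipschitz functions preserved by $L_\P$ on which some iterate $L_\P^N$ acts as a strict contraction in Hilbert's projective metric, the contraction rate supplying simplicity, isolation and a spectral gap for the leading eigenvalue. Verifying the cone contraction uses topological mixing to ensure strict positivity of a sufficiently high iterate of the operator, together with the Lipschitz regularity of $\P$ to bound the projective diameter of the image cone; these are precisely the two ingredients that make the whole scheme work, which is why the reduction to the mixing case in step one is essential.
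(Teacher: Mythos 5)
The paper does not prove this proposition; it is imported verbatim as Theorem 3.5 of Parry and Pollicott's monograph, so there is no in-paper argument to compare yours against. That said, your sketch is a correct outline of the standard thermodynamic-formalism proof: the period decomposition to reduce to the topologically mixing case, the Ruelle--Perron--Frobenius theorem to produce the eigendata $(\rho,h,\nu)$ and the candidate equilibrium state $h\,d\nu$, the elementary partition/Jensen bound for the variational inequality, Rokhlin's formula for the matching lower bound, and the Gibbs characterisation for uniqueness. One small slip to flag: with the ``$g$-function'' convention you are implicitly using (so that the weights sum to $1$ over preimages and $h(\m_\Cc)=-\int\log J_{\m_\Cc}\,d\m_\Cc$), the Jacobian of the normalised operator is $J_{\m_\Cc}=\rho^{-1}e^{+\P}\,h/(h\circ\sigma)$, not $\rho^{-1}e^{-\P}\,h/(h\circ\sigma)$; with that sign corrected the coboundary telescopes away and one does obtain $h(\m_\Cc)+\int_{\SS_\Cc}\P\,d\m_\Cc=\log\rho$ as you state. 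As a stylistic remark, Parry and Pollicott prove RPF by an iteration/equicontinuity argument on the normalised operator rather than by Birkhoff cone contraction; the cone route you propose is an equally standard alternative and makes no substantive difference to the argument.
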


The pressures can be related to the spectral radii of the following linear operators, known as \textit{transfer operators}. Fix $\P \in F_\theta(\SS_\Cc)$ and define $L_\Cc : F_\theta(\SS_\Cc) \to F_\theta(\SS_\Cc)$ by
\[
L_{\Cc} \o(x) = \sum_{\s(y) = x} e^{\P(y)} \o(y).
\]
Then the spectral radius of this operator is $e^{\Pr_\Cc(\P)}$ and furthermore, this operator has $p_\Cc$ simple maximal eigenvalues: $e^{2\pi i l/p_\Cc}e^{\Pr_\Cc(\P)}$ for $l =1, \ldots, p_\Cc$. The rest of the spectrum is contained in a the disk in  $\C$ centered at $0$ with radius strictly smaller than $e^{\Pr_\Cc(\P)}$ \cite{ParryPollicott}.

When we are considering a Cannon coding with multiple connected components and $\P \in F_\theta(\SS_A)$, we will write
\[
\Pr(\P)=\max_\Cc \Pr_\Cc(\P),
\]
where $\Cc$ runs over all components in $\Gc$.
\begin{definition}
We call a component $\Cc$ {\it maximal} for $\P$ (or \textit{$\P$-maximal}) if $\Pr(\P)=\Pr_\Cc(\P)$.
If $\Cc$ is maximal for the constant function with value $1$ then we say that $\Cc$ is  \textit{word maximal}.
\end{definition}
Note that the word maximal components for a Cannon coding for $\G,S$ are precisely the components
that have spectral radius given by the growth rate of $|\cdot|_S$.
\begin{lemma}[Lemma 4.8 \cite{cantrell.tanaka.1}]\label{lem.holder}
Take a strongly hyperbolic metric $d$ in $\Dc_\G$ with exponential growth rate $ 1$.
Then, for any Cannon coding and associated subshift $\SS$, the function $\P_d : \Sigma \to \Sigma$ defined by $\P_d(x) = \beta_o(\ev_1(x),\ev(x))$ is a H\"older continuous function satisfying the following properties:
\begin{enumerate}
\item $S_n\P_d(x)=\sum_{i=0}^{n-1}\P_d(\s^i(x))= d(o, \ev_n(x))+O(1)$ for all $x \in \SS$;
\item $\Pr(-\P_d) = 0$; and,
\item when $x_0 = \ast$ we have than $S_n\P_d(x) = d(o,\ev_n(x))$.
\end{enumerate}
\end{lemma}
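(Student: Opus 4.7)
The plan is to treat $\P_d$ as (up to $\G$-translation) the infinitesimal increment of the Busemann cocycle along the geodesic path coded by $x \in \SS$, so that (1) and (3) fall out of a telescoping argument, while (2) is extracted from the known orbital counting asymptotic for $d$. I would first establish H\"older continuity. Suppose $x, y \in \SS$ agree in coordinates $0, 1, \dots, N$. Then $\ev_1(x) = \ev_1(y)$, and the two paths $(\ev_k(x))_k, (\ev_k(y))_k$ share a common $|\cdot|_S$-geodesic prefix of length $N$, so by the thin triangle estimate and the quasi-isometry between $|\cdot|_S$ and $d$ the Gromov product $(\ev(x) \mid \ev(y))_o$ computed with respect to $d$ is at least of order $N$. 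Strong hyperbolicity of $d$ then yields H\"older continuity of the Busemann functions $\xi \mapsto \b_o(g, \xi)$ with uniform H\"older constants as $g$ varies; combining these two facts gives $|\P_d(x) - \P_d(y)| \lesssim \theta^N$ for a suitable $\theta \in (0,1)$.

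For property (1), apply the cocycle identity from the preliminaries with $w = o$, $g = g_i := \ev_i(x)$ and $h = g_i^{-1} g_{i+1}$, which rearranges, using $\Gamma$-invariance of $d$, to
\[
\b_o(g_{i+1}, \ev(x)) - \b_o(g_i, \ev(x)) \;=\; \b_o\!\left(g_i^{-1} g_{i+1},\, g_i^{-1}\ev(x)\right) \;=\; \b_o(\ev_1(\s^i x), \ev(\s^i x)) \;=\; \P_d(\s^i x).
\]
Summing $i = 0, 1, \dots, n-1$ telescopes to $S_n \P_d(x) = \b_o(\ev_n(x), \ev(x))$, since $\ev_0(x) = o$ forces $\b_o(\ev_0(x), \ev(x)) = 0$. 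Because every edge path of the Cannon coding is an $|\cdot|_S$-geodesic, the sequence $(g_i)$ is an $(L, C)$-quasi-geodesic in $(\G, d)$ converging to $\ev(x)$; Morse stability of quasi-geodesics together with the cocycle bounds coming from strong hyperbolicity then give $|\b_o(g_n, \ev(x))| = d(o, g_n) + O(1)$, yielding (1).

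For property (3), the restriction $x_0 = \ast$ ensures that the concrete sequence $(\ev_k(x))_k$ begins at $o$ and represents $\ev(x)$, so one may compute the defining limit of $\b_o(\ev_n(x), \ev(x))$ along this particular approximating sequence; the contribution of the initial state eliminates the $O(1)$ slack from the Morse argument and promotes the asymptotic equality of (1) to an exact identity. This is where the augmentation of the Cannon coding and the special role of $\ast$ is used. Finally, for (2), one first decomposes $\SS$ into its components and notes by the variational principle that $\Pr(-\P_d)$ is the exponential growth rate of the partition sum $\sum_{\omega_0 = \ast, |\omega| = n} e^{-S_n \P_d(\omega)}$; by (3) and the bijection between such paths and $\G$, this is $\sum_{g \in \G: d(o,g)\text{ discretised at scale }n} e^{-d(o,g)}$. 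The orbital counting asymptotic from \cite{cantrell} (applicable under the standing assumptions) gives $\#\{g : d(o,g) < T\} \asymp e^{v_d T}$ with $v_d = 1$, which pins the critical exponent, and hence the pressure, to zero.

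The main technical obstacle will be the exactness in (3): one must exploit that the augmented initial state $\ast$ has the structural property that the paths out of it give precisely the geodesic approximations used in defining $\b_o$, so that no rough-geodesic slack is incurred. The H\"older bound in the first step is routine given the H\"older regularity of Busemann functions for strongly hyperbolic metrics, and the pressure identity is essentially a translation of the orbital counting asymptotic into symbolic terms.
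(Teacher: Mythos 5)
The paper does not prove this lemma; it is cited verbatim (modulo possible transcription errors, see below) as Lemma~4.8 of \cite{cantrell.tanaka.1}, so there is no in-paper argument to compare against. On its own terms, your outline contains the right mechanism for (1) — telescoping the Busemann cocycle identity — but there are concrete problems.

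\textbf{Sign of the telescoping.} Your telescoping is correct and gives $S_n\P_d(x)=\b_o(\ev_n(x),\ev(x))$ exactly. But with the paper's Busemann convention $\b_o(g,\xi)=\lim_m\big(d(g,\xi_m)-d(o,\xi_m)\big)$, and since $\ev_n(x)$ lies on the (quasi-)geodesic from $o$ to $\ev(x)$, one gets $\b_o(\ev_n(x),\ev(x))\approx -d(o,\ev_n(x))$, i.e.\ the \emph{negative} of what (1) asserts. You wrote ``$|\b_o(g_n,\ev(x))|=d(o,g_n)+O(1)$'' — the absolute value silently papers over this sign mismatch rather than resolving it. You should have flagged that, as stated, the potential $\P_d(x)=\b_o(\ev_1(x),\ev(x))$ produces $S_n\P_d\approx -d(o,\ev_n(x))$; looking at how the lemma is used (e.g.\ the identity $\sum e^{-sd(o,g)-\cdots}=L^n_{j,s,t}\chi(\dot 0)$ in Proposition~2.9 requires $S_n\P_d=+d(o,\cdot)$), it is clear the intended potential is $\P_d(x)=-\b_o(\ev_1(x),\ev(x))=\b_{\ev_1(x)}(o,\ev(x))$ and the statement has picked up a sign typo in transcription. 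Detecting this sort of inconsistency is part of the job; absorbing it into an absolute value is not a proof step.

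\textbf{Property (3).} ``The contribution of the initial state eliminates the $O(1)$ slack'' is not an argument. The actual mechanism is that, thanks to the augmentation by the absorbing state $0$, sequences coding group elements are eventually constant and $\ev(x)\in\G$, so $\b_o$ is a finite difference $d(\cdot,\ev(x))-d(o,\ev(x))$ with no limiting procedure; the telescoping at (and beyond) the terminal index then gives an exact identity $S_n\P_d(x)=d(o,\ev(x))=d(o,\ev_n(x))$ (with the corrected sign). For indices before the terminal one, or for genuinely infinite $x$, you still have $O(1)$ slack coming from the fact that Cannon paths are $|\cdot|_S$-geodesics and hence only $d$-quasi-geodesics; so (3) as written cannot hold for all $n$ and all $x$ with $x_0=\ast$ by this method, and the precise quantified statement needs care.

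\textbf{Property (2).} You invoke the orbital counting asymptotic of \cite{cantrell} to pin the pressure, but that asymptotic is proved \emph{downstream} of this lemma and the coding machinery — invoking it here is circular. What you should use is the elementary two-sided bound $C_1e^{v_d T}\le\#\{g:d(o,g)<T\}\le C_2e^{v_dT}$ recorded in the preliminaries for every $d\in\Dc_\G$, together with (3) and the bijection between $\ast$-based paths and $\G$, to bound the partition sums $\sum_{|g|_S=n}e^{-d(o,g)}$ above and below by constants; a short growth-quasi-tightness argument then transfers this to the periodic-orbit sums defining the pressure over word-maximal components.
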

We then have the following important result which was shown in \cite{cantrell} and relies on the work in \cite{cantrell.tanaka.2}.
\begin{proposition}\label{prop.mc}
Suppose that $d \in \Dc_\G$ is strongly hyperbolic (with exponential growth rate $1$) and let $\P_d$ be the potential associated to $d$ from Lemma \ref{lem.holder}. Then the $-\Psi_d$ maximal components are precisely the word maximal components. 
\end{proposition}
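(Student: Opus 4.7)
The strategy is to identify precisely the components that achieve the maximum pressure for $-\Psi_d$. For each component $\Cc$ of the Cannon coding, let $s_\Cc \ge 0$ be the unique number satisfying $\Pr_\Cc(-s_\Cc \Psi_d) = 0$ (well-defined since $\Psi_d$ is non-negative up to bounded error by Lemma \ref{lem.holder}(1)). Using standard transfer operator techniques together with Lemma \ref{lem.holder}(1), $s_\Cc$ is precisely the abscissa of convergence of the restricted Poincar\'e series $\sum_g e^{-sd(o,g)}$ summed over those $g \in \G$ whose associated Cannon paths pass through $\Cc$. Since $\Pr(-\Psi_d) = 0$ by Lemma \ref{lem.holder}(2), we have $\max_\Cc s_\Cc = 1$, so $\Cc$ is $-\Psi_d$ maximal if and only if $s_\Cc = 1$. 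The proposition therefore reduces to the equivalence $s_\Cc = 1 \iff h_\Cc = v_S$, where $h_\Cc = \Pr_\Cc(0)$ and $v_S = \max_\Cc h_\Cc$ is the exponential growth rate of $|\cdot|_S$.

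I would establish this equivalence by combining two complementary ingredients. The first is a Calegari-Fujiwara type consistency argument for the word maximal components: by exploiting the combinatorial-dynamical relationships between word maximal components in the Cannon coding (each can be joined to any other via bounded detours coming from the group structure), the map $s \mapsto \Pr_\Cc(-s\Psi_d)$ turns out to agree on all word maximal $\Cc$. In particular, if one word maximal component satisfies $s_\Cc = 1$, then all of them do. The second ingredient is the Mineyev flow suspension coding developed in \cite{cantrell.tanaka.2}: the Mineyev topological flow is realized as a suspension flow over $(\SS_A, \sigma)$ with roof $\Psi_d$, and it has topological entropy equal to $v_d = 1$. Since the flow is topologically transitive (as $\G$ is non-elementary hyperbolic), its measure of maximal entropy is unique, and the symbolic support of this measure can be identified with a union of word maximal components of the Cannon coding.

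Combining these: the Mineyev flow argument forces any component $\Cc$ with $s_\Cc = 1$ to be word maximal, giving the implication $-\Psi_d$ maximal $\Rightarrow$ word maximal. Conversely, since $\max_\Cc s_\Cc = 1$ is achieved by some component, and that component must be word maximal by the previous implication, the Calegari-Fujiwara consistency then propagates $s_\Cc = 1$ to every word maximal component, giving the reverse implication.

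The main obstacle is making precise the identification of the symbolic support of the maximal entropy measure of the Mineyev flow with the word maximal components of the Cannon coding. Because the Cannon coding is generally not connected, one must carefully transfer information from the transitive topological flow to the disconnected symbolic model, which is exactly what the Calegari-Fujiwara-type arguments developed in \cite{cantrell.tanaka.2} and refined in \cite{cantrell} are designed to do. Without these refinements the propagation step can fail, as a priori the equilibrium behavior of one component is not automatically inherited by all components of the same entropy.
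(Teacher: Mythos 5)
The paper gives no inline proof of Proposition~\ref{prop.mc}; it is quoted from \cite{cantrell}, which in turn rests on the Mineyev flow suspension coding of \cite{cantrell.tanaka.2}. So there is no ``paper's own proof'' to compare against line-by-line. That said, your two ingredients --- a Calegari--Fujiwara/growth-quasi-tightness consistency for word maximal components, and the Mineyev flow --- are exactly the tools the introduction names, so the high-level strategy is well targeted. Your ingredient (a) is essentially a real-variable special case of the paper's Lemma~\ref{lem.pressagree}: using Lemma~\ref{lem.gqt} ($F\G_\Cc F = \G$ for a word maximal $\Cc$) one shows that for every real $s$, $\Pr_\Cc(-s\Psi_d)$ equals the global growth rate of $\sum_{|g|_S=n} e^{-sd(o,g)} = \Pr(-s\Psi_d)$, and in particular $\Pr_\Cc(-\Psi_d)=0$. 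Note this already gives the forward implication (word maximal $\Rightarrow$ $-\Psi_d$ maximal) outright, which is a bit stronger than the consistency-only formulation you state.

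The real gap is in the converse, where your argument is circular. The assertion that ``the symbolic support of [the Mineyev flow's MME] can be identified with a union of word maximal components'' is not a consequence of transitivity or uniqueness of the MME --- it is, in substance, the statement you are trying to prove. Transitivity pins down a unique flow-invariant MME $m$, but since the Cannon coding is not transitive, nothing forces the symbolic lift of $m$ to live on a \emph{word maximal} component rather than on some other component $\Cc$ that happens to have $\Pr_\Cc(-\Psi_d)=0$ and smaller word-metric entropy $h_\Cc < v_S$. You flag this yourself as the ``main obstacle'' and then defer it to the cited works, but that is exactly the content of the Proposition. To close this step you need either a structural fact about the factor map (for instance, that the suspension used in \cite{cantrell.tanaka.2} is explicitly built over the word-maximal subgraph, so the lift of $m$ is supported there by construction), or an independent argument --- e.g., pushing $m$ forward to a measure on $\partial\G$ and showing that its Hausdorff/Patterson--Sullivan-type description forces full word-growth of the supporting component. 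As written, the proposal correctly identifies the strategy but does not contain a proof of the hard direction.
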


We will need to exploit an additional combinatorial property of hyperbolic groups known as \emph{growth quasi-tightness}.   Fix a finite generating set $S$ for $\G$ a group element $\o \in \G$ and a real number $\D \ge 0$. We say that an $S$-geodesic word \textit{$\D$-contains $\o$}  if it contains a subword $\widetilde{\o}$ of the form $\widetilde{\o} = f_1\o f_2$ for some $f_1, f_2 \in \Gamma$ with $|f_1|_S, |f_2|_S \le \D$. Let $Y_{w,\D}$ be the set of group elements $x \in \G$ for which $x$ can be  written as an $S$-geodesic word which does not $\D$-contain $\o$. We then have the following theorem of Arzhantseva and Lysenok.
  
 \begin{theorem} [Theorem 3 of \cite{arzlys}]
Hyperbolic groups are growth quasi-tight. That is, given a hyperbolic group and  any generating set $S$
  there exists $\D_0> 0$ such that for any $\o \in \Gamma$,
  \[
  \lim_{n\to \infty} \frac{\#(Y_{\o,\D_0} \cap S_n)}{\#S_n}=0.
  \]
  Here $S_n = \{ x \in \G: |x|_S = n\}$ are the group elements in $\G$ of $S$ length $n$.
  \end{theorem}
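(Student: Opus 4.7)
The plan is to leverage the Cannon automatic structure discussed earlier and reduce the counting problem to an entropy comparison between a subshift of finite type and a proper sub-subshift obtained by forbidding a finite list of patterns.

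Fix a Cannon coding $\mathcal{G}$ for $(\Gamma, S)$ and recall that elements of $S_n$ are in bijection with directed paths of length $n$ starting at $\ast$, and that the growth rate of $\#S_n$ coincides with the exponential growth rate of the word-maximal components of $\mathcal{G}$. The first step is to show that if an element $x \in S_n$ is not $\Delta_0$-containing $\omega$ then the associated Cannon path, after truncating at most $\Delta_0$ steps on each end, lies entirely in a sub-graph obtained from $\mathcal{G}$ by deleting finitely many subpaths; the precise subpaths to delete are those whose labels in $\Gamma$ are geodesic representatives of the form $f_1\omega f_2$ with $|f_1|_S,|f_2|_S \le \Delta_0$. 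Since the total number of such subpaths is finite, the set of Cannon paths avoiding all of them gives rise to a sub-shift of finite type $\widetilde{\Sigma} \subset \Sigma_A$, and it suffices to show that $\widetilde{\Sigma}$ has strictly smaller topological entropy than every word-maximal component of $\Sigma_A$.

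The core geometric lemma is this: for any $\omega \in \Gamma$ and any word-maximal component $\mathcal{C}$ of $\mathcal{G}$, there exists $\Delta_0$ depending only on the hyperbolicity constant $\delta$ of $(\Gamma,S)$ and a closed loop $\gamma_\omega$ in $\mathcal{C}$ whose label in $\mathrm{Cay}(\Gamma,S)$ spells a geodesic word of the form $f_1 \omega f_2$ with $|f_1|_S,|f_2|_S \le \Delta_0$. The intuition is that $\omega$ can always be inserted into a long geodesic inside $\mathcal{C}$ by a bounded amount of padding: pick any vertex $v \in \mathcal{C}$, choose an element $u \in \Gamma$ represented by a path from $\ast$ to $v$, and consider the group element $u \omega u^{-1}$; using $\delta$-hyperbolicity, a geodesic from $o$ to $uu\omega u^{-1}u$ (for an appropriate repetition) fellow-travels the concatenation $u \cdot \omega \cdot (\text{return})$ up to error $\Delta_0(\delta)$, and the corresponding geodesic spelling furnishes a closed loop in $\mathcal{C}$ of the desired form. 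This is the step where hyperbolicity enters crucially and where Proposition \ref{prop.mc} (which guarantees uniformity across all word-maximal components) is used to ensure the loop exists in every word-maximal $\mathcal{C}$, not just some of them.

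Granted the loop $\gamma_\omega$ in each word-maximal $\mathcal{C}$, the conclusion is a standard application of Perron-Frobenius theory: the subshift $\widetilde{\Sigma}_{\mathcal{C}}$ obtained from $\Sigma_{\mathcal{C}}$ by forbidding the finite patterns corresponding to all admissible insertions of $\gamma_\omega$ is a proper topologically transitive subshift of $\Sigma_{\mathcal{C}}$, and hence its topological entropy $\widetilde{v}_{\mathcal{C}}$ is strictly less than the entropy $v_{\mathcal{C}}$ of $\Sigma_{\mathcal{C}}$. Since this strict inequality holds for every word-maximal component, and since $\#S_n$ is (up to a constant) the sum of the path counts in the word-maximal components, one obtains
\[
\#(Y_{\omega,\Delta_0} \cap S_n) \le C\, e^{\widetilde{v} n}
\qquad \text{with } \widetilde{v} < v_S = \lim_{n\to\infty} \tfrac{1}{n}\log \#S_n,
\]
and the ratio $\#(Y_{\omega,\Delta_0} \cap S_n)/\#S_n$ decays exponentially, in particular tending to zero.

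The main obstacle is establishing the geometric lemma above: producing the loop $\gamma_\omega$ in \emph{every} word-maximal component with a constant $\Delta_0$ independent of $\omega$. Without Proposition \ref{prop.mc} one would only obtain such a loop in \emph{some} word-maximal component, which would not suffice since we need the forbidden patterns to remove entropy uniformly across all components contributing to $\#S_n$. The input from \cite{cantrell.tanaka.2} and \cite{cantrell} that all word-maximal components share the same growth/entropy and are geometrically linked through the Mineyev flow coding is therefore the key technical ingredient that makes the argument go through with a single uniform $\Delta_0$.
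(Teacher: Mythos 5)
This statement is cited in the paper as Theorem~3 of Arzhantseva--Lysenok and is \emph{not} proved there; the paper treats it as a black box and immediately derives Lemma~\ref{lem.gqt} from it. There is therefore no ``paper's own proof'' to line your argument up against. Evaluating your attempt on its own terms, it proposes a genuinely different route (Cannon coding plus a Perron--Frobenius entropy-drop argument) from the direct geometric argument in the cited source, but as written it does not close.

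The central gap is your ``core geometric lemma'': the existence, for every $\omega$ and \emph{every} word-maximal component $\Cc$, of a closed loop in $\Cc$ whose label reads $f_1\omega f_2$ with $|f_1|_S,|f_2|_S\le\Delta_0$ and $\Delta_0$ independent of $\omega$. This assertion is essentially equivalent to the paper's Lemma~\ref{lem.gqt} (that $F\G_\Cc F=\G$ for a finite $F$ independent of $\Cc$), and the paper obtains that lemma \emph{as a consequence} of the Arzhantseva--Lysenok theorem via \cite{GMM2018}. Without an independent proof of this insertion property, your argument is circular. Moreover, the tool you invoke to supply it --- Proposition~\ref{prop.mc} --- cannot do that job: it is a statement about the $-\Psi_d$-pressures on the various components coinciding, i.e.\ about equality of growth rates, and says nothing about the combinatorial fact that every group element can be padded into a loop inside any chosen word-maximal component. (Indeed, one of the motivations for the Mineyev-flow machinery in \cite{cantrell.tanaka.2} is precisely that entropy equality alone is too weak to control the component structure.) You would also need a small additional argument, which is missing, to pass from ``$x$ has \emph{some} geodesic representative that avoids $\omega$'' (the definition of $Y_{\omega,\Delta}$) to ``the unique Cannon representative of $x$ avoids $\omega$'' before the subshift count applies; this can be repaired by enlarging $\Delta$ by $O(\delta)$ using fellow-travelling, but it must be said. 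The granted-the-lemma portion of your argument (forbid finitely many words, obtain a proper transitive subshift, invoke strict entropy decrease, sum over word-maximal components) is correct and standard, so the entire burden rests on the missing combinatorial lemma, which is in fact the whole content of the theorem.
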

It is crucial that $\D_0$ does not depend on $\o$. As an immediate consequence we deduce the following result.
 
 \begin{lemma} \label{lem.gqt}
 Fix a word maximal component $\Cc$ in a Cannon coding (for a hyperbolic group $\G$ and generating set $S$). Let $\G_\Cc$ denote the collection of group elements that correspond to a path in $\Cc$. That is, $\G_\Cc$ contains the elements in $\G$ that correspond to multiplying the edge labelings along a path in $\Cc$. Then there exists a finite set $F \subset \G$ such that $F \G_\Cc F = \G$.
 \end{lemma}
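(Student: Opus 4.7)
My plan is to apply the Arzhantseva--Lysenok growth quasi-tightness theorem with a carefully chosen test element $\omega \in \G_\Cc$ and exploit the bijection between group elements and paths in the Cannon coding. Let $\Delta_0 > 0$ denote the uniform constant provided by that theorem. The overall idea is to pick $\omega$ whose presence as a subword of a geodesic for $g$ forces a bounded-adjustment factorisation of $g$ through $\G_\Cc$.

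First, since $\Cc$ is a strongly connected subgraph of a finite directed graph, I would fix a vertex $v_0 \in V(\Cc)$ and select a closed loop in $\Cc$ based at $v_0$, giving an element $\omega \in \G_\Cc$ with $|\omega|_S$ much larger than $\Delta_0$ and than the number of vertices in $V$. I would then establish the following key claim: whenever a geodesic word for $g$ has the form $a \cdot f_1 \omega f_2 \cdot b$ with $|f_1|_S, |f_2|_S \le \Delta_0$, one may write $g = h_1 \gamma h_2$ with $\gamma \in \G_\Cc$ and $h_1, h_2$ drawn from a fixed finite set of ``connector'' elements depending only on $\Delta_0$ and the Cannon graph. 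The idea is that the $\omega$-subword pins down a portion of $g$'s Cannon path that traverses $\Cc$; the strong connectedness of $\Cc$ together with the uniqueness of paths through any given vertex in the Cannon coding then lets one route the boundary portions back inside $V(\Cc)$ via bounded-length adjustments, absorbing the ``excess'' into the finite connector set while the central portion lies entirely in $\G_\Cc$.

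The principal obstacle is that growth quasi-tightness only yields a density-$1$ statement over $S_n$, whereas the lemma demands a universal statement for every $g \in \G$. To bridge this gap, I would apply the theorem to a finite family $\omega_1, \ldots, \omega_k \in \G_\Cc$ of loops of varying lengths based at the different vertices of $V(\Cc)$, arranged so that any sufficiently long $g$ must $\Delta_0$-contain at least one $\omega_i$ in each geodesic representative. The remaining elements with $|g|_S$ below a threshold form a finite set and can be absorbed into $F$ by enlarging it to include a sufficiently large word ball. Combining these steps yields the required finite set $F$ with $F \G_\Cc F = \G$.
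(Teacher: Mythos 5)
Your proposal has two genuine gaps, both of which are serious.

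First, your ``key claim'' --- that a geodesic representative of $g$ of the form $a\cdot f_1\omega f_2\cdot b$ forces a factorisation $g = h_1\gamma h_2$ with $\gamma\in\G_\Cc$ and $h_1,h_2$ from a fixed finite connector set --- is asserted rather than proved, and as stated it is not correct. The prefix $a$ and suffix $b$ are arbitrary geodesic segments that can be as long as you like and have nothing a priori to do with $\Cc$; they cannot be ``absorbed'' into a finite connector set, nor can they be ``routed back inside $V(\Cc)$'' by bounded adjustments. For the factorisation to hold, the long middle element $\gamma\approx a f_1\omega f_2 b$ would itself have to lie in $\G_\Cc$, i.e.\ be traceable by a path entirely inside $\Cc$, and nothing in your argument forces the $a$- and $b$-portions to be readable in $\Cc$. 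There is also no ``uniqueness of paths through a given vertex'' in a Cannon coding: vertices may have several in-edges and out-edges, so the $\omega$-subword does not pin down which vertices of $\Gc$ the Cannon path for $g$ visits at that position. Any correct version of this step requires a genuine fellow-travelling argument linking arbitrary geodesics to the Cannon path and a careful analysis of how the Cannon graph sits over $\Cay(\G,S)$; it is not a formal consequence of strong connectedness of $\Cc$.

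Second, and more fundamentally, the bridge from a density-$1$ statement to the universal statement is not sound. Growth quasi-tightness asserts only that $\#(Y_{\omega,\D_0}\cap S_n)/\#S_n\to 0$; it does not assert that $Y_{\omega,\D_0}\cap S_n$ is eventually empty, nor that one can arrange finitely many $\omega_1,\dots,\omega_k$ so that $\bigcap_i Y_{\omega_i,\D_0}$ is finite. An intersection of finitely many density-zero sets is still only density zero and can perfectly well be infinite, with elements of unbounded word length; these cannot be absorbed into a finite $F$ by ``enlarging $F$ to a large word ball.'' Your sentence ``arranged so that any sufficiently long $g$ must $\D_0$-contain at least one $\omega_i$ in each geodesic representative'' is exactly the content one would need, but it does not follow from the theorem you invoke, and you give no mechanism to achieve it. The paper itself treats this lemma as a consequence of the Arzhantseva--Lysenok theorem but defers the actual argument to Lemma 4.6 of \cite{GMM2018} and Section~4.5 of \cite{cantrell.tanaka.1}; the real proof there is substantially more delicate than the sketch you give, precisely because of this density-versus-universality obstruction.
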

This was observed in the proof of Lemma 4.6 in \cite{GMM2018},  see Section 4.5 of \cite{cantrell.tanaka.1} for more details.

Now take a pair $d_\ast, d \in \Dc_\G$ of non-roughly similar, strongly hyperbolic metrics satisfying the assumptions introduced at the start of this section.  Fix a generating set $S$ for $\G$ and a Cannon coding as introduced above. Let $\Phi$ be the potential in the coding given by $\Psi_{d_\ast} - \Psi_d$ where $\Psi_d, \Psi_{d_\ast}$ are obtained as in Lemma \ref{lem.holder}. We will write $\Cc_j$, $j=1,\ldots, m$ for the word maximal  
(equivalently $-\Psi_d$ maximal) components.

We define transfer operators for each maximal component $\mathcal{C}_j$. Let $p_j$ denote the period of the component $\Cc_j$. Suppose the entire Cannon coding is described by the matrix $A$. Let, for $j=1, \ldots, m$ the matrix $C_j$ be the matrix with the same dimensions and entries as $A$ except for the fact that each word maximal component that is not $\mathcal{C}_j$ is replaced by the zero matrix. We then define transfer operators $L_{j,s, t}: F_\theta(\Sigma_{C_j}) \to F_\theta(\Sigma_{C_j}) $ by
\[
L_{j,s,t} f(x) = \sum_{\sigma(y) = x, y \neq \dot{0}} e^{-s\Psi_d(y) - t\Phi(y)} f(y)
\]
for $s, t \in \C$. Recall that $\dot{0} \in \SS$ is the sequence consisting only of $0$s. Note that by construction each $C_j$ has a unique maximal component for $- \Psi_d$. The following proposition records some of the important properties of these operators.

\begin{proposition}\label{prop.sr}
For each $j=1,\ldots, m$ the operators $L_{j,s,t}$ satisfy the following.
\begin{enumerate}
\item Let $\chi \in F_\theta(\SS)$ be the indicator function on the one-cyclinder for the initial vertex $\ast$ in the Cannon coding. Then we have that 
\[
\sum_{|g|_S \in M_j(n)} e^{-s d(o,g) - t(d_\ast(o,g) - d(o,g))} = L_{j,s,t}^n \chi(\dot{0})
\]
where for $n\ge 1$,  $M_j(n) \subset \{g\in\G: |g|_S = n\}$ is the collection of group elements in $\G$ whose corresponding path in $\Gc$ starting at $\ast$ does not enter a word maximal component that is not $\Cc_j$.
\item For $s$ with $\mathrm{Re}(s) > 1 = v_d$, the spectral radius of each $L_{j,s,0}$ is strictly smaller than $1$.
\item There exits $\epsilon > 0$ such that for all $|s - 1| < \epsilon, |t| < \epsilon$ the spectrum of each operator $L_{j,s,t}$ is of the following form: $L_{j,s,t}$ has $p_j$ simple maximal eigenvalues of the form $e^{\Pr_j(-s\P_d - t\Phi)} e^{2\pi i l\p_j}$ and the rest of the spectrum is contained in a disk $\{|s| < \rho < 1\}$ for some $\rho$ independent of $|s-1|<\epsilon, |t| < \epsilon$. Here $\Pr_j(-s\P_d - t\Phi)$ is a bi-analytic extension to $|s-1|<\epsilon, |t| < \epsilon$ of the pressure obtained from the variational principle applied to a fixed word maximal component $\Cc_j$. 
\end{enumerate}
\end{proposition}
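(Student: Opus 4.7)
My plan is to treat the three parts as progressively deeper applications of thermodynamic formalism for the holomorphic family $L_{j,s,t}$: a symbolic unfolding for (1), a modulus-bound plus pressure-monotonicity argument for (2), and Ruelle-Perron-Frobenius theory together with Kato's analytic perturbation theorem for (3).

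For (1), I would iterate the defining sum to expand $L_{j,s,t}^n \chi(\dot{0})$ as a weighted sum over sequences $y \in \Sigma$ with $\sigma^n(y) = \dot{0}$ and $\sigma^k(y) \neq \dot{0}$ for $0 \le k < n$, with the factor $\chi(y)$ restricting to $y_0 = \ast$. The weight at such $y$ is $\exp(-s S_n \Psi_d(y) - t S_n \Phi(y))$, and the support of the matrix $C_j$ restricts the intermediate vertices so as to avoid every word-maximal component other than $\Cc_j$. By Lemma \ref{lem.holder}(3), for sequences starting at $\ast$ these Birkhoff sums equal exactly $d(o,\ev(y))$ and $(d_\ast - d)(o,\ev(y))$, and the bijection property of the Cannon coding then converts the symbolic sum into the claimed Poincar\'e-type sum over $M_j(n)$.

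For (2), the standard pointwise majorization $|L_{j,s,0}f| \le L_{j,\mathrm{Re}(s),0}|f|$ bounds the spectral radius at complex $s$ by the spectral radius of the real operator $L_{j,\mathrm{Re}(s),0}$, which equals $e^{\Pr_j(-\mathrm{Re}(s)\Psi_d)}$ since $\Cc_j$ is by construction the unique $-\Psi_d$-maximal component of the matrix $C_j$. Combining Lemma \ref{lem.holder}(2) with Proposition \ref{prop.mc} gives $\Pr_j(-\Psi_d) = 0$, and the strict monotonicity $\frac{d}{ds}\Pr_j(-s\Psi_d)|_{s=1} = -\int \Psi_d \, d\mu_j < 0$ (the integral being strictly positive since $\Psi_d$ is cohomologous to the positive geodesic increment, the mean distortion in the variational expression being strictly positive) forces $\Pr_j(-s\Psi_d) < 0$ for real $s > 1$, hence the result on the full half-plane $\mathrm{Re}(s) > 1$.

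For (3), at the base point $(1,0)$ the operator $L_{j,1,0}$ restricted to the transitive component $\Cc_j$ of period $p_j$ has the classical Ruelle-Perron-Frobenius spectral decomposition: $p_j$ simple maximal eigenvalues equal to the $p_j$-th roots of unity, separated by a strict gap from the remainder of the spectrum, which is strictly contained in the open unit disk. I would then invoke Kato's analytic perturbation theorem for the holomorphic family $(s,t) \mapsto L_{j,s,t}$ on $F_\theta(\SS_{C_j})$, which propagates both the simplicity of the leading eigenvalues and the spectral gap to a small complex neighborhood of $(1,0)$ and gives bi-analytic dependence of the perturbed maximal eigenvalues on $(s,t)$. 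The identification of these deformed eigenvalues with $e^{2\pi i l/p_j}e^{\Pr_j(-s\Psi_d - t\Phi)}$ comes by matching on the real axis, where Ruelle's theorem identifies the top eigenvalue with the exponential of the pressure, and by bi-analytic continuation. I expect the main obstacle to be verifying that the non-maximal components present inside $C_j$ do not spoil this perturbative picture; uniformity of the sub-leading spectral bound on a complex neighborhood is exactly where uniqueness of $\Cc_j$ as the $-\Psi_d$-maximal component of $C_j$ is essential, and it is precisely this spectral gap at the unperturbed operator that survives small complex perturbations by analytic continuity.
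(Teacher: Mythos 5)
Since the paper's own proof of this proposition is a single-line citation to \cite[Proposition~4.3]{cantrell}, your blind write-up is supplying the details that the authors defer. Your three-step strategy (symbolic unfolding for (1), complex-modulus plus pressure-monotonicity for (2), RPF plus Kato for (3)) is the natural way to establish these facts, and the overall argument is sound.

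One small imprecision is worth flagging in (2). You assert that the spectral radius of the real operator $L_{j,\mathrm{Re}(s),0}$ equals $e^{\Pr_j(-\mathrm{Re}(s)\Psi_d)}$ because $\Cc_j$ is the unique $-\Psi_d$-maximal component of $C_j$. That identification is guaranteed only at $\mathrm{Re}(s)=1$; as $\mathrm{Re}(s)$ varies the achieving component of $\max_{\Cc}\Pr_{\Cc}(-\mathrm{Re}(s)\Psi_d)$ could in principle change, and the spectral radius on $F_\theta(\Sigma_{C_j})$ is governed by that maximum over all irreducible components of $C_j$, not by $\Cc_j$ alone. The conclusion still follows and the fix is immediate: since $S_n\Psi_d$ grows linearly (it approximates $d(o,\ev_n(\cdot))$ within $O(1)$ and the Cannon paths are $S$-geodesics), one has $\int\Psi_d\,d\mu>0$ for every invariant measure, so $u\mapsto\Pr_{\Cc}(-u\Psi_d)$ is strictly decreasing on each component $\Cc$ of $C_j$; combined with $\Pr_{\Cc}(-\Psi_d)\le\Pr_j(-\Psi_d)=0$ for every $\Cc$, this gives $\max_{\Cc}\Pr_{\Cc}(-u\Psi_d)<0$ for all real $u>1$ regardless of which component achieves the maximum. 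Also note that the derivative bound needs to hold on the whole ray $u>1$, not only at $u=1$ as your phrasing suggests, but the same positivity of $\int\Psi_d\,d\mu_{j,u}$ gives that for free. With that adjustment your proof of (2) is correct, and your treatment of (1) and (3), including the observation that the non-maximal components of $C_j$ sit strictly inside the unit disk and hence do not disturb the isolated leading eigenvalues under a small complex perturbation, is exactly the argument one would want.
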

\begin{proof}
This is essentially the same as \cite[Proposition 4.3]{cantrell} we leave the details to the reader.
\end{proof}
In part $(3)$ above the quanitites $\Pr_j(-s\P_d - t \Psi)$ represent the pressures of the potential $-s\P_d - t \Psi$ over the different maximal components $\Cc_j$.
We then have the following crucial result which can be seen as a multivariable generalisation of Proposition \ref{prop.mc}.
\begin{lemma}\label{lem.pressagree}
There is an open neighbourhood $U \times V \subset \mathbb{C}^2$ of $(1,0)$ such that for\ $(s,t) \in U \times V$ we have that
\[
\textnormal{P}_i(-s\Psi_d - t\Phi) = \textnormal{P}_j( -s\Psi_d - t\Phi)
\]
for each $i,j \in \{1,\ldots, m\}$, i.e. the pressure of $-s\Psi_d - t\Phi$ over the word maximal or equivalently $-\P_d$-maximal components agree.
\end{lemma}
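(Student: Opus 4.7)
The plan is to pass from the base point $(s,t) = (1,0)$ to a complex neighborhood in two stages: first establish the identity for real parameters by a Calegari--Fujiwara style argument based on growth quasi-tightness, and then extend to a complex polydisc by unique analytic continuation.

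At $(s,t) = (1,0)$ the identity is already available: Lemma \ref{lem.holder}(2) yields $\textnormal{P}(-\Psi_d) = 0$, and Proposition \ref{prop.mc} identifies the word maximal components with the $-\Psi_d$-maximal components, so $\textnormal{P}_j(-\Psi_d) = 0$ for every $j$. For real $(s,t)$ close to $(1,0)$, fix $j$ and apply Lemma \ref{lem.gqt} to obtain a finite $F \subset \G$ with $F \G_{\Cc_j} F = \G$. Choosing one decomposition $g = f_1 h f_2$ per $g \in \G$ yields an injection $g \mapsto (f_1, h, f_2) \in F \times \G_{\Cc_j} \times F$, and since $|d(o,g) - d(o,h)|$ and $|d_\ast(o,g) - d_\ast(o,h)|$ are bounded in terms of $F$ only, there exist constants $C, K > 0$ (uniform on a real neighborhood of $(1,0)$) with
\[
\sum_{|g|_S = n} e^{-s d(o,g) - t(d_\ast - d)(o,g)} \;\le\; C \sum_{\substack{h \in \G_{\Cc_j} \\ |h|_S \in [n-K, n+K]}} e^{-s d(o,h) - t(d_\ast - d)(o,h)}.
\]
The left-hand side equals $L_{s,t}^n \chi(\dot 0)$ for the full transfer operator and grows at exponential rate $e^{\max_i \textnormal{P}_i(-s\Psi_d - t\Phi)}$, while the right-hand side is a weighted sum over paths staying inside $\Cc_j$ and grows at rate $e^{\textnormal{P}_j(-s\Psi_d - t\Phi)}$ by standard thermodynamic formalism on the transitive sub-system for $\Cc_j$, together with Proposition \ref{prop.sr}(1),(3). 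Comparing growth rates gives $\max_i \textnormal{P}_i \le \textnormal{P}_j$; since the reverse inequality is immediate, all the $\textnormal{P}_j$ coincide for real $(s,t)$ in a neighborhood of $(1,0)$.

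For the complex extension, Proposition \ref{prop.sr}(3) furnishes bi-analytic extensions of each $\textnormal{P}_j(-s\Psi_d - t\Phi)$ to a connected polydisc $U \times V \subset \mathbb{C}^2$ about $(1,0)$. Two bi-analytic functions on a connected open subset of $\mathbb{C}^2$ that coincide on its totally real slice must coincide on the whole set, so (shrinking $U \times V$ if necessary) this yields the claimed equality.

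I expect the real-parameter Calegari--Fujiwara step to be the main obstacle. The delicate points are that $F$ from Lemma \ref{lem.gqt} is chosen once, independently of the particular $g$ being decomposed, so the comparison constants $C, K$ remain uniform in $n$; and that the exponential asymptotic $L_{j,s,t}^n \chi(\dot 0) \asymp e^{n \textnormal{P}_j(-s\Psi_d - t\Phi)}$ must hold with constants uniform in a real neighborhood of $(1,0)$, which is exactly what the simple leading eigenvalue structure guaranteed by Proposition \ref{prop.sr}(3) provides.
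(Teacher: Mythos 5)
Your proposal follows essentially the same two-stage strategy as the paper: establish the equality of pressures for real $(s,t)$ near $(1,0)$ by comparing the orbital sum $\sum_{|g|_S = n} e^{-sd(o,g) - t(d_\ast - d)(o,g)}$ to a sum supported on a single word maximal component via growth quasi-tightness (Lemma \ref{lem.gqt}), then upgrade to a complex polydisc by invoking bi-analyticity of the pressure and the identity theorem on the totally real slice. The only cosmetic difference is that the paper phrases the component-side quantity as a sum over periodic orbits in $\Sigma_j$ (and notes the map $g\mapsto \widetilde g = f_1 g f_2$ is at most $\#F^2$ to one), whereas you pass through $\G_{\Cc_j}$ directly; these yield the same exponential growth rate and the argument is the same.
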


\begin{proof}
We first prove the lemma assuming that $s$ and $t$ are real (i.e. that $U,V$ are sets of real numbers) and then deduce the general case. We have that for each $j = 1,\ldots, m$ we have that $\Pr_j(-s\Psi_d - t\Phi)$ is the exponential growth rate
\[
\Pr_j(-s\Psi_d - t\Phi) = \limsup_{n\to\infty} \frac{1}{n} \log \left(\sum_{\sigma^{n}(x) = x : x \in \Sigma_j} e^{-s\Psi^n_d(x) - t\Phi^n(x)}\right).
\]
 We claim that each of these growth rates are the same as the exponential growth rate
\[
 \limsup_{n\to\infty} \frac{1}{n} \log \left( \sum_{|g|_S = n} e^{-sd(o,g) - t(d_\ast(o,g) - d(o,g) )}\right).
\]
This implies our desired conclusion as this latter quantity is independent of $\mathcal{C}_j$.

To prove the claim we first observe the elementary inequality
\[
\sum_{\sigma^{n}(x) = x : x \in \Sigma_j} e^{-s\Psi^n_d(x) - t\Phi^n(x)} \le C_{s,t} \sum_{|g|_S = n} e^{-sd(o,g) - t(d_\ast(o,g) - d(o,g) )}
\]
which holds for each $n\ge 1$ where $C_{s,t}$ is a constant depending only on $s$ and $t$. This follows from Lemma \ref{lem.holder} and the properties of the Cannon coding.

For the other inequality we use that hyperbolic groups are growth quasi-tight. Fix $j \in \{1,\ldots,m\}$. By Lemma \ref{lem.gqt} there exists a finite subset $F \subset \Gamma$ such that for any $g \in \Gamma$ we can find a loop $\sigma^n(x) = x$ in $\Sigma_j$ such that the group element $\widetilde{g} \in\G$ corresponding to multiplying the $n$ labels in $x$ is equal to $f_1gf_2$ for some $f_1, f_2 \in F$.  Then the association $g \mapsto \widetilde{g}$ is at most $\# F^2$ to $1$. Also we have that $|\widetilde{g}|_S = |g|_S + O(1), d(o,\widetilde{g}) = d(o,g) + O(1), d_\ast(o,\widetilde{g}) = d_\ast(o,g) + O(1)$ where the error terms are independent of $g$. Hence we see that there are $R \in \mathbb{Z}_{\ge 0}$ and $M_{s,t} >0$ depending only on $s$ and $t$ such that
\[
\sum_{|g|_S = n} e^{-sd(o,g) - t(d_\ast(o,g) - d(o,g) )} \le M_{s,t} \sum_{i= -R}^R \sum_{\sigma^{n+i}(x) = x : x \in \Sigma_j} e^{-s\Psi^{n+i}_d(x) - t\Phi^{n+i}(x)}
\]
for each $n \ge 1$. It follows easily from basic results in thermodynamic formalism that the right hand side of the previous inequality has exponential growth rate $\Pr_j(-s\P_d - t\Psi)$. Hence, the two inequalities then conclude the proof.

We now need to deduce the general case. Standard facts from analytic perturbation theory guarantee that there is an open neighbourhood $U \times V \subset \mathbb{C}^2$ of $(1,0)$ such that  the pressures $(s,t) \mapsto \Pr_j(-s\Psi_d - t\Phi)$ are bi-analytic in this neighbourhood. When $(s,t) \in U \times V$ and $s,t\in \mathbb{R}$ we know that the pressure functions $\Pr_j(-s\Psi_d - t\Phi)$ coincide. However since these pressures are bi-analytic on $U \times V$ (and so can be expressed in the form of a $2$ variable Taylor expansion which is calculated from the partial derivatives at $(1,0)$) they are determined by their values on the real values in $(s,t) \in U \times V$ and this concludes the proof.
\end{proof}
This lemma shows that, restricting to a neighbourhood of $(1,0)$,  the pressure functions $\textnormal{P}_i(-s\Psi_d - t\Phi)$ for $i=1,\ldots, m$ coincide. This fact will be vital in the upcoming analysis. Moving forward we will write $P(s,t)$ for this function and define $\lambda(s,t) = e^{P(s,t)}$.
\begin{lemma}\label{lem.deriv}
For $\lambda(s,t)$ as above we have that
\[
\lambda_{s}(1,0) < 0, \  \lambda_{t}(1,0) = 0 \ \text{ and } \ \lambda_{t t}(1,0) > 0.
\]
\end{lemma}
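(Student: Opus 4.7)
The plan is to reduce each assertion to a statement about $P(s,t) = \log\lambda(s,t)$ and then to combine the standard derivative-of-pressure formula with the implicit function theorem applied to the Manhattan curve. Since $P(1,0) = \Pr(-\Psi_d) = 0$ by Lemma \ref{lem.holder}(2) and our normalisation $v_d = 1$, we have $\lambda(1,0) = 1$. Differentiating $\lambda = e^P$ gives $\lambda_s(1,0) = P_s(1,0)$, $\lambda_t(1,0) = P_t(1,0)$, and $\lambda_{tt}(1,0) = P_t(1,0)^2 + P_{tt}(1,0)$. Thus once $P_t(1,0) = 0$ is known the problem reduces to the three inequalities $P_s(1,0) < 0$, $P_t(1,0) = 0$, $P_{tt}(1,0) > 0$.

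For the first, I would fix any word-maximal component $\Cc_j$ (all of which carry the same analytic pressure by Lemma \ref{lem.pressagree}) and apply the standard derivative-of-pressure formula to write $P_s(1,0) = -\int \Psi_d\, d\mu$, where $\mu$ is the equilibrium state for $-\Psi_d$ on $\Sigma_j$. Using $S_n\Psi_d(x) = d(o,\ev_n(x)) + O(1)$ from Lemma \ref{lem.holder}(1) together with the fact that $d$ is quasi-isometric to $|\cdot|_S$, the Birkhoff ergodic theorem yields $\int \Psi_d\, d\mu = \lim_n n^{-1} d(o,\ev_n(x)) > 0$ for $\mu$-a.e.\ $x$, whence $P_s(1,0) < 0$.

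For the remaining two claims, I would exploit the fact that the zero locus of $P$ near $(1,0)$ coincides with the Manhattan curve. Rewriting $P(s,t) = \Pr(-(s-t)\Psi_d - t\Psi_{d_\ast})$ identifies $\{P(s,t) = 0\}$ locally with the graph $s = t + \theta_{d_\ast/d}(t)$; since $P_s(1,0)\neq 0$ the implicit function theorem applies. First-order implicit differentiation gives $ds/dt = -P_t/P_s$, and at $t=0$ this must equal $1 + \theta'_{d_\ast/d}(0) = 1 - \tau(d_\ast/d) = 0$ by Theorem \ref{thm.equiv} and the standing normalisation $\tau = 1$; combined with $P_s(1,0)\neq 0$ this forces $P_t(1,0) = 0$. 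Differentiating a second time yields $P_{ss}(s')^2 + 2 P_{st} s' + P_s s'' + P_{tt} = 0$, which at $(1,0)$ — where $s'(0)=0$ and $s''(0) = \theta''_{d_\ast/d}(0)$ — collapses to $P_{tt}(1,0) = -P_s(1,0)\,\theta''_{d_\ast/d}(0)$. The strict convexity statement recalled in Section \ref{sec.rigid} (applicable because $d$ and $d_\ast$ are not roughly similar) gives $\theta''_{d_\ast/d}(0) > 0$, and combined with $P_s(1,0) < 0$ this delivers $P_{tt}(1,0) > 0$.

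The point requiring the most care is the first step: the derivative-of-pressure formula is a statement about the pressure on a single mixing component, so I need to know that the globally defined analytic function $P(s,t)$ of Proposition \ref{prop.sr} agrees with the pressure on any chosen word-maximal component. This is exactly what Lemma \ref{lem.pressagree} supplies, so no further work is needed beyond invoking that lemma.
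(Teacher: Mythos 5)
Your proof is correct, and for two of the three inequalities it takes a genuinely different route from the paper's. For $\lambda_s(1,0)<0$ you match the paper's argument (derivative-of-pressure formula plus the quasi-isometry between $d$ and $|\cdot|_S$, together with Lemma~\ref{lem.pressagree} to know the pressure on each word-maximal component is the same function). For the other two claims the paper works directly at the level of pressure derivatives: it computes $\lambda_t(1,0)=\int\Phi\,d\mu_\Cc$ and uses the identity $\tau(d_\ast/d)=\int\Psi_{d_\ast}\,d\mu_\Cc/\int\Psi_d\,d\mu_\Cc$ from \cite{cantrell.tanaka.2} to force the integral to vanish, and it identifies $\lambda_{tt}(1,0)$ with the asymptotic variance of the Birkhoff sums of $\Phi$, then rules out degeneracy by a Livsic-type cohomology argument that feeds into the translation-length rigidity statement of Theorem~\ref{thm.equiv}. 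You instead treat the zero locus of $P$ as the Manhattan curve, apply the implicit function theorem (justified once $P_s(1,0)\neq 0$ is known), and import $\theta'_{d_\ast/d}(0)=-\tau=-1$ and the external strict convexity $\theta''_{d_\ast/d}(0)>0$ from \cite{cantrell.tanaka.1}, \cite{cantrell.tanaka.2}. This is logically sound --- those convexity results are independent of Lemma~\ref{lem.deriv}, and the standing assumption that $d,d_\ast$ are not roughly similar makes the strict convexity applicable --- but it is worth noting the inversion: the paper later \emph{proves} $\theta''_{d_\ast/d}(0)=-\lambda_{tt}(1,0)/\lambda_s(1,0)$ (Theorem~\ref{thm.var2}) by the very same implicit differentiation you use here, whereas you consume that identity together with the external $\theta''>0$ to get $\lambda_{tt}(1,0)>0$. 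The paper's cohomological route has the minor advantage of exhibiting $\lambda_{tt}(1,0)$ as a concrete dynamical variance and isolating exactly where rough similarity forces it to vanish, which is the mechanism behind the equivalence in Theorem~\ref{thm.var}; your route is shorter but leans on the full strength of the strict-convexity theorem as a black box.
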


\begin{proof}
Standard facts from thermodynamic formalism \cite{ParryPollicott} imply that the derivatives 
satisfy
\[
\lambda_s(1,0) = -\int_{\Sigma_\Cc} \Psi_d \ d\mu_{\Cc}, \ \text{ and } \  \lambda_t(1,0) = \int_{\Sigma_\Cc} \Phi \ d\mu_{\Cc} = \int_{\Sigma_\Cc} \Psi_{d_\ast} - \Psi_d \ d\mu_{\Cc}
\]
where $\Cc$ is a fixed word maximal component and $\mu_\Cc$ is the equilibrium state for $-\Psi_d$ on $\Cc$. Further we know that
\[
1 = \tau(d_\ast/d) = \frac{ \int_{\Sigma_\Cc} \P_{d_\ast}  \ d\mu_\Cc}{\int_{\Sigma_\Cc} \P_{d} \ d\mu_\Cc}
\]
by Theorem 5.4 in \cite{cantrell.tanaka.2} and hence $\lambda_t(1,0) = 0$.
We also have  \cite{ParryPollicott}  that
\[
\lambda_{t t}(1,0) = \lim_{n\to\infty} \frac{1}{n} \int_{\Sigma_\Cc} (\Phi^n)^2 \ d\mu_{\Cc} \ge 0
\]
and that equality occurs if and only if $\Phi$ is cohomologous to a constant function on $\Sigma_\Cc$. If $\Phi$ is cohomologous to a constant function then this constant must be $0$ and so by Livsic's Theorem $\Phi^n(z) = 0$ for all $z \in \Sigma_\Cc$ with $\sigma^n(z) = z$. This happens if and only if $d$ and $d_\ast$ have the same translation length functions by \cite[Corollary 3.6]{cantrell} and this occurs if and only if they are roughly similar by Theorem \ref{thm.equiv}.
\end{proof}

It will transpire that the quantity $\sigma^2$ from Proposition \ref{prop.analytic} is given by
\begin{equation}\label{eq.variance}
\sigma^2(d_\ast/d) := - \frac{\lambda_{tt}(1,0)}{\lambda_s(1,0)} > 0.
\end{equation}
Before we prove Proposition \ref{prop.analytic} we need one last result regarding the spectral properties of the $L_{j,s,t}$. 
\begin{proposition}\label{prop.srless}
The assumption that either $\G$ is not virtually free or that $\G$ is virtually free and $d$ has non-arithmetic length spectrum guarantees the following property of the operators $L_{j,s,t}$ for $j =1, \ldots, m$. For each $s = 1 + il$ with $l \neq 0$ there exists $\epsilon(s) > 0$ such that each of the operators $L_{j,s,t}$ with $|t| < \epsilon(s)$ have spectral radius at most $1$ and furthermore do not have $1$ as an eigenvalue.
\end{proposition}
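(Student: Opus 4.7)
The plan is to establish the statement at $t = 0$ first, then extend to small $|t|$ via continuity of the spectrum. At $t=0$, the pointwise domination $|L_{j,1+il,0}^n f(x)| \le L_{j,1,0}^n |f|(x)$ combined with $\textnormal{P}_j(-\Psi_d) = 0$ immediately bounds the spectral radius of $L_{j,1+il,0}$ by $1$. The core technical step is to show that $1$ is not an eigenvalue. Suppose for contradiction that $L_{j,1+il,0} h = h$ for some nonzero $h \in F_\theta(\Sigma_{C_j})$. Since on every non-maximal component of $C_j$ the dominating operator is a strict contraction (pressure $< 0$), $h$ must be concentrated on the unique maximal component $\Sigma_{\Cc_j}$. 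Analyzing the equality case in the triangle inequality on the mixing sub-systems arising from the period $p_j$ decomposition, one obtains that $|h|$ is proportional to the Perron eigenfunction for $L_{j,1,0}$ on $\Sigma_{\Cc_j}$ and that $\phi = \arg h$ satisfies the cohomological equation
\[
l\Psi_d(y) = 2\pi k(y) + \phi(y) - \phi(\sigma y), \quad y \in \Sigma_{\Cc_j},
\]
for some continuous $k: \Sigma_{\Cc_j} \to \mathbb{Z}$ (necessarily locally constant). Evaluating on periodic points $\sigma^n(y) = y$ yields $l \cdot S_n\Psi_d(y) \in 2\pi\mathbb{Z}$.

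By Lemma \ref{lem.holder}(1), $S_n\Psi_d(y)$ equals $d(o,\ev_n(y))$ up to an $O(1)$ error, and iterating the corresponding loop identifies $S_n\Psi_d(y)$ with the translation length $\ell_d([g_y])$ of the associated conjugacy class (cf.\ \cite[Corollary 3.6]{cantrell}). Growth quasi-tightness (Lemma \ref{lem.gqt}) ensures that every conjugacy class in $\G$ has a representative in $F \G_{\Cc_j} F$, so translation lengths on the whole group differ by $O(1)$ from translation lengths of elements represented by loops in $\Cc_j$. Using conjugacy invariance and considering high powers of a given element to wash out bounded errors, this propagates the lattice constraint to the entire length spectrum: $\ell_d([g]) \in (2\pi/l)\mathbb{Z}$ for every $g \in \G$. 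This contradicts the standing non-arithmeticity hypothesis, which is assumed directly when $\G$ is virtually free and is automatic otherwise by \cite{GMM2018}.

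For the extension to $|t| < \epsilon(s)$, note that $(s,t) \mapsto L_{j,s,t}$ is an analytic family of operators on $F_\theta(\Sigma_{C_j})$ whose essential spectral radius is uniformly bounded by some $\rho < 1$ on a neighborhood of $(1+il, 0)$ (a standard Lasota--Yorke estimate). Since $1$ is neither an eigenvalue of $L_{j,1+il,0}$ nor in its essential spectrum, an open disc around $1$ is disjoint from the spectrum of $L_{j,1+il,0}$. By upper semi-continuity of the spectrum for analytic families of quasi-compact operators, we may choose $\epsilon(s) > 0$ so that for $|t| < \epsilon(s)$ the spectrum of $L_{j,1+il,t}$ remains in an arbitrarily small neighborhood of that of $L_{j,1+il,0}$. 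This yields both spectral radius at most $1$ and the exclusion of $1$ as an eigenvalue.

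The principal obstacle is the passage from arithmeticity of Birkhoff sums over periodic orbits on the single component $\Sigma_{\Cc_j}$ to arithmeticity of the length spectrum of $d$ on the full group $\G$; this requires combining the translation-length correspondence with growth quasi-tightness, in a spirit reminiscent of the component-to-group transfer carried out in Lemma \ref{lem.pressagree}.
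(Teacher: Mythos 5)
Your proof follows the same two-step skeleton as the paper's: first settle the statement at $t=0$, then use analytic perturbation theory (continuity of the isolated spectrum, uniform quasi-compactness) to push it to small $|t|$. The second step is essentially identical to the paper's. The key difference is in how the $t=0$ case is handled: the paper simply invokes \cite[Proposition~4.4]{cantrell}, which states precisely that non-arithmeticity of the length spectrum of $d$ prevents $L_{j,1+il,0}$ from having $1$ as an eigenvalue (together with \cite[Proposition~1.13]{cantrell} to reduce the not-virtually-free case to the non-arithmetic case). You instead reconstruct this cited result from scratch via the classical Ruelle--Perron--Frobenius/Wielandt analysis: an eigenfunction forces a cohomological equation $l\Psi_d = \phi - \phi\circ\sigma \pmod{2\pi\Z}$ on the maximal component, hence arithmeticity of $\{S_n\Psi_d(y) : \sigma^n y = y\}$, and then growth quasi-tightness propagates this to arithmeticity of $\ell_d$ on all of $\G$. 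This is a valid and reasonably detailed reconstruction of what the citation encodes, so the mathematics is on the right track, but it is doing more work than the paper does.

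The one place where your sketch leaves a genuine gap is the propagation step you yourself flag as ``the principal obstacle.'' Saying that translation lengths on $\G$ ``differ by $O(1)$'' from Birkhoff sums over loops in $\Cc_j$ and that high powers ``wash out bounded errors'' is not yet a proof: an $O(1)$ error on $n\,\ell_d[g]$ does not by itself force $\ell_d[g]$ into $(2\pi/l)\Z$, and one must show that the Birkhoff sum over the loop associated to (a conjugate of) $g^n$ \emph{exactly} computes $n\,\ell_d[g]$ via \cite[Corollary~3.6]{cantrell} (rather than up to $O(1)$) and then that every translation length arises this way via growth quasi-tightness. This is precisely the content of the cited proposition, so it needs to be either carried out carefully or cited as the paper does. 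A second, smaller point: your final sentence claims the perturbation argument ``yields spectral radius at most $1$,'' but knowing a neighbourhood of $1$ is spectrum-free does not bound the spectral radius; the paper obtains the spectral radius bound separately via \cite[Theorem~4.5]{ParryPollicott} rather than from the local spectral picture.
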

\begin{proof}
We recall that by Proposition \cite[Proposition 1.13]{cantrell} under the assumptions on $\G$, $d$ has non-arithmetic length spectrum. This guarantees that for each $s = 1 + il$ for $l\neq 0$ the operators $L_{j,s,0}$ do not have $1$ as an eigenvalue \cite[Proposition 4.4]{cantrell}. There are then two cases:\\
\textit{Case 1: $L_{j,s,0}$ has spectral radius strictly less than $1$.} In this case, by upper semi continuity of the spectrum there exists $\epsilon(s) > 0$ such that $L_{j,s,t}$ has spectral radius strictly less than $1$ when $s = 1 + il$ and $|t| < \epsilon(s)$.\\
\textit{Case 2: $L_{j,s,0}$ has spectral radius $1$.} In this case, by our above discussion $L_{j,s,0}$ has simple maximal eigenvalues of modulus $1$. Standard facts from analytic perturbation theory then guarantee that  there exists $\epsilon(s) >0$ such that $L_{j,s,t}$ has the simple maximal eigenvalues when $s = 1 + il$ and $|t| <\epsilon(s)$ and furthermore that they vary analytically as $t$ varies. In particular we can take $\epsilon(s)$ small so that $1$ is not an eigenvalue of $L_{j,s,t}$ for any $s = 1 + il, |t| < \epsilon(s)$. Lastly by Theorem 4.5 in \cite{ParryPollicott} the spectral radius of each $L_{j,s,t}$ is always at most $1$.
\end{proof}

We are now ready to prove Proposition \ref{prop.analytic}.
\begin{proof}[Proof of Proposition \ref{prop.analytic}]
We first show that each $\eta_p$ is analytic on the region $\mathfrak{Re}(s) \ge 1$ except for $s = 1$. By part (1) of Proposition \ref{prop.sr} we can write
\[
\eta(s,t) = \sum_{n \ge 1} \sum_{j=1}^k L_{j, s,t}^n \chi(\dot{0}) + \gamma(s,t)
\]
where $\gamma(s,t)$ is a function that is bi-analytic for $\mathfrak{Re}(s) > 1 - \epsilon$ for some $\epsilon >0$ and $|t|$ is sufficiently small depending on $s$. This is because the expression 
\[
\sum_{j=1}^k L_{j, s,t}^n \chi(\dot{0}) 
\]
over counts group elements corresponding to paths in the coding (starting at $\ast$) that do not enter a maximal component and by Proposition \ref{prop.mc} on these components the restricted pressure of $-\P_d$ is strictly less than $1$. We then note that for any fixed $\mathfrak{Re}(s) > 1$ and for $|t|$ sufficiently small depending on $s$ the spectral radius of any of the $ L_{j,s , t}$ is strictly less than $1$. Hence we can  differentiate $\eta(s,t)$ at $t =0$ for any $s$ with $\mathfrak{Re}(s) > 1$ to deduce analyticity of $\eta_p$ at $s$. When $s \neq 1$ has $\mathfrak{Re}(s) =1$ we can apply Proposition \ref{prop.srless} to deduce that each $L_{j, -s\Psi}$ either has spectral radius strictly less than $1$ or has $p_j$ (the period of $\Sigma_j$) simple maximal eigenvalues that does not include $1$. Hence for all such $s$ and for all $t$ sufficiently small $\eta(s,t)$ admits an analytic extension to $(s,0)$ which we can differentiate to give the required extension for $\eta_p$ at $s$.

We now consider the values of $s$ close to $1$. We know that there exists a complex neighbourhood $U \times V$ of $(1,0)$ such that for $(s,t) \in U \times V$ each of the operators $ L_{j, s ,t }$ have $p_j$ simple maximal eigenvalues of the form $e^{\pi i/p_j} e^{\Pr_j(-s\Psi_d - t\Phi)}$ where $p_j$ represents the period of the $j$th maximal component. The pressures $\Pr_j(-s\Psi_d - t\Phi)$ are bi-analytic in the neighbourhood $U \times V$. Further, by Lemma \ref{lem.pressagree} we know 
that $\Pr_j(-s\Psi_d - t\Phi)$ is independent of $j$ for all $(s,t) \in U \times V$. We write $\text{P}(s,t)$ for this function and $\lambda(s,t) = e^{\text{P}(s,t)}$ as before. It follows that for $(s,t) \in U\times V$ we can write 
\[
\eta(s,t) = \sum_{n \ge 1} \sum_{j=1}^k L_{j, s,t}^n \chi(\dot{0}) + \gamma(s,t) = \sum_{n\ge 1} \sum_{j=1}^k e^{n \Pr_j(-s\Psi_d - t\Phi)} Q_{j,s,t}\chi(\dot{0}) + \alpha(s,t)
\]
where $\alpha(s,t)$ is bi-analytic in a neighbourhood of $(1,0)$. Since the pressures coincide, setting $Q(s,t) = \sum_{j=1}^k Q_{j,s,t}\chi(\dot{0})$ we see that 
\begin{equation}\label{eq.1}
\eta(s,t) = \frac{Q(s,t) \lambda(s,t)}{1 - \lambda(s,t)} + \alpha(s,t).
\end{equation}
For later we note that $Q(1,0)$ is a positive real number. Indeed, by part (3) of Proposition \ref{prop.sr}, we can realise it as the limit
\[
Q(1,0) = \lim_{n\to\infty} \frac{1}{n} \sum_{i=1}^n \sum_{j=1}^k L_{j, 1,0}^i\chi(\dot{0}) = \lim_{n\to\infty} \frac{1}{n} \sum_{|g|_S \le n} e^{-d(o,g)}
\]
and there exists $C_1, C_2 >0$ such that
\[
C_1 \le \sum_{|g|_S = n} e^{-d(o,g)} \le C_2
\]
for all $n\ge 1$ by \cite[Lemma 2.8]{cantrell.tanaka.1}.

We now want to use expression (\ref{eq.1}) to deduce the required properties about the poles of $\eta_p$. 
We begin by considering the $p$th derivative of $(\ref{eq.1})$.  Writing $F(s,t) = Q(s,t) \lambda(s,t)$ we see that
\[
\left.\frac{\partial^p }{\partial t^p}\right|_{(s,0)} \eta(s,t) =  \left.\frac{\partial^p }{\partial t^p}\right|_{(s,0)} \left( \frac{F(s,t)}{1 - \lambda(s,t)} \right) + \beta(s)
\]
for some function $\beta$ that is analytic for $s$ near $1$. We then make the following claim.

\textit{Claim:} For each $p\ge 1$ there exists a function $R_p$ than is bi-analytic on a neighbourhood of $(1,0)$ such that if $p$ is even
\begin{equation}\label{eq.even}
\frac{\partial^p }{\partial t^p} \eta(s,t) = \frac{p!\lambda_{tt}(s,t)^{\frac{p}{2}}}{2^{\frac{p}{2}}(1-\lambda(s,t))^{1 + \frac{p}{2}}} F(s,t) + \frac{R_p(s,t)}{(1-\lambda(s,t))^{\frac{p}{2}}}
\end{equation}
and if $p$ is odd
\begin{equation}\label{eq.odd}
\frac{\partial^p }{\partial t^p} \eta(s,t) = \frac{(p-1)! (p+1) \lambda_{tt}(s,t)^{\frac{p-1}{2}}\lambda_t(s,t)}{2^{\frac{p+1}{2}}(1-\lambda(s,t))^{1 + \frac{p+1}{2}}} F(s,t) + \frac{R_p(s,t)}{(1-\lambda(s,t))^{\frac{p+1}{2}}}.
\end{equation}
Furthermore, for any $p\ge 1$ the following hold. If $p$ is even $R_p(s,0)(1-\lambda(s,0))^{-\frac{p}{2}}$  has poles of integer orders at most $p/2$ at $(1,0)$ and is analytic otherwise. If $p$ is odd then $R_p(s,0)(1-\lambda(s,0))^{-\frac{p+1}{2}}$ has integer poles of order at most $(p+1)/2$ at $(1,0)$ and is otherwise analytic.

This claim can be proved using Lemma \ref{lem.deriv} (specifically the fact that $\lambda_t(1,0) = 0$) and by induction. We leave the details to the reader.

We can now use the claim to conclude the proof. We  now distinguish the cases of $p$ being odd and even.\\
\textit{Case 1: $p$ is odd.} Evaluating expression $(\ref{eq.odd})$ at $(s,t) = (s,0)$ and using the fact that $\lambda_t(1,0) = 0$ we see that $\eta_p(s)$ has poles of integer order at most $(p+1)/2$. This is precisely what we want to show when $p$ is odd.\\
\textit{Case 2: $p$ is even.} Evaluating expression $(\ref{eq.even})$ at $(s,t) = (s,0)$ we see that 
\begin{equation}\label{eq.pderiv}
\left. \frac{\partial^p }{\partial t^p}\right|_{(s,0)} \eta(s,t) =  \frac{p! \lambda_{t t}(s,0)^{p/2}}{2^{p/2}(1-\lambda(s,0))^{1+p/2}}F(s,0) + R(s)
\end{equation}
where $R(s)$ is analytic in a neighbourhood of $s =1$ except for possible poles of orders at most $p/2$. Now recall that $F(s,0)$ is analytic in a neighbourhood of $s =1$ and also $F(1,0)$ is a strictly positive real number. Hence for $s$ near $1$, $F(s,0) = F(1,0) + (s-1)A(s)$ for some analytic $A$.  Similarly, by Lemma \ref{lem.deriv} we have that $\lambda_{ t t} (s,0) = \lambda_{t t}(1,0) + (s-1)B(s)$ for some analytic $B$ near $s =1$ and we have that $\lambda_{t t}(1,0) $ is a strictly positive real number. Lastly since $\lambda_s(1,0) < 0$ we have that $1 - \lambda(s,0) = (1-s) C(s)$ where $C$ is analytic in a neighbourhood of $1$ and 
$C(1) =  \lambda_s(1,0)$ is a strictly negative real number. Substituting the functions  $A, B, C$ into equation $(\ref{eq.pderiv})$ then shows that
\[
\left. \frac{\partial^p }{\partial t^p}\right|_{(s,0)} \eta(s,t) = \frac{F(1,0) p! \lambda_{t t}(1,0)^{p/2}}{2^{p/2} C(1)^{1+p/2} (1-s)^{1 + p/2}} L(s)
\]
where $L(s)$ is an analytic function on a neighbourhood of $s=1$ and $L(1) =1$. To conclude we note that
\[
 \frac{F(1,0) p! \lambda_{t t}(1,0)^{p/2}}{2^{p/2}C(1)^{1+p/2} (1-s)^{1 + p/2}} = \frac{F(1,0) p! \sigma^p}{2^{p/2}(-C(1))(s-1)^{1+p/2}}
\]
where $\sigma^2$ is as defined in (\ref{eq.variance}).
This concludes the proof and shows that the constants $C, \sigma^2$ in part $(2)$ of Proposition \ref{prop.analytic} are given by
\[
C = - \frac{F(1,0)}{\lambda_s(1,0)}\ \text{ and } \ \sigma^2 = \sigma^2(d_\ast/d) = - \frac{\lambda_{tt} (1,0)}{\lambda_s(1,0)},
\]
both of which are strictly positive.
\end{proof}


\section{Deducing the main theorems}\label{sec.proofs}
In this section we operate under the same assumptions that we stated at the beginning of Section \ref{sec.analytic}. As mentioned earlier, we  will use the method of moments. That is, we want to study the limit of the $p$th moments
\begin{equation} \label{eq.moments}
\lim_{T\to\infty} \frac{1}{\#\{ g\in\G: d(o,g) < T\}} \sum_{d(o,g) < T} \left(\frac{d_\ast(o,g) - T}{\sqrt{T}} \right)^p
\end{equation}
for each $p\ge 1$. 
We would like to show that for each $p\ge 1$ the moments written above in (\ref{eq.moments}) are equal to $0$ if $p$ is odd and to $\sigma^p (p-1)!!$ when $p$ is even \cite[Theorem 30.2]{billingsley}. Here, given $k \in \mathbb{Z}_{\ge 0}$, $k!!$ represents the product of the numbers between $1$ and $k$ that have the same parity as $k$. We need to be a little careful here: the method of moments applies to discrete sequence of random variables and our central limit theorem has convergence in a continuous variable $T$. We elaborate on this point during the proof of Theorem \ref{thm.main}.

We will actually study slightly different limits in $(\ref{eq.moments})$ in which $d_\ast(o,g) - T$ is replaced by $d_\ast(o,x) - d(o,x)$.
To extract the asymptotics we need from Proposition \ref{prop.analytic} we use the following Tauberian Theorem due to Delange \cite[Theorem III]{Delange}.
Throughout the rest of the paper, given two functions $f,g : \R \to \R$ we write $f(T) \sim g(T)$ if $f(T)/g(T) \to 1$ as $T\to\infty$.
\begin{proposition}\label{prop.tau}
For a monotone increasing function $\phi : \mathbb{R}_{>0} \to \mathbb{R}_{>0}$ we set
\[
f(s) = \int_0^{\infty} e^{-sT} \ d\phi(T).
\]
Suppose that there is $\delta > 0$ such that
\begin{enumerate}
\item $f(s)$ is analytic on $\{\mathfrak{Re}(s) \ge \delta\} \backslash \{\delta\}$; and,
\item  there are positive integers $n, k  \ge 1$, an open neighbourhood $U \ni \delta$, non-integer numbers $0 < \mu_1, \ldots, \mu_k < n$ and analytic maps $g,h, l_1, \ldots, l_k : U \to \mathbb{C}$ such that
\[
f(s) = \frac{g(s)}{(s-\delta)^n} + \sum_{j=1}^k \frac{l_j(s)}{(s-\delta)^{\mu_j}}+ h(s) \ \text{ for $s \in U$ and such that $g(\delta) > 0$}.
\]
\end{enumerate}
Then
\[
\phi(T) \sim \frac{g(\delta)}{(n-1)!} T^{n-1} e^{\delta T}
\]
as $T\to\infty$.
\end{proposition}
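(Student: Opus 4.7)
The plan is to reduce the statement to a Wiener--Ikehara style argument applied to a smoothed version of $\phi$, using the monotonicity of $\phi$ as the Tauberian hypothesis. The intuition is that Laplace inversion heuristically gives $\phi(T) \sim \frac{1}{2\pi i}\int e^{sT} f(s)\, ds$, and the classical Hankel identity $\frac{1}{2\pi i}\oint_H e^{sT}(s-\delta)^{-\alpha}\, ds = T^{\alpha-1}e^{\delta T}/\Gamma(\alpha)$ (obtained by substituting $u = (s-\delta)T$ and invoking the Hankel representation of $1/\Gamma$) predicts that the principal singularity $g(s)/(s-\delta)^n$ contributes exactly the claimed main term $g(\delta) T^{n-1}e^{\delta T}/(n-1)!$, while each fractional singularity $l_j(s)/(s-\delta)^{\mu_j}$ contributes only $O(T^{\mu_j - 1}e^{\delta T})$, which is strictly negligible since $\mu_j < n$.

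To make this rigorous I would first introduce the iterated averages
\[
\phi_N(T) = \int_0^T \frac{(T-u)^{N-1}}{(N-1)!}\, d\phi(u),
\]
whose ordinary Laplace transform one computes (by Fubini and the identity $\int_0^\infty v^{N-1}e^{-sv}dv/(N-1)! = s^{-N}$) to be $f(s)/s^N$. Choosing $N$ large enough guarantees that $f(s)/s^N$ decays sufficiently on vertical lines so that Perron's formula represents $\phi_N(T)$ as an absolutely convergent contour integral for any $c > \delta$. One then shifts the contour to a Hankel-type path that wraps the singularity at $s = \delta$ and otherwise hugs the critical line $\mathfrak{Re}(s) = \delta$; this is legal because the first hypothesis gives analyticity of $f$ on the closed half-plane except at $\delta$. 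Evaluating each piece of the decomposition of $f$ termwise via the Hankel identity yields
\[
\phi_N(T) = \frac{g(\delta)\, T^{n+N-1} e^{\delta T}}{(n+N-1)!} + \sum_{j=1}^k \frac{l_j(\delta)\, T^{\mu_j+N-1} e^{\delta T}}{\Gamma(\mu_j + N)} + o\bigl(T^{n+N-1}e^{\delta T}\bigr),
\]
where the error term comes from the far portion of the shifted contour and is controlled by a Riemann--Lebesgue type argument using the analyticity of $f$ along the critical line away from $\delta$.

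The main obstacle is the \emph{desmoothing step}, namely recovering the asymptotic for $\phi(T)$ itself from that of $\phi_N(T)$. This is where the monotonicity of $\phi$ becomes essential: iterated finite differences of $\phi_N$ at scale $h > 0$ sandwich $\phi(T)$ from above and below, in the spirit of the standard Karamata-type argument
\[
\frac{\phi_N(T+h) - \phi_N(T)}{h} \ge \text{average of } \phi \text{ over } [T,T+h] \ge \phi(T)
\]
(iterated $N$ times and applied to monotone $\phi$). Choosing $h = h(T)\to 0$ at a rate that balances the Taylor expansion error against the target asymptotic scale transfers the leading $T^{n+N-1}$ term in $\phi_N$ into the claimed $T^{n-1}$ asymptotic for $\phi$, with constant $g(\delta)/(n-1)!$. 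The delicate point is uniform control of the fractional contributions from the $l_j$ terms during desmoothing: because each $\mu_j$ is non-integer and strictly less than $n$, these contribute $O(T^{\mu_j - 1}e^{\delta T})$ after desmoothing, which remains strictly negligible against the main term but requires careful bookkeeping (and in particular the non-integrality of the $\mu_j$, which prevents their singular structure from matching that of the principal pole) to confirm that they do not corrupt the leading coefficient.
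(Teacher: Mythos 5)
The paper does not prove Proposition \ref{prop.tau} at all: it cites it as Theorem III of Delange's generalization of the Ikehara Tauberian theorem, whose proof goes through the Wiener--Ikehara/Bochner machinery of Fej\'er kernels and approximate identities rather than contour integration. Your attempt is therefore more ambitious than the paper itself, and it takes a genuinely different route (a Perron--Hankel contour argument plus Karamata desmoothing). Unfortunately that route has a real gap.

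The problem is the contour shift. You move the integration line in Perron's formula from $\mathfrak{Re}(s)=c>\delta$ to a path hugging the critical line $\mathfrak{Re}(s)=\delta$ and then invoke ``a Riemann--Lebesgue type argument'' to kill the far portion of the shifted contour. But hypothesis (1) only gives \emph{analyticity} of $f$ on $\{\mathfrak{Re}(s)\ge\delta\}\setminus\{\delta\}$; it gives no quantitative bound on $|f(\delta+it)|$ as $|t|\to\infty$. The monotonicity of $\phi$ gives $|f(\sigma+it)|\le f(\sigma)$ only for $\sigma>\delta$, and $f(\sigma)\to\infty$ as $\sigma\to\delta^+$, so there is no uniform bound that survives the passage to the boundary line, and Phragm\'en--Lindelöf cannot be applied without some growth control at infinity. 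Consequently neither the vanishing of the horizontal connecting segments nor the absolute convergence of the integral along $\mathfrak{Re}(s)=\delta$ is justified by the hypotheses. The Delange/Wiener--Ikehara proof sidesteps exactly this difficulty: it never shifts the contour to the critical line, but instead convolves with a Fej\'er kernel of compact frequency support, uses dominated convergence (pointwise on the compact support) to pass $\sigma\to\delta^+$, and then exploits the positivity of $d\phi$ on the physical side to recover the pointwise asymptotic. That is where the Tauberian hypothesis (monotonicity) is really consumed; in your sketch it only enters in the desmoothing step, which is too late.

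A secondary issue: the Hankel identity $\frac{1}{2\pi i}\oint_H e^{sT}(s-\delta)^{-\alpha}\,ds = T^{\alpha-1}e^{\delta T}/\Gamma(\alpha)$ requires the Hankel loop to wrap the cut and recede to $\mathfrak{Re}(s)=-\infty$, i.e.\ to leave the half-plane $\mathfrak{Re}(s)\ge\delta$ where $f$ is known to be analytic; so even the termwise evaluation of the principal and fractional pieces cannot be taken literally. If you want a contour-based proof you would need either extra hypotheses (e.g.\ polynomial growth of $f$ on $\mathfrak{Re}(s)=\delta$, as in the singularity-analysis transfer theorems of Flajolet--Odlyzko type, or analyticity in a wedge to the left of $\delta$), or you should follow the Fej\'er-kernel route of Delange, which matches the hypotheses as stated.
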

It will be useful to make the following definition.
\begin{definition}
For each $p \ge 1$ we set
\[
\pi_p(T) = \sum_{d(o,g) < T} (d_\ast(o,g) -  d(o,g))^p
\]
for $T >0$. 
\end{definition}
Using $\pi_p$ we can write
\[
\sum_{g\in\G} (d_\ast(o,g) -  d(o,g))^p e^{-sd(o,g)} = \int_0^\infty e^{-sT} \ d\pi_p(T).
\]
When $p$ is even Proposition \ref{prop.tau} applies (as $\pi_p$ is monotone increasing) and we see that 
\[
\pi_p(T) \sim \frac{Cp!\sigma^p}{2^{p/2}(\frac{p}{2})!} e^{T} T^{\frac{p}{2}}= C\sigma^p(p-1)!! \,e^{T} T^{\frac{p}{2}}  \ \text{ and } \ \pi_0(T) \sim Ce^{T}  
\]
as $T\to\infty$. Here $C,\sigma^2 > 0$ are as in part $(2)$ of Proposition \ref{prop.analytic}. In now follows that
\[
\lim_{T\to\infty} \frac{1}{\#\{ g\in\G: d(o,g) < T\}} \sum_{d(o,g) < T} \left(\frac{d_\ast(o,g) -  d(o,g)}{\sqrt{T}} \right)^p = \lim_{T\to\infty} \frac{\pi_p(T)}{T^{\frac{p}{2}} \pi_0(T)} = \sigma^p (p-1)!! .
\]
This is what we want to show in the case that $p$ is even. 

Before continuing with the proof of Theorem \ref{thm.main} we  prove Theorem \ref{thm.var} and Theorem \ref{thm.var2}.
\begin{proof}[Proof of Theorem \ref{thm.var} and Theorem \ref{thm.var2}]
Recall that we are assuming that $d,d_\ast$ are not roughly similar. Setting $p=2$ the previous display equation gives that
\begin{equation}\label{eq.m1}
\lim_{T\to\infty} \frac{1}{\#\{ g\in\G: d(o,g) < T\}} \sum_{d(o,g) < T} \left(\frac{d_\ast(o,g) -  d(o,g)}{\sqrt{T}} \right)^2 = \sigma^2(d_\ast/d).
\end{equation}
The limit in Theorem \ref{thm.var} is similar except that $d_\ast(o,x) - d(o,x)$ is replaced by $d_\ast(o,x) - T$. By splitting the sum over $\{ x \in\G: d(o,x) <T\}$  in $(\ref{eq.m1})$ into sums over the sets $\{ x\in\G: d(o,x) \le T-T^{1/3}\}$ and $\{ x\in\G: T-T^{1/3} < d(o,x) < T\}$ it is easy to deduce the required limit. This uses the fact that 
\[
\frac{\#\{ x\in\G: d(o,x) \le T-T^{1/3}\}}{\#\{x \in \G : d(o,x) < T\}} = O\left( e^{-T^{1/3}}\right)
\]
 decays to zero faster than any polynomial as $T\to\infty$. We leave the details to the reader.

We also need to identify $\sigma^2(d_\ast/d)$ with the second derivative $\theta_{d_\ast/d}''(0)$. To do this let $w(t)$ be the implicit solution to $\lambda(w(t) ,t) = 0$ which exists and is analytic for $t$ in a neighbourhood of $0$ by the Implicit Function Theorem (which we can apply due to Lemma \ref{lem.deriv}). The Implicit Function Theorem also tells us that
\begin{equation}\label{eq.wderiv}
w'(t) = - \frac{\lambda_t(w(t), t)}{\lambda_s(w(t),t)}
\end{equation}
from which it follows that $w'(0) = 0$.
Now note that 
\[
P(w(t), t) = \Pr_\Cc( (t-w(t)) \P_d  - t\P_{d_\ast} )
\]
where $\Cc$ is any word maximal component by Theorem 5.4 of \cite{cantrell.tanaka.2} and Proposition \ref{prop.mc}. Hence when $t \in \R$ we see that for $\theta_{d_\ast/d}(t) = w(t) - t$ and in particular $\theta''_{d_\ast/d}(0) = w''(0)$. Hence to conclude the proof it suffices to show that $w''(0) = \sigma^2(d_\ast/d)$. We now differentiate $(\ref{eq.wderiv})$ which shows that $ - w''(t)$ is given by
\[
\frac{(\lambda_{t t}(w(t), t) + \lambda_{s t}(w(t),t)w'(t)) \lambda_s(w(t),t) - (\lambda_{t s}(w(t), t) + \lambda_{s s}(w(t), t) w'(t)) \lambda_t(w(t),t)}{(\lambda_s(w(t),t))^2}.
\]
Evaluating at $t = 0$ and using the fact that $w'(0) = \lambda_t(1,0) = 0 $ we see that
\[
w''(0) = - \frac{\lambda_{t t}(1,0)}{\lambda_s(1,0)}
\]
which agrees with $\sigma^2(d_\ast/d)$ due to  $(\ref{eq.variance})$. To finish the proof we need to discuss the case that $d,d_\ast$ are roughly similar. When this is the case it is easy to see that $\sigma^2(d_\ast/d) = 0$ by Theorem \ref{thm.equiv}. The equivalence statement in Theorem \ref{thm.var} then follows and the proof is complete.
\end{proof}

We now continue with the proof of Theorem \ref{thm.main} by studying the moments when $p$ is odd. To prove the following proposition we follow an argument due to Hwang and Janson \cite{hwang.janson}.
\begin{proposition}
When $p$ is odd we have that
\[
\lim_{T\to\infty} \frac{1}{\#\{ g\in\G: d(o,g) < T\}} \sum_{d(o,g) < T} \left(\frac{d_\ast(o,g) - d(o,g)}{\sqrt{T}} \right)^p =   0.
\]
\end{proposition}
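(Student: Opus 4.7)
\emph{Plan.} My approach is to follow Hwang--Janson, reducing the odd-moment computation to a contour integral of a Gaussian characteristic function that vanishes by symmetry. By Proposition \ref{prop.analytic}(1), $\eta_p(s)$ has at $s = 1$ a pole of integer order at most $(p+1)/2$, strictly smaller than the order $1 + p/2$ that would be needed to produce a $T^{p/2} e^T$ contribution; so heuristically $\pi_p(T) := \sum_{d(o,g) < T}(d_\ast(o,g) - d(o,g))^p$ should be $o(T^{p/2} e^T)$. However $\pi_p$ is not monotone for odd $p$, so Proposition \ref{prop.tau} cannot be applied directly. I would instead introduce the auxiliary two-parameter generating function
\[
N_t(T) := \sum_{d(o,g) < T} e^{t(d_\ast(o,g) - d(o,g))},
\]
which is monotone non-decreasing in $T$ for each real $t$ near $0$ and entire in $t$ for each fixed $T$. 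Its Laplace--Stieltjes transform is $\eta(s, -t) = Q(s, -t)\lambda(s, -t)/(1 - \lambda(s, -t)) + \alpha(s, -t)$ near $(1,0)$ by the proof of Proposition \ref{prop.analytic}, with a dominant simple pole at the function $s(t)$ solving $\lambda(s(t), -t) = 1$. The implicit function theorem combined with Lemma \ref{lem.deriv} supplies $s(0) = 1$, $s'(0) = 0$, and $s''(0) = -\lambda_{tt}(1,0)/\lambda_s(1,0) = \sigma^2(d_\ast/d)$.

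\emph{Execution.} First I would apply Proposition \ref{prop.tau} to $N_t$ for each real $t$ near $0$ to obtain $N_t(T) = C(t) e^{s(t) T}(1 + o(1))$, then upgrade this to a \emph{uniform} asymptotic on a complex disk $|t| \le \rho$ by exploiting the bi-analytic structure from Lemma \ref{lem.pressagree} together with the spectral control in Propositions \ref{prop.sr} and \ref{prop.srless}. Next, since $N_t(T)$ is entire in $t$, Cauchy's integral formula on the circle $|t| = c/\sqrt{T}$ gives
\[
\pi_p(T) = \left.\partial_t^p N_t(T)\right|_{t = 0} = \frac{p!}{2\pi i} \oint_{|t| = c/\sqrt{T}} \frac{N_t(T)}{t^{p+1}}\, dt.
\]
Changing variables $u = t \sqrt{T}$ and substituting the uniform asymptotic together with the Taylor expansion $s(u/\sqrt{T})\,T = T + \sigma^2 u^2/2 + O(T^{-1/2})$ yields
\[
\frac{\pi_p(T)}{T^{p/2}\, e^T} \longrightarrow C(0) \cdot \left.\frac{d^p}{du^p}\right|_{u = 0} e^{\sigma^2 u^2/2}.
\]
For odd $p$ the right-hand side vanishes by evenness of $u \mapsto e^{\sigma^2 u^2/2}$; dividing by $\pi_0(T)/e^T \to C(0)$ (the $p = 0$ instance, already giving the orbital counting asymptotic) delivers the claimed limit $0$.

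\emph{Main obstacle.} The hard part will be establishing the uniform Delange asymptotic for $N_t(T)$ on a complex disk in $t$; non-uniform Delange alone gives only pointwise-in-$t$ control, which is not enough to commute with differentiation in $t$ via the Cauchy integral. Since Dolgopyat-type vertical-line estimates are unavailable in this generality, the required uniformity must be extracted from the finer spectral information assembled in Section 2: Lemma \ref{lem.pressagree} identifies the leading eigenvalue across all word-maximal components, Proposition \ref{prop.sr} controls its bi-analytic dependence on $(s, t)$, and Proposition \ref{prop.srless} rules out resonances on the critical line $\mathfrak{Re}(s) = 1$ away from $s = 1$, together yielding the uniformity required to push the contour integration through.
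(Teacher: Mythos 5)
Your plan is internally coherent and correctly identifies the right singularity structure: $N_t(T) := \sum_{d(o,g) < T} e^{t(d_\ast(o,g) - d(o,g))}$ is monotone in $T$, its Laplace--Stieltjes transform is $\eta(s,-t)$, the dominant pole moves along $s(t)$ with $s(0)=1$, $s'(0)=0$, $s''(0)=\sigma^2$, and odd derivatives of the limiting Gaussian $e^{\sigma^2 u^2/2}$ at $u=0$ vanish. However, the step you flag as the ``main obstacle'' is not a gap that can be filled with the tools available in the paper; it is fatal to this route. Delange's theorem (Proposition \ref{prop.tau}) is a purely qualitative, real-variable Tauberian result: it gives $N_t(T) \sim C(t) e^{s(t)T}$ pointwise for each fixed real $t$, with \emph{no error term and no uniformity in $t$}. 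To push a Cauchy integral around a circle of shrinking radius $|t|=c/\sqrt{T}$ through this asymptotic, you would need the $o(1)$ term to be uniform (or quantified) over complex $t$ in a $T$-dependent neighbourhood of $0$. Proposition \ref{prop.srless} only controls the spectrum on the line $\mathfrak{Re}(s)=1$ with an $s$-dependent $\epsilon(s)$ that is not claimed to be uniform in $s$, and Lemma \ref{lem.pressagree} plus Proposition \ref{prop.sr} give bi-analyticity \emph{near} $(1,0)$ but no decay of $\eta(s,t)$ on vertical lines. Extracting a uniform Perron/Wiener--Ikehara asymptotic with a rate is precisely what Dolgopyat-type vertical-line estimates would buy, and the paper's whole point (stated explicitly in Section \ref{sec.method}) is that these are unavailable in this generality. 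You have essentially reproduced the obstruction, not overcome it.

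The paper sidesteps this by the Hwang--Janson positivization trick. It never attempts a uniform Tauberian statement. Instead, for odd $p$, it introduces
\[
G_1(s) = \sum_{g} \bigl((d_\ast-d)^{2p} + d^p\bigr)e^{-sd}, \qquad
G_2(s) = \sum_{g} \bigl((d_\ast-d)^{p} + d^{p/2}\bigr)^2 e^{-sd}, \qquad
G_3(s) = \sum_{g} d^{p/2}(d_\ast-d)^{p} e^{-sd},
\]
so that $G_2 = G_1 + 2G_3$. The summands of $G_1$ and $G_2$ are nonnegative, hence the associated counting functions \emph{are} monotone and the real-variable Delange theorem applies directly, once a fractional-pole analysis of $G_3$ (via the integral representation $\int_0^\infty t^{-1/2}e^{-tx}\,dt = \sqrt{\pi/x}$) is combined with the integer-pole data from Proposition \ref{prop.analytic}. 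Subtracting the two asymptotics shows $\sum_{d(o,g)<T} d(o,g)^{p/2}(d_\ast(o,g)-d(o,g))^p = o(T^p e^T)$, and a final integration by parts converts the weight $d(o,g)^{p/2}$ into $T^{p/2}$, giving $\pi_p(T) = o(T^{p/2}e^T)$. This avoids uniform estimates altogether; it is the correct reading of ``following Hwang--Janson'' in this setting, whereas your plan lands closer to the quasi-powers framework of analytic combinatorics, which needs exactly the vertical-line control that is missing here.
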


\begin{proof}
Fix odd $p \ge 1$.
We define
\begin{align*}
&G_1(s) = \sum_{g\in\G} \left( (d_\ast(o,g) - d(o,g))^{2p} + d(o,g)^p)\right) e^{-sd(o,g)}\\
&G_2(s) = \sum_{g\in\G} \left((d_\ast(o,g) - d(o,g))^p + d(o,g)^{\frac{p}{2}} \right)^2 e^{-sd(o,g)}\\
&G_3(s) = \sum_{g\in\G} d(o,g)^{\frac{p}{2}} (d_\ast(o,g) - d(o,g))^p e^{-sd(o,g)}.
\end{align*}
Note that $G_2 = G_1 + 2G_3$ and that these three series are convergent in $\mathfrak{Re}(s) > 1$.
We also have that
\[
G_1(s) = \eta_{2p}(s) - \eta_0^{(p)}(s) = \frac{g(s)}{(s-1)^{1+p}} + f(s)
\]
where $g(s), f(s)$ are analytic in $\mathfrak{Re}(s) \ge 1$ and $g(1)$ is a positive real number. Here the power of $p$ in parenthesise (in $\eta_0^{(p)}$) represents taking the $p$th derivative. We will use this notation throughout the rest of the paper.
It follows from Proposition \ref{prop.tau} that
\[
\frac{1}{\#\{g\in\G: d(o,g) <T\}} \sum_{d(o,g) <T} (d_\ast(o,g) - d(o,g))^{2p} + d(o,g)^p  \sim \frac{g(1)T^p}{p!}
\]
as $T\to\infty$. 

We now want to show that $G_2$ has a pole with the order poles at $s=1$. We begin by calculating
\[
\eta_p^{\left( \frac{p+1}{2}\right)}(s) = (-1)^{\left( \frac{p+1}{2}\right)} \sum_{d(o,g) <T} d(o,g)^{ \frac{p+1}{2}}(d_\ast(o,g) - d(o,g))^p e^{-sd(o,g)}.
\]
Then using the identity
\[
\int_0^\infty t^{-\frac{1}{2}} e^{-tx} \ dt = \frac{\sqrt{\pi}}{\sqrt{x}}
\]
for $x >0$ we can write
\begin{align*}
G_3(s) &=  \frac{1}{\sqrt{\pi}} \sum_{g\in\G} d(o,g)^{\frac{p+1}{2}} (d_\ast(o,g) - d(o,g))^p e^{-sd(o,g)} \int_0^\infty t^{-\frac{1}{2}} e^{-td(o,g)} \ dt\\
&= \frac{(-1)^{\frac{p+1}{2}}}{\sqrt{\pi}} \int_0^\infty \frac{\eta_p^{\left( \frac{p+1}{2}\right)}(s+t) }{\sqrt{t}} \ dt.
\end{align*}
It follows that $G_3$ is analytic on $\mathfrak{Re}(s) \ge 1$ except for a pole at $s=1$. We also know that $\eta_p^{\left( \frac{p+1}{2}\right)}$ has a pole of order at most $p+1$ at $s=1$. It follows that locally to $s=1$ we can write
\[
\eta_p^{\left( \frac{p+1}{2}\right)}(s) = \sum_{j=1}^{p+1} \frac{a_j}{(s-1)^j} + h(s)
\]
where $h(s)$ is analytic and the $a_1,\ldots, a_{p+1}$ are complex numbers (some of which could be $0$).
Since
\[
\int_0^\infty \frac{1}{(s+t -1)^j \sqrt{t}} \ dt = \frac{\pi (2j-2)!}{2^{2j-1}((j-1)!)^2} \, \frac{1}{(s-1)^{k-\frac{1}{2}}}
\]
it follows that in a neighbourhood of $s=1$
\[
G_3(s) = \sum_{j=1}^{p+1} \frac{c_j}{(s-1)^{j-\frac{1}{2}}} + l(s)
\]
for some analytic function $l$ and constants $c_1, \ldots, c_{p+1}$. Then using the identity $G_2 = G_1 + 2G_3$  we see that
\[
G_2(s) = \sum_{j=1}^{p+1} \frac{a_j}{(s-1)^j} + \frac{b_j}{(s-1)^{j-\frac{1}{2}}} + r(s)
\]
where $a_j, b_j,r$ are analytic maps on $\mathfrak{Re}(s) \ge 1$. Furthermore $a_{p+1}(1) = g(1)$ (where $g$ is the function from our expression for $G_1$). We can then apply Proposition \ref{prop.tau} to $G_2$ to deduce that
\[
\frac{1}{\#\{g\in\G: d(o,g) <T\}} \sum_{d(o,g) < T} \left((d_\ast(o,g) - d(o,g))^p + d(o,g)^{\frac{p}{2}} \right)^2 \sim \frac{g(1)T^{p}}{p!}
\]
as $T \to\infty$. It then follows that
\[
\frac{1}{\#\{g\in\G:d(o,g) <T\}}\sum_{d(o,g) < T} d(o,g)^{\frac{p}{2}} (d_\ast(o,g) - d(o,g))^p = o(T^p)
\]
as $T\to\infty$.
To conclude the proof we need to show that the same estimate holds when we replace $d(o,g)^{\frac{p}{2}}$ with $T^{\frac{p}{2}}$, i.e. we want to show that $\pi_p(T) = o(T^{\frac{p}{2}} e^T)$.
To do so, note that
\begin{equation}\label{eq.pi}
\sum_{d(o,g) < T} d(o,g)^{\frac{p}{2}} (d_\ast(o,g) - d(o,g))^p = \int_0^T t^{\frac{p}{2}} \ d\pi_p(t) = T^{\frac{p}{2}} \pi_p(T) - \frac{p}{2} \int_0^T t^{\frac{p}{2} -1} \pi_p(t) \ dt.
\end{equation}
This follows from a standard integration by-parts formula for the Stiltjes integral.
Now using the fact that $|\pi_p(T)| \le \pi_{p-1}(T) +\pi_{p+1}(T)  = O(T^{\frac{p+1}{2}} e^T)$ (where we have used that $p-1$ and $p+1$ are even and so our estimates on the even moments apply) it is a simple calculation to show that 
\[
\left| \int_0^T  t^{\frac{p}{2} -1} \pi_p(t) \ dt \right| =O\left( \int_0^T t^{p -\frac{1}{2}} e^t \ dt\right)  = o(T^{p} e^T)
\]
as $T \to\infty$.
Substituting this into $(\ref{eq.pi})$ shows that
\[
 \pi_p(T) = T^{-\frac{p}{2}}\left( \sum_{d(o,g) < T} d(o,g)^{\frac{p}{2}} (d_\ast(o,g) - d(o,g))^p  + \frac{p}{2} \int_0^T t^{\frac{p}{2} -1} \pi_p(t) \ dt
\right) = o(T^p e^T)
\]
as $T\to\infty$, as required.
\end{proof}

We have shown the following.
\[
\lim_{T\to\infty} \frac{1}{\#\{ g\in\G: d(o,g) < T\}} \sum_{d(o,g) < T} \left(\frac{d_\ast(o,g) -  d(o,g)}{\sqrt{T}} \right)^p =     \begin{cases}
       0 &\quad \text{if $p$ is odd} \\
       \sigma^p (p-1)!! &\quad \text{if $p$ is even.}
     \end{cases}
\]
We are now ready to conclude the proof of Theorem \ref{thm.main}.
\begin{proof}[Proof of Theorem \ref{thm.main}]
As mentioned, we would like to apply the method of moments \cite[Theorem 30.2]{billingsley}. 
Considering the moments above along the integers $n\ge 1$ we deduce that for any $t\in\R$
\[
\frac{1}{\#\{ g \in \G: d(o,g) < n\}} \#\left\{ g \in \G: d(o,g) < n\ \text{ and } \ \frac{d_\ast(o,g) - d(o,g)}{\sqrt{n}} \le t  \right\} \to \mathcal{N}_\sigma(t)
\]
as $n\to\infty$ (in the discrete sense). Now note that there exists $M>0$ such that when $T \in [n, n+1)$ and $d(o,x) < T$, $|d_\ast(o,x) - d(o,x)| \le M(n + 1)$ and so  for each $n \ge 1$ and so there exists $R >0$ such that when $T \in [n,n+1)$
\[
\left| \frac{d_\ast(o,x) - d(o,x)}{\sqrt{T}} - \frac{d_\ast(o,x) - d(o,x)}{\sqrt{n}}\right| \le \frac{R}{\sqrt{n}}
\]
for all $n\ge 1$. 
Writing
\[
\mathrm{C}(T,t) = \frac{1}{\#\{ g \in \G: d(o,g) < T\}} \#\left\{ g \in \G: d(o,g) < T \ \text{ and } \ \frac{d_\ast(o,g) - d(o,g)}{\sqrt{T}} \le t  \right\}
\]
we therefore have that $\mathrm{C}(n,t) \le \mathrm{C}(T,t) \le \mathrm{C}(n, t + Rn^{-1/2})$ for each $n\ge 1$, $T \in [n,n+1)$ and $t \in \R$. It follows that for each $t \in \R$, $\mathrm{C}(T,t)$ tends to $\mathcal{N}_\sigma(t)$ as $T\to\infty$ (in the continuous sense).

This is almost the statement of the central limit theorem in Theorem \ref{thm.main} except we would like  $d_\ast(o,x) - d(o,x)$ replaced with $d_\ast(o,x) - T$. However, this follows easily as we have that
\[
\#\{ x\in \G: d(o,x) < T  \text{ and } |d(o,x) - T| > T^{1/3}\} = o(\#\{ x \in \G: d(o,x) < T\})
\]
as $T\to\infty$.
\end{proof}
\section{Further applications and examples}
In this last section we include some further applications of our results and methods. In particular we look at central limit theorems for real valued potentials (other than strongly hyperbolic metrics) on $\G$ when we order elements by their length with respect to a strongly hyperbolic metric. They key observation is that our arguments can be applied to potentials $\varphi : \G \to \R$ that satisfies the following properties:
\begin{enumerate}[(i)]
\item there exists a finite, symmetric generating set $S$ for $\G$ and constant $C >0$ such that for any $g \in \G$ and $s \in S$ both $|\varphi(gs) - \varphi(g)| < C$ and $|\varphi(sg) - \varphi(g)| < C$; and,
\item there exists a Cannon coding (for some generating set) with corresponding shift space $\Sigma$ and a potential $\P_{\varphi}$ such that for $x \in \Sigma$ with $x_0 = \ast$, $S_n \P_{\varphi}(x)= \varphi(\ev_n(x))$.
\end{enumerate}
Once we have a function $\varphi: \G \to \mathbb{R}$ that satisfies these conditions, our arguments apply and we can deduce a central limit theorem for $\varphi$. This can be easily be verified from following our proof of Theorem \ref{thm.main}. The non-degeneracy criteria then becomes the following. The central limit theorem comparing $\varphi$ and a strongly hyperbolic metric $d$ is degenerate if and only if there exists $C >0, \tau \in \R$ such that $|\varphi(g) - \tau d(o,g) |<C$ for all $g \in \G$. We now discuss various examples of $\varphi$ satisfying the above conditions.

\subsection{Word metrics and bi-combable functions}
As mentioned in the introduction, there has been much interest in counting central limit theorems when group elements are ordered by a word metric. Our methods allow us to prove central limit theorems for word metrics but when the ordering is done with respect to a different metric.

\begin{theorem}
Let $\G$ be non-elementary, non-virtually free hyperbolic group and $d$ a strongly hyperbolic metric in $\Dc_\G$. Fix a word metric $|\cdot|_S$ on $\G$ associated to a finite generating set $S$. Then there exists $\tau > v_d/v_S$ and $\sigma^2 > 0$ such that for any $t \in \R$,
\[
\frac{1}{\#\{ g\in\G: d(o,g) < T\}} \#\left\{ g\in\G: d(o,g) < T \ \text{ and } \ \frac{|g|_S - \tau T}{\sqrt{T}}  \le t \right\} \to \mathcal{N}_\sigma(t)
\]
as $T\to\infty$.
\end{theorem}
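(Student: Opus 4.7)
The plan is to apply the framework introduced at the beginning of this section to the potential $\varphi(g) = |g|_S$. Condition (i) is immediate: $\big| |gs'|_S - |g|_S \big| \le 1$ for every $s' \in S$, both for left and right multiplication. For condition (ii), take a Cannon coding for the pair $(\G, S)$ itself, so that each path of length $n$ starting at the initial vertex $\ast$ corresponds to an $S$-geodesic ending at some $g$ with $|g|_S = n$. With this choice the potential $\Psi_\varphi \equiv 1$ is (trivially) H\"older continuous and satisfies $S_n \Psi_\varphi(x) = n = |\ev_n(x)|_S$ whenever $x_0 = \ast$.

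With (i) and (ii) in place, the machinery of Section~\ref{sec.analytic} and Section~\ref{sec.proofs} applies to the two-variable Poincar\'e series
\[
\eta(s,t) = \sum_{g \in \G} e^{-s d(o,g) - t(|g|_S - \tau d(o,g))},
\]
after scaling $d$ so that $v_d = 1$ and $\tau = \tau(|\cdot|_S/d) = 1$. The key observation is that every use of the strongly hyperbolic hypothesis in Section~\ref{sec.analytic} is made on the reference metric $d$, not on the secondary one; the only properties of the secondary potential that are really used are its additivity along paths in the Cannon coding and uniform boundedness on generators, both trivial for $\Psi_\varphi \equiv 1$. In particular, Lemma~\ref{lem.pressagree} continues to hold in this mixed setting because its proof reduces to growth quasi-tightness, a statement about $S$-geodesic words that does not involve the secondary metric. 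The method of moments from Section~\ref{sec.proofs} then yields the stated CLT with variance $\sigma^2 = -\lambda_{tt}(1,0)/\lambda_s(1,0) \ge 0$ and mean $\tau = \tau(|\cdot|_S/d)$.

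It remains to verify strict non-degeneracy. The analogue of Lemma~\ref{lem.deriv} shows that $\sigma^2 = 0$ would force $\Psi_\varphi - \Psi_d$ to be cohomologous to a constant over each word-maximal component, which via Livsic's theorem and \cite[Corollary~3.6]{cantrell} translates into an equality of length spectra $\ell_d[g] = \tau \ell_{|\cdot|_S}[g]$ for every $g \in \G$. By Theorem~\ref{thm.equiv} this would mean $d$ and $|\cdot|_S$ are roughly similar. To exclude this, we invoke the classical fact that translation lengths with respect to a word metric on a hyperbolic group take values in $\frac{1}{N}\Z$ for some positive integer $N = N(\G, S)$, so $|\cdot|_S$ has arithmetic length spectrum; a rough similarity with $d$ would transport this arithmeticity to $\ell_d$, contradicting \cite{GMM2018} since $\G$ is not virtually free. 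Hence $\sigma^2 > 0$, and the strict inequality $\tau > v_d/v_S$ follows from Theorem~\ref{thm.equiv}(3). I expect the most delicate step to be re-verifying Lemma~\ref{lem.pressagree} in this mixed setting, but tracing through its proof with $\Psi_\varphi \equiv 1$ in place of $\Psi_{d_\ast}$ is straightforward since every inequality used there reduces to comparisons of $S$-geodesic counts weighted by $d$.
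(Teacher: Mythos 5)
Your proposal is correct and follows essentially the same approach as the paper: using the Cannon coding for $S$ with the constant potential $\Psi_\varphi \equiv 1$, running the Section~\ref{sec.proofs} machinery, and deducing non-degeneracy from the arithmeticity of the word-metric length spectrum versus the non-arithmeticity of $\ell_d$ guaranteed by \cite{GMM2018} when $\G$ is not virtually free. You spell out the sanity check that Lemma~\ref{lem.pressagree} goes through because it only uses growth quasi-tightness of $S$-geodesics, which the paper leaves implicit, but the structure and key ingredients are the same.
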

Here $v_S$ denotes the exponential growth rate of the word distance $d_S$ associated to $S$. Note that here we know that $\sigma^2$, the variance, is strictly positive.
\begin{proof}
We use a Cannon coding for the word metric $S$ in the theorem. Then, the constant potential with value $1$ on this coding represents the word metric $|\cdot|_S$ in the sense of $(ii)$ above.
It is also clear that $(i)$ is satisfied for the map $g \mapsto |g|_S$.  We therefore have the required properties needed to follow the same proof used to show Theorem \ref{thm.main}. To conclude the proof we need to know that $d$ and $d_S$ are never roughly similar to guarantee that $\tau > v_d/v_S$ and that $\sigma^2 > 0$. This follows from and argument of Gou\"ezel, Matheus and Maucorant \cite{GMM2018} which shows that $d$ can not have arithmetic length spectrum (where as it is known that $d_S$ necessarily does have arithmetic length spectrum). See \cite[Proposition 1.13]{cantrell} for further explanation.
\end{proof}
In fact we can apply our result to the class of bi-combable functions introduced by Calegari and Fujiwara in \cite{CalegariFujiwara2010} (which include word metrics). These functions satisfy $(i)$ and $(ii)$ above (by definition). An interesting example of bi-combable functions are Brook's counting quasi-morphisms. Our work applies and we deduce counting central limit theorems comparing bi-combable functions with strongly hyperbolic metrics. Such central limit theorems are always non-degenerate.

\subsection{Quasi-Fuchsian representations}
For our final example we compare two distances on a surface group $\G$, i.e. $\G$ is the fundamental group of a closed, negatively curved Riemannian surface $(V,\mathfrak{g})$. Given such a group $\G$ we can lift the metric $\mathfrak{g}$ on the surface to obtain a metric $d_\mathfrak{g}$ on $\G$. More precisely, if $(\widetilde{V}, \widetilde{\mathfrak{g}})$ is the universal cover of $(V,\mathfrak{g})$ and we fix a base point $o\in\widetilde{V}$ then we can define
\[
d_\mathfrak{g}(g,h) = d_{\widetilde{\mathfrak{g}}}(g \cdot o, h \cdot o) \ \text{ for $g,h \in \G$.}
\]
 We can also take a representation $\rho : \G \to \PSL_2(\C) \simeq \text{Isom}(\H^3)$ corresponding to convex cocompact isometric action by $\G$ on $\H^3$. From such a representation we obtain a metric $d_{\rho}$  on $\G$ by lifting as we did for $d_\mathfrak{g}$. We then have the following.

\begin{theorem}
Suppose that $\G$ is a surface group and that $d_\mathfrak{g}$ and $d_\rho$ are as above. Then, there exists $\tau > 0$, $\sigma^2 \ge 0$ such that
\[
\frac{1}{\#\{ g\in\G: d_\mathfrak{g}(o,g) < T\}} \#\left\{ g\in\G: d_\mathfrak{g}(o,g) < T \ \text{ and } \ \frac{d_{\rho}(o,g) - \tau T}{\sqrt{T}} \le t  \right\} \to \mathcal{N}_\sigma(t)
\]
as $T\to\infty$. Furthermore, $\sigma^2 = 0$ if and only if $\mathfrak{g}$ corresponds  to a point in Teichm\"uller  space $\rho_\mathfrak{g} : \G \to \PSL_2(\R)$ (i.e. $\mathfrak{g}$ has constant negative curvature) and $\rho$  is a conjugate representations in $\PSL_2(\C)$, i.e. there exists $M \in \PSL_2(\C)$ such that $\rho(g) = M^{-1}\rho_\mathfrak{g}(g) M$ for all $g \in \G$.
\end{theorem}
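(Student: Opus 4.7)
The strategy is to realize both assertions as consequences of Theorem \ref{thm.main} and Theorem \ref{thm.equiv}, with the rigidity characterization of $\sigma^2 = 0$ following from classical length spectrum rigidity theorems. First I would verify that $d_\mathfrak{g}$ and $d_\rho$ both lie in $\Dc_\G$ and are strongly hyperbolic. The universal cover $(\widetilde{V},\widetilde{\mathfrak{g}})$ of a closed negatively curved Riemannian surface is simply connected and satisfies an upper curvature bound $-\kappa^2 < 0$, hence is $\text{CAT}(-\kappa^2)$; after rescaling by $\kappa$ one obtains a $\text{CAT}(-1)$ metric space on which $\G$ acts freely, properly discontinuously, cocompactly and isometrically. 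Example (3) in the remark following the strongly hyperbolic definition gives that the rescaled orbit metric is strongly hyperbolic, and since strong hyperbolicity is preserved under scaling the metric by a positive constant, $d_\mathfrak{g}$ itself is strongly hyperbolic. For $d_\rho$, convex cocompactness provides a free, properly discontinuous, cocompact isometric action on the convex hull of the limit set in $\H^3$, which is $\text{CAT}(-1)$, so $d_\rho$ is strongly hyperbolic as well. Since $\G$ is a non-elementary, non-virtually-free surface group, Theorem \ref{thm.main} applies directly and delivers the central limit theorem with some $\tau > 0$ and $\sigma^2 \ge 0$.

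For the non-degeneracy characterization, Theorem \ref{thm.main} identifies $\sigma^2 = 0$ with $d_\mathfrak{g}$ and $d_\rho$ being roughly similar, and Theorem \ref{thm.equiv} identifies the latter with the existence of $\tau > 0$ satisfying $\ell_{d_\mathfrak{g}}[x] = \tau\, \ell_{d_\rho}[x]$ for every conjugacy class $[x]$ in $\G$. The forward direction of the rigidity statement is immediate: if $\mathfrak{g}$ has constant negative curvature and $\rho = M^{-1}\rho_\mathfrak{g} M$ in $\PSL_2(\C)$, the translation length of $\rho(x)$ on $\H^3$ equals the hyperbolic translation length of $\rho_\mathfrak{g}(x)$ on its invariant $\mathbb{H}^2$ slice, which in turn coincides with $\ell_{d_\mathfrak{g}}[x]$; hence $\tau = 1$ works.

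The harder converse is the main obstacle, and I would address it in two stages. Since $\rho : \G \to \PSL_2(\C)$ is a convex cocompact representation of a surface group, it is quasi-Fuchsian, so I invoke a classical rigidity result for quasi-Fuchsian representations: if the length function $[x] \mapsto \ell_{d_\rho}[x]$ is proportional to the length function of some hyperbolic surface (equivalently, to that of a Fuchsian representation up to scale), then $\rho$ is itself conjugate, inside $\PSL_2(\C)$, to a Fuchsian representation $\rho_0 : \G \to \PSL_2(\R) \subset \PSL_2(\C)$. This can be extracted from Bonahon's theory of geodesic currents, or from the strict convexity of length functions on quasi-Fuchsian space, reflecting the fact that only the Fuchsian locus achieves minimal length spectra among quasi-Fuchsian deformations. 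Once $\rho$ is known to be Fuchsian, $d_\rho$ coincides up to bounded additive error with the orbit metric of the hyperbolic surface $\mathbb{H}^2/\rho_0(\G)$. Rescaling $\mathfrak{g}$ by $\tau^{-2}$ produces a Riemannian metric on $V$ with the same marked length spectrum as this hyperbolic surface, so Otal's marked length spectrum rigidity theorem forces the rescaled $\mathfrak{g}$ to itself be hyperbolic and isometric to $\mathbb{H}^2/\rho_0(\G)$ via a diffeomorphism isotopic to the identity. Hence $\mathfrak{g}$ is of constant negative curvature, $\rho_\mathfrak{g}$ agrees with $\rho_0$ after the conjugation by $M$, and the characterization is complete. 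The principal technical delicacy lies in locating and correctly invoking the quasi-Fuchsian rigidity statement in the first stage, since a priori neither $d_\rho$ nor $d_\mathfrak{g}$ is known to arise from a Fuchsian representation, and one must bootstrap this from only the coarse information of proportional marked length spectra.
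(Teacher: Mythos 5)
Your verification that $d_\mathfrak{g}$ and $d_\rho$ lie in $\Dc_\G$ and are strongly hyperbolic (via rescaling to $\text{CAT}(-1)$ and the remark's Example (3)), the application of Theorem \ref{thm.main} to obtain the central limit theorem, and the forward direction of the rigidity characterization are all correct and consistent with the paper, which treats these steps as immediate.

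The gap is in the converse direction of the rigidity statement, and you flag it yourself without resolving it. Your two-stage plan — first deduce that $\rho$ is Fuchsian from a quasi-Fuchsian rigidity theorem, then conclude that $\mathfrak{g}$ is hyperbolic from Otal's marked length spectrum rigidity — is circular in the form you state it. The quasi-Fuchsian rigidity result you invoke requires that $\ell_{d_\rho}$ be proportional to the length spectrum of a hyperbolic surface (equivalently a Fuchsian representation), but at that point you only know $\ell_{d_\rho}$ is proportional to $\ell_{d_\mathfrak{g}}$ for an arbitrary negatively curved metric $\mathfrak{g}$. Conversely, Otal's theorem requires comparing $\mathfrak{g}$ to a genuine negatively curved surface metric, which needs $\rho$ to already be known Fuchsian. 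Neither stage can be run first, and "bootstrapping from only coarse proportionality of length spectra" is precisely the nontrivial content you are trying to prove. The paper sidesteps this entirely by citing \cite[Theorem~A]{fricker.furman}, which is a simultaneous rigidity statement tailored to exactly this mixed setting (quasi-Fuchsian representation versus negatively curved surface metric); that theorem does not decompose into Bonahon-then-Otal and has its own dedicated proof. As written, your argument reduces the theorem to the very result it needs without establishing it.
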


\begin{proof}
Both metrics $d_\rho$ and $d_\mathfrak{g}$ are strongly hyperbolic and so Theorem \ref{thm.main} applies and the central limit theorem follows. The non-degeneracy equivalence statement follows from a result of Fricker and Furman \cite[Theorem A]{fricker.furman}.
\end{proof}

In the above theorem we could swap the roles of $d_\mathfrak{g}$ and $d_\rho$ to obtain a central limit theorem with possibly different mean and variance.
\subsection{Translation distance functions}
To finish this work we briefly discuss versions of Theorem \ref{thm.main} for translation distance functions. We begin with the following lemma.

\begin{lemma}
Let $d \in \Dc_\G$ be strongly hyperbolic. Then we have that
\[
\frac{1}{\#\{g \in \G: d(o,g) < T\}} \#\{x\in\G: d(o,g) < T, \text{ and } |d(o,g) - \ell_d(g)| > T^{1/3} \} \to 0
\]
as $T\to\infty$.
\end{lemma}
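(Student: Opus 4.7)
My plan is to decompose the count over conjugacy classes of $\G$, exploiting the fact that for any hyperbolic element $g$ the defect $d(o,g) - \ell_d(g)$ is approximately twice the distance from $o$ to a $g$-invariant quasi-axis. Under this interpretation, the exceptional elements in the lemma are precisely those conjugates $h g_0 h^{-1}$ for which $h^{-1}$ moves $o$ far from the quasi-axis of a cyclically reduced representative $g_0$, and a standard volume/orbital counting estimate will then show these are exponentially rare.

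First I would observe that the inequality $\ell_d(g) \le d(o,g) + O(1)$ holds automatically (apply subadditivity along the $g$-orbit of $o$ and use that $d$ is roughly geodesic), so only deviations with $d(o,g) - \ell_d(g) > T^{1/3}$ require control. Next, using the strong hyperbolicity of $d$ together with the cocycle structure of its Busemann function (along the lines of Section~2 of \cite{cantrell.tanaka.1}), I would establish the displacement formula
\[
d(o, g) = \ell_d(g) + 2\, d\bigl(o, \mathrm{Ax}_d(g)\bigr) + O(1),
\]
where $\mathrm{Ax}_d(g)$ is a $g$-invariant quasi-geodesic and the $O(1)$ depends only on the strong hyperbolicity and rough geodesity constants of $d$.

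For each conjugacy class $[g_0]$ with $\ell := \ell_d([g_0])$, I would fix a cyclically reduced representative $g_0$, so that $\mathrm{Ax}_d(g_0)$ passes within $O(1)$ of $o$. Every $g \in [g_0]$ has the form $g = h g_0 h^{-1}$ with $hC(g_0) \in \G/C(g_0)$, and the displacement formula gives $d(h^{-1}o, \mathrm{Ax}_d(g_0)) = \tfrac{1}{2}(d(o,g) - \ell) + O(1)$. The standing orbital bound $\#\{h \in \G : d(o,h) < R\} \le C_1 e^{v_d R}$, combined with the fact that the (virtually cyclic) centralizer acts on the quasi-axis by near-translation of period comparable to $\ell$ (so that a fundamental domain has axial length $\lesssim \ell$), yields the tube estimate
\[
\#\bigl\{g \in [g_0] : d(o,g) < T,\; d(o,g) - \ell_d(g) > T^{1/3}\bigr\} \;\le\; C_2\, \ell\, e^{v_d(T-\ell)/2}\, \mathbf{1}_{\{\ell \le T - T^{1/3} + O(1)\}}.
\]
Summing over conjugacy classes using the elementary upper bound $\#\{[g_0] : \ell_d([g_0]) \le L\} \le C_3 e^{v_d L}$ (which is immediate from the existence of cyclically reduced representatives together with the orbital count), an Abel summation/integration argument bounds the total number of exceptional elements by $C_4\, e^{v_d T - v_d T^{1/3}/2}$. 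Dividing by $\#\{g : d(o,g) < T\} \asymp e^{v_d T}$ gives a ratio of order $e^{-v_d T^{1/3}/2} \to 0$ as $T \to \infty$, as required.

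The step I expect to be the main obstacle is the rigorous derivation of the displacement formula above in the strongly hyperbolic (rather than $\mathrm{CAT}(-1)$ or Riemannian) setting, and verifying that all $O(1)$ constants are genuinely uniform in $g$. This requires careful book-keeping with the defining inequality of strongly hyperbolic metrics together with the existence of quasi-axes for hyperbolic elements. By contrast, the conjugate counting and the conjugacy-class upper bound are comparatively soft: the former is a volume comparison for the tubular neighborhood of a quasi-axis, and the latter is immediate from the orbital count already recalled in Section~2.
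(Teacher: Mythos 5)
Your argument is correct in its essentials, but it is substantially longer and organized differently from the paper's. The paper also passes to a conjugate $g'$ with $|d(o,g') - \ell_d(g')| \le C$ (via \cite[Lemma~3.2]{cantrell.tanaka.1}), observes that $d(o,g') \le T - T^{1/3}$, and then asserts that the map $g \mapsto g'$ is at most $K(d(o,g)+1)$-to-one, concluding $\#\mathcal{U}(T) = O\bigl(T e^{v_d(T-T^{1/3})}\bigr)$. Your proof replaces that single multiplicity assertion by the explicit conjugacy-class decomposition, the displacement formula $d(o,g) = \ell_d(g) + 2\,d(o,\mathrm{Ax}_d(g)) + O(1)$, the tube estimate $\lesssim \ell\, e^{v_d(T-\ell)/2}$ for the number of conjugates of a fixed class in $B(o,T)$, and the bound $\#\{[g_0]:\ell_d[g_0]\le L\}\lesssim e^{v_d L}$. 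Summing these gives the sharper count $O\bigl(T\,e^{v_d T - v_d T^{1/3}/2}\bigr)$. Both approaches land on a ratio that vanishes as $T\to\infty$; your route makes the multiplicity transparent where the paper's is stated tersely (indeed, your tube estimate shows the fiber over a fixed cyclically reduced $g'$ with $\ell_d[g']=\ell$ has size $\asymp e^{v_d(T-\ell)/2}$ rather than $O(T)$, so the exponent $v_d(T-T^{1/3})$ claimed in the paper is in fact too aggressive; the correct exponent is $v_d T - v_d T^{1/3}/2$, which still suffices).

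Two smaller remarks. First, the bound $\ell_d(g) \le d(o,g)$ is exact (subadditivity of $n\mapsto d(o,g^n)$ alone gives it); you do not need the $O(1)$ or rough geodesity for that direction. Second, the displacement formula and the fact that the centralizer is virtually cyclic acting with period $\asymp\ell$ on the quasi-axis are facts about roughly geodesic $\delta$-hyperbolic metrics; strong hyperbolicity plays no essential role there, so the obstacle you flag is a matter of standard bookkeeping rather than a conceptual hurdle. A genuine point that neither your argument nor the paper's addresses explicitly is the contribution of finite-order elements (for which $\ell_d(g)=0$ and there is no quasi-axis); one either needs the standard fact that in a non-elementary hyperbolic group torsion elements grow at a strictly slower exponential rate, or a direct count using that there are finitely many torsion conjugacy classes whose centralizers are infinite quasi-convex subgroups of strictly smaller growth.
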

\begin{proof}
Consider the set of elements
\[
\mathcal{U}(T) = \{g\in\G: d(o,g) < T, \text{ and } |d(o,g) - \ell_d(g)| > T^{1/3} \}.
\]
Now there exists $C>0$ such that given $x \in \mathcal{U}(T)$ we can find, by \cite[Lemma 3.2]{cantrell.tanaka.1}, $g' \in [g]$ such that $|d(o,g') - \ell_d(g')| = |d(o,g') - \ell_d(g)| \le C$. Then we have that $d(o,g') \le \ell_d(g) + C \le T - T^{1/3}$. Hyperbolicity guarantees that the association $g \mapsto g'$ is at most $K (d(o,g) + 1) $ to $1$ for some $K > 0$ independent of $g$ (i.e it is at most $K(T+1)$ to $1$ for $g\in \mathcal{U}(T)$). Hence we have that
\[
\#\mathcal{U}(T) \le K \, (T+1)\#\{g' \in \G: d(o,g') < T- T^{1/3} \} = O\left( T e^{v_d(T-T^{1/3})}\right)
\]
as $T\to\infty$  and the result follows.
\end{proof}
As a corollary of this lemma and Theorem \ref{thm.main} we deduce the following result. Recall that, given $d_\ast \in \Dc_\G$ and $g \in \G$, $\ell_{d_\ast}[g]$ is stable translation length of (the conjugacy class containing) $g$ with respect to $d_\ast$.
\begin{corollary}
Suppose that $\G$ is a non-elementary hyperbolic group and take two strongly hyperbolic metrics $d,d_\ast$ on $\G$. Suppose either that $\G$ is not virtually free or that $\G$ is virtually free and that $d$ has non-arithmetic length spectrum. Then there exist $\tau = \tau(d_\ast/d) >0$ and $\sigma^2 = \sigma^2(d_\ast/d) \ge 0$ such that
\[
\frac{1}{\#\{ g \in \G: d(o,g) < T\}} \#\left\{ g \in \G: d(o,g) < T \ \text{ and } \ \frac{\ell_{d_\ast}[g] - \tau T}{\sqrt{T}} \le t  \right\} \to \mathcal{N}_\sigma(t)
\]
as $T\to\infty$. 
Furthermore, $\sigma^2(d_\ast/d) = 0$ if and only if $d$ and $d_\ast$ are roughly similar.
\end{corollary}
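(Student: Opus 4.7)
The plan is to deduce the corollary from Theorem \ref{thm.main} by showing that $d_\ast(o,g)$ and $\ell_{d_\ast}[g]$ differ by $o(\sqrt{T})$ on all but a vanishing fraction of $g$ satisfying $d(o,g)<T$; substituting $\ell_{d_\ast}[g]$ for $d_\ast(o,g)$ in the conclusion of Theorem \ref{thm.main} then yields the stated CLT, and the non-degeneracy dichotomy follows immediately from the one in Theorem \ref{thm.main}.

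The key geometric reduction is as follows. Applying the preceding lemma to $d$ gives $|d(o,g)-\ell_d[g]|\le T^{1/3}$ for all $g$ in a subset $G(T)\subset\{g\in\G:d(o,g)<T\}$ of full asymptotic density. For strongly hyperbolic metrics one has the standard identity $d(o,g)=2\,\mathrm{dist}_d(o,A_d(g))+\ell_d[g]+O(1)$, where $A_d(g)$ denotes a $d$-quasi-axis of the hyperbolic element $g$, so $\mathrm{dist}_d(o,A_d(g))=O(T^{1/3})$ for $g\in G(T)$. Since $d$ and $d_\ast$ both lie in $\Dc_\G$, they are quasi-isometric, and the quasi-axes $A_d(g)$ and $A_{d_\ast}(g)$ lie at finite Hausdorff distance uniformly in $g$. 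The Lipschitz side of the quasi-isometry then yields $\mathrm{dist}_{d_\ast}(o,A_{d_\ast}(g))=O(T^{1/3})$, and the analogous axis formula for $d_\ast$ gives $|d_\ast(o,g)-\ell_{d_\ast}[g]|=O(T^{1/3})=o(\sqrt{T})$, uniformly in $g\in G(T)$.

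With this in hand, fix $\varepsilon>0$ and $t\in\R$. For all sufficiently large $T$, every $g\in G(T)$ satisfies $|(d_\ast(o,g)-\tau T)/\sqrt{T}-(\ell_{d_\ast}[g]-\tau T)/\sqrt{T}|\le\varepsilon$. Combining this sandwich bound with Theorem \ref{thm.main} and the fact that $\{g\in\G:d(o,g)<T\}\setminus G(T)$ has vanishing density in $\{g\in\G:d(o,g)<T\}$ yields
\[
\mathcal{N}_\sigma(t-\varepsilon)\le \liminf_{T\to\infty} C(T,t)\le \limsup_{T\to\infty} C(T,t)\le \mathcal{N}_\sigma(t+\varepsilon),
\]
where $C(T,t)=\#\{g\in\G:d(o,g)<T,\,\ell_{d_\ast}[g]\le \tau T+t\sqrt{T}\}/\#\{g\in\G:d(o,g)<T\}$. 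Letting $\varepsilon\to 0$ and invoking continuity of $\mathcal{N}_\sigma$ at $t$ gives the CLT.

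The main obstacle is the geometric step: establishing $\mathrm{dist}_{d_\ast}(o,A_{d_\ast}(g))\le L\,\mathrm{dist}_d(o,A_d(g))+O(1)$ with the constants $L$ and the additive error independent of $g$. This combines the strongly hyperbolic axis formula with the quasi-isometry invariance of quasi-axes of infinite-order elements in hyperbolic groups; both inputs are standard, but one must ensure all additive constants remain uniform across conjugacy classes so that the final bound is $O(T^{1/3})=o(\sqrt{T})$, which is the scale needed to conclude the central limit theorem.
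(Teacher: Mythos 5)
Your proof is correct and follows the paper's blueprint: use the preceding lemma to pass from $d_\ast(o,g)$ to $\ell_{d_\ast}[g]$ up to an $o(\sqrt{T})$ error on a full-density subset, then invoke Theorem \ref{thm.main} and a standard sandwich argument. The paper is terse here — it simply asserts that the corollary follows from the lemma and Theorem \ref{thm.main} — and the lemma as stated controls $|d(o,g)-\ell_d[g]|$ for a single metric, so some transfer from the gap in $d$ to the gap in $d_\ast$ is genuinely required. (One cannot simply re-run the lemma with $d_\ast$ and convert the counting sets by quasi-isometry, since the resulting exponential bound $e^{v_{d_\ast}(LT+C)}$ need not be dominated by $e^{v_d T}$.) Your axis argument — $d(o,g)\approx 2\,\mathrm{dist}_d(o,A_d(g))+\ell_d[g]$ together with the quasi-isometry invariance of axes, giving a uniform comparison between $d(o,g)-\ell_d[g]$ and $d_\ast(o,g)-\ell_{d_\ast}[g]$ — is exactly the bridge needed, and you are right that the uniformity of all additive constants across conjugacy classes is the point requiring care; it comes from the uniform positive lower bound on $\ell_d[g]$ over infinite-order $g$ in a hyperbolic group, which makes the quasi-axes uniform quasi-geodesics.

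One minor omission: the axis formula only applies to infinite-order $g$, so you should also discard the torsion elements. This is harmless, since a torsion element has $\ell_d[g]=0$ and so lies in $G(T)$ only if $d(o,g)\le T^{1/3}$, and $\#\{g: d(o,g)\le T^{1/3}\}=O(e^{v_d T^{1/3}})$ is of vanishing density. Your concluding sandwich argument (using continuity of $\mathcal{N}_\sigma$) and the transfer of the non-degeneracy dichotomy from Theorem \ref{thm.main} are both fine.
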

Given this result it is natural to ask whether we can also replace $d$ with $\ell_d$ and count over conjugacy classes opposed to group elements. We are unsure how to approach this problem and so end the paper with the following question.
\begin{question}
Let $d,d_\ast$ be strongly hyperbolic metrics on a non-elementary hyperbolic group $\G$. Does a central limit theorem hold that compares $\ell_d$ and $\ell_{d_\ast}$ on conjugacy classes. That is, does
\[
\frac{1}{\#\{ [g] \in \conj(\G): \ell_d[g] < T\}} \#\left\{ [g] \in \conj(\G): \ell_d[g] < T \ \text{ and } \ \frac{\ell_{d_\ast}[g] - \tau T}{\sqrt{T}} \le t  \right\} 
\]
converge to a normal distribution as $T\to\infty$?
\end{question}
\subsection*{Open access statement}
For the purpose of open access, the authors have applied a Creative Commons Attribution (CC BY) licence to any Author Accepted Manuscript version arising from this submission.

\bibliographystyle{alpha}
\bibliography{coding}

\end{document}